\numberwithin{equation}{theorem}
\newcommand{\ol}{\overline}
\DeclareMathOperator{\length}{length}
\newcommand{\fg}{\pi_1^{\textnormal{\'{e}t}}} 
\DeclareMathOperator{\ns}{ns}
\newcommand{\pSpec}{\text{Spec}^{\circ}} 
\newcommand{\etInCdOne}{\etale{} in codimension $1$}
\theoremstyle{theorem}
\renewcommand{\O}{\mathscr O}
\begin{document}
\title{Fundamental groups of $F$-regular singularities via $F$-signature}
\author{Javier Carvajal-Rojas}
\author{Karl Schwede}
\author{Kevin Tucker}
\address{Department of Mathematics\\ University of Utah\\ Salt Lake City\\ UT 84112}
\email{carvajal@math.utah.edu}
\address{Department of Mathematics\\ University of Utah\\ Salt Lake City\\ UT 84112}
\email{schwede@math.utah.edu}
\address{Department of Mathematics\\ University of Illinois at Chicago\\Chicago\\  IL 60607}
\email{kftucker@uic.edu}

\thanks{The first named author was supported in part by the NSF FRG Grant DMS \#1265261/1501115}
\thanks{The second named author was supported in part by the NSF FRG Grant DMS \#1265261/1501115 and NSF CAREER Grant DMS \#1252860/1501102}
\thanks{The third named author was supported in party by NSF Grant DMS \#1419448}


\maketitle

\begin{abstract}
We prove that the local \etale{} fundamental group of a strongly $F$-regular singularity is finite (and likewise for the \'etale fundamental group of the complement of a codimension $\geq 2$ set), analogous to results of Xu and Greb-Kebekus-Peternell for KLT singularities in characteristic zero.  In fact our result is effective, we show that the reciprocal of the $F$-signature of the singularity gives a bound on the size of this fundamental group.  To prove these results and their corollaries, we develop new transformation rules for the $F$-signature under finite \etale-in-codimension-one extensions.  As another consequence of these transformation rules, we also obtain purity of the branch locus over rings with mild singularities (particularly if the $F$-signature is $> 1/2$).  Finally, we generalize our $F$-signature transformation rules to the context of pairs and not-necessarily \etale-in-codimension-one extensions, obtaining an analog of another result of Xu.
\end{abstract}

\section{Introduction}

In \cite[Question 26]{KollarNewExamplesOTandLC} J.~Koll\'ar asked whether if $(0 \in X)$ is the germ of a KLT singularity, then $\pi_1(X \setminus \{ 0\})$ is finite.  In \cite{XuFinitenessOfFundGroups} C.~Xu showed that this holds for the \etale{} local fundamental group, in other words, for the profinite completion of $\pi_1(X \setminus \{0\})$.  Building on this result, \cite{GrebKebekusPeternellEtaleFundamental} proved the finiteness of the \etale{} fundamental groups of the regular locus of KLT singularities (see also \cite{TianXuFinitenessOfFundamentalGroups}).  Over the past few decades, we have learned that KLT singularities are closely related to strongly $F$-regular singularities in characteristic $p > 0$, see \cite{HaraWatanabeFRegFPure,HaraRatImpliesFRat}.  Hence it is natural to ask whether their local \etale{} fundamental groups are also finite.  We show that this is indeed the case.  In fact, we find an upper bound for the size of the fundamental group in terms of a well studied invariant for measuring singularities in characteristic $p > 0$, the $F$-signature $s(R)$.
\begin{theoremA*}[\autoref{thm.FinitenessFundGroup}]
\label{theoremA}
Let $(R,\mathfrak{m}, k)$ be a normal $F$-finite and strongly $F$-regular strictly Henselian\footnote{This just means it is Henselian with separably closed residue field.} local domain of prime characteristic $p>0$, with dimension $d\geq 2$. Then the \'{e}tale fundamental group of the punctured spectrum of $R$, i.e. $\pi_1:=\fg\bigl(\Spec^{\circ}(R)\bigr)$, is finite. Furthermore, the order of $\pi_1$ is at most $1/s(R)$ and is prime to $p$.  The same also holds for $\fg\bigl(\Spec(R) \setminus Z\bigr)$ where $Z \subseteq \Spec R$ has codimension $\geq 2$.
\end{theoremA*}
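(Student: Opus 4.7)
The plan is to bound every finite continuous quotient of $\pi_1 = \fg\bigl(\pSpec(R)\bigr)$ by $1/s(R)$ using the $F$-signature transformation rule for finite \'etale-in-codimension-one extensions, then deduce finiteness of $\pi_1$ itself from this uniform bound, and separately rule out $p$ as a divisor of $|\pi_1|$.

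First convert the problem into one about ring extensions via Galois theory. Any finite continuous quotient $\pi_1 \twoheadrightarrow G$ corresponds to a connected Galois \'etale cover $U \to \pSpec(R)$ with group $G$. Let $S$ be the normalization of $R$ inside $\Gamma(U, \O_U)$. Then $R \subseteq S$ is a finite extension of normal local domains (local because $R$ is Henselian and $S$ is a finite domain extension), Galois with group $G$, of generic rank $|G|$, and $\Spec S \to \Spec R$ is \'etale in codimension one (since the complement of $\pSpec(R)$ in $\Spec R$ has codimension $\geq 2$ by normality). Because $R$ is strongly $F$-regular it is in particular a splinter, so $R \hookrightarrow S$ splits as $R$-modules; combined with \'etaleness in codimension one, this should force $S$ to also be a strongly $F$-regular strictly Henselian local domain, so $s(S) \in (0, 1]$. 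Now apply the $F$-signature transformation rule advertised in the introduction: for such an extension it should take the form
\[
s(S) \;=\; [S:R] \cdot s(R) \;=\; |G| \cdot s(R),
\]
which combined with $s(S) \leq 1$ yields the uniform bound $|G| \leq 1/s(R)$ on every finite continuous quotient of $\pi_1$.

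To upgrade a uniform bound on finite quotients to finiteness of the profinite group itself, pick an open normal subgroup $N_0 \trianglelefteq \pi_1$ with $|\pi_1/N_0|$ maximal (which exists by the uniform bound). For any other open normal $N \trianglelefteq \pi_1$, the quotient $\pi_1/(N \cap N_0)$ surjects onto $\pi_1/N_0$ and itself has order at most $1/s(R) = |\pi_1/N_0|$, so $N \cap N_0 = N_0$, i.e. $N_0 \subseteq N$. Intersecting over all open normal $N$ yields $N_0 = \{1\}$, so $\pi_1$ itself is finite of order at most $1/s(R)$.

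For the prime-to-$p$ assertion, if $p \mid |G|$ then picking a Sylow $p$-subgroup $P \leq G$ and a normal subgroup $P' \trianglelefteq P$ of index $p$ produces inside the tower $R \subseteq S^P \subseteq S^{P'} \subseteq S$ an \'etale-in-codimension-one Galois $\mathbb{Z}/p$-cover $S^P \subseteq S^{P'}$ (with $S^P$ again strongly $F$-regular, inheriting the property from $S$ through the \'etale-in-codimension-one quotient by $P$). Strong $F$-regularity (in particular $F$-purity) of the base $S^P$ should be incompatible with a nontrivial such Artin--Schreier cover, giving the desired contradiction. Finally, for $\Spec(R) \setminus Z$ with $\codim Z \geq 2$, normality of $R$ ensures that any finite \'etale cover of the complement extends uniquely, by taking normalizations, to a finite \'etale-in-codimension-one cover of $\Spec R$, so the same transformation-rule argument applies verbatim. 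The main obstacle will be establishing the precise transformation identity $s(S) = [S:R] \cdot s(R)$ for \'etale-in-codimension-one finite extensions of strongly $F$-regular local rings — the central new technical tool the paper promises — together with the Artin--Schreier step used for the prime-to-$p$ assertion; once these are in hand the finiteness of $\pi_1$, the effective bound $|\pi_1| \leq 1/s(R)$, and the extension to $\Spec(R) \setminus Z$ all follow formally.
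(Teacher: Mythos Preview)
Your proposal is correct and follows the same overall strategy as the paper: translate covers of $U$ into finite local \'etale-in-codimension-one extensions $(R,\fram,k)\subseteq(S,\fran,\ell)$, invoke the transformation rule $s(S)=\tfrac{[L:K]}{[\ell:k]}\,s(R)$ together with $s(S)\leq 1$ to bound all generic ranks by $1/s(R)$, and conclude. Two differences are worth flagging. First, your deduction of finiteness via a maximal-index open normal subgroup is a clean profinite-group alternative to the paper's argument, which instead observes that along any tower of such extensions the $F$-signature at least \emph{doubles} at every non-\'etale step (their Corollary~3.2), so the tower stabilizes; both routes produce the same maximal Galois cover $S^\star$ with $\pi_1\cong\mathrm{Gal}(L^\star/K)$. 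Second, your prime-to-$p$ step is left vague and is more circuitous than necessary: the Sylow detour through $S^P\subseteq S^{P'}$ is not needed, and the actual obstruction is not an ``Artin--Schreier'' phenomenon per se but the elementary trace argument the paper records separately --- strong $F$-regularity plus \'etale in codimension one force $\Tr:S\to R$ to be surjective, one always has $\Tr(\fran)\subseteq\fram$, and $\Tr(1)=[L:K]\cdot 1=0$ whenever $p\mid[L:K]$, which is a contradiction applied directly to $R\subseteq S^\star$. Finally, you should make explicit why $[\ell:k]=1$ in the transformation rule: the paper proves that such extensions have \emph{separable} residue-field extension, which combined with $k=k^{\mathrm{sep}}$ gives $\ell=k$; without this the formula only yields $s(S)=\tfrac{[L:K]}{[\ell:k]}\,s(R)$.
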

Observe, that unlike the characteristic zero situation, our characteristic $p > 0$ result is effective, we give an explicit bound on the size of the $\pi_1$.
It is also worth noting that we are working with the \etale{} fundamental group, not the \emph{tame} fundamental group.  Indeed, for $R$ strongly $F$-regular, any finite \etInCdOne{} local extension $(R, \fram, k) \subseteq (S, \fran, \ell)$ must be tame everywhere.  This was already implicitly observed in \cite{SchwedeTuckerTestIdealFiniteMaps} but we make it precise here.  Indeed, we note that $p$ cannot divide $[K(S) : K(R)]$ if the residue fields are equal \autoref{cor.PurityOfBranchLocusForCovers}.

The technical tool where $F$-regularity is used in our proof is a transformation rule for $F$-signature under finite \etale{}-in-codimension-1-morphisms.  The $F$-signature was introduced implicitly in \cite{SmithVanDenBerghSimplicityOfDiff} and explicitly in \cite{HunekeLeuschkeTwoTheoremsAboutMaximal}.  Roughly speaking, it measures how many different ways $R \hookrightarrow F^e_* R$ splits as $e$ goes to infinity.  Explicitly, if $R$ has perfect residue field and $F^e_* R = R^{\oplus a_e} \oplus M$ as an $R$-module, where $M$ has no free $R$-summands, then ${\displaystyle s(R) = {\lim\limits_{e \rightarrow \infty}} {a_e \over p^{e \dim R}}}$.  Here are three quick facts:
\begin{itemize}
\item{}  The limit $s(R)$ exists \cite{TuckerFSigExists}.
\item{}  $s(R) > 0$ if and only if $R$ is strongly $F$-regular \cite{AberbachLeuschke}.
\item{}  $s(R) \leq 1$.
\end{itemize}
Note that there have been a number of transformation rules for $F$-signature under finite maps in the past.  However, they were generally only an inequality (that went the wrong way for our purposes), or assumed that $S$ is flat over $R$ (or made other assumptions about $R$ and $S$).  See for instance  \cite{HunekeLeuschkeTwoTheoremsAboutMaximal,YaoObservationsFSignature,HochsterYaoRationalSignature,TuckerFSigExists}.

\begin{theoremB*}
[\autoref{thm.formula_signature}]
Let $(R,\mathfrak{m}, k) \subseteq (S, \mathfrak{n}, \ell)$ be a module-finite local extension of $F$-finite $d$-dimensional normal local domains in characteristic $p>0$, with corresponding extension of fraction fields $K \subseteq L$.
Suppose $R \subseteq S$ is \etInCdOne, and that $R$ is strongly $F$-regular. Then 
if one writes $S=R^{\oplus f} \oplus M$ as a decomposition of $R$-modules so that $M$ has no nonzero free direct summands, then $f =[\ell: k] \geq 1$ and the following equality holds:
\[
s(S)= \frac{[L:K]}{[\ell:k]} \cdot s(R).
\]
\end{theoremB*}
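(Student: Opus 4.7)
The plan is to prove an identity for all finitely generated $S$-modules which, when applied to $F^e_*S$, collapses to the desired formula. Because $R\subseteq S$ is étale in codimension one, $\Hom_R(S,R)$ is a reflexive rank-one $S$-module whose associated Weil divisor (the inverse different) vanishes in codimension one; since $\Spec S$ is normal this forces the divisor to be trivial, so $\Hom_R(S,R)\cong S$ as $S$-modules. Fix a generator $\Tr\colon S\to R$. Strong $F$-regularity of $R$ forces $R\subseteq S$ to split as $R$-modules, providing some $s_0\in S$ with $\Tr(s_0)=1$; in particular $f\geq 1$ and $\Tr$ is surjective.

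The technical heart is the assertion $\Tr(\fran)\subseteq\fram$. Granting this, I would prove the following key claim: for every finitely generated $S$-module $N$, the number of free $R$-summands of $N$ equals $[\ell:k]$ times the number of free $S$-summands. The proof uses the adjunction
\[
\Hom_R(N,R)\cong\Hom_S\bigl(N,\Hom_R(S,R)\bigr)\cong\Hom_S(N,S),
\]
which sends an $R$-linear $\phi$ to the unique $S$-linear $\tilde\phi$ with $\phi=\Tr\circ\tilde\phi$. Under this identification, the submodules of maps whose image lies in $\fram$ (resp.\ $\fran$) coincide: one inclusion uses $\Tr(\fran)\subseteq\fram$ directly, while the other uses that $\tilde\phi(N)\not\subseteq\fran$ forces $\tilde\phi(N)=S$, whence $\phi(N)=\Tr(S)=R\not\subseteq\fram$. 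Comparing $k$- and $\ell$-dimensions of the common quotient produces the factor $[\ell:k]$. Applied to $N=S$ this recovers $f=[\ell:k]$; applied to $N=F^e_*S$ it says the number of free $R$-summands of $F^e_*S$ equals $[\ell:k]\cdot a_e(S)$.

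The signature formula follows by computing the $R$-free rank of $F^e_*S$ a second way. Choose $[L:K]$ elements of $S$ that are linearly independent over $K$, giving a short exact sequence
\[
0\to R^{\oplus[L:K]}\to S\to T\to 0
\]
of $R$-modules with $T$ torsion. Applying $F^e_*$ and invoking a standard estimate (the number of free $R$-summands changes by at most $\mu_R$ of the cokernel) together with the bound $\mu_R(F^e_*T)=O(p^{e(d-1)})$ --- which follows from $\dim\Supp T\leq d-1$ --- one concludes that the number of free $R$-summands of $F^e_*S$ equals $[L:K]\cdot a_e(R)+o(\rank_R F^e_*R)$. Equating with $[\ell:k]\cdot a_e(S)$, dividing by $\rank_R F^e_*R$, and letting $e\to\infty$ yields $[\ell:k]\cdot s(S)=[L:K]\cdot s(R)$, the desired equality.

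The principal obstacle I expect is verifying $\Tr(\fran)\subseteq\fram$. Since ramification is permitted at the closed point (codimension $\geq 2$), this is not formal from the étale-in-codimension-one hypothesis and must genuinely use that $R$ is strongly $F$-regular. My approach would be to analyze the induced $k$-linear map $\overline\Tr\colon S/\fram S\to k$ and show that it factors through the natural surjection $S/\fram S\twoheadrightarrow\ell$, perhaps via a careful normalization of the generator $\Tr$ of $\Hom_R(S,R)$ as an $S$-module.
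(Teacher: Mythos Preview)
Your overall architecture is exactly the paper's: use that $\Hom_R(S,R)=\Tr\cdot S$ (from \'etale in codimension one), that $\Tr$ is surjective (from strong $F$-regularity, via the splinter property), and that $\Tr(\fran)\subseteq\fram$ to conclude that an $S$-module with no free $S$-summand has no free $R$-summand; then combine $b_e=[\ell:k]\cdot a_e(S)$ with the asymptotic $b_e\sim [L:K]\cdot a_e(R)$ coming from an embedding $R^{\oplus[L:K]}\hookrightarrow S$ with torsion cokernel. Your general module statement is a clean repackaging of what the paper does in two separate steps (once for $S$, once for $N_e$).

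Your one real misstep is the diagnosis of $\Tr(\fran)\subseteq\fram$. This is \emph{not} where strong $F$-regularity enters; it holds for \emph{any} finite local extension of normal local domains. The paper proves it (Lemma~2.8) by choosing a divisorial discrete valuation ring $A$ of $K(R)$ centered on $\fram$, with uniformizer $a$, letting $B$ be the integral closure of $A$ in $K(S)$, and checking on this one-dimensional model that $\Tr\bigl(\bigcap_i\frab_i\bigr)\subseteq aA$; since $\fran\subseteq\bigcap_i\frab_i$ and $aA\cap R=\fram$, the result follows. The computation reduces to the inequality $n-1\leq m$ relating the ramification index $n$ to the exponent $m$ of the different in a DVR extension, which is standard (with equality in the tame case). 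Your proposed route---showing $\overline\Tr\colon S/\fram S\to k$ factors through $\ell$---is just a restatement of the conclusion, not a strategy, and invoking strong $F$-regularity here would be misplaced: that hypothesis is spent entirely on making $\Tr$ surjective.
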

\noindent
Below, before Theorem D, we discuss how to still get precise transformation rules of $F$-signature even when $R \subseteq S$ is not necessarily \etale{} in codimension 1.

By applying Theorem B in the case $k=\ell$, we see that $s(S) = [L : K] \cdot s(R)$.  Since $s(S) \leq 1$, we immediately see that $[L : K] \leq {1 / s(R)}$.  In other words, the reciprocal of the $F$-signature $s(R)$ gives an upper bound on the generic rank of a finite local \etInCdOne{} extension with the same residue field.  Theorem A then follows.  We also obtain characteristic $p > 0$ corollaries similar to some of those in \cite{GrebKebekusPeternellEtaleFundamental}.

Because our bound on the size of the \etale fundamental group is effective, we immediately obtain a new result on purity of the branch locus.
\begin{theoremC*}[\autoref{cor.PurityOfBranchLocus}]
Suppose $Y \to X$ is a finite dominant map of $F$-finite normal integral schemes.  If $s(\O_{X,P}) > 1/2$ for all $P \in X$ then the branch locus of $Y \to X$ has no irreducible components of codimension $\geq 2$, in other words it is a divisor.
\end{theoremC*}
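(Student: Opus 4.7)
I would argue by contradiction, reducing purity of the branch locus to the triviality of the local \'etale fundamental group afforded by Theorem A. Suppose the branch locus of $Y \to X$ contains an irreducible component $Z$ of codimension $\geq 2$, and let $P \in X$ be its generic point. Since the claim is local on $X$, I replace $X$ by $\Spec R$ with $R := \O_{X,P}$, and let $S := \Gamma\bigl(Y \times_X \Spec R, \O\bigr)$, a module-finite normal semilocal $R$-algebra. Because $s(R) > 1/2$, the ring $R$ is $F$-finite, normal, and strongly $F$-regular with $d := \dim R \geq 2$. By the choice of $P$ as the generic point of a codimension $\geq 2$ component of the branch locus, the finite morphism $\Spec S \to \Spec R$ is \'etale at every point of $\Spec R$ except possibly at $\fram$; equivalently, $R \subseteq S$ is \'etale in codimension $1$.

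Next I would pass to the strict Henselization $R^{sh}$, which remains an $F$-finite, normal, strongly $F$-regular, strictly Henselian local domain with $s(R^{sh}) = s(R) > 1/2$ and $\dim R^{sh} = d \geq 2$. Using the excellence of $F$-finite rings, $S \otimes_R R^{sh}$ is normal, and since $R^{sh}$ is strictly Henselian it decomposes as a finite product $S \otimes_R R^{sh} = \prod_i T_i$ with each $T_i$ a normal local domain, module-finite over $R^{sh}$ and \'etale in codimension $1$ over it. Theorem A then yields
\[
\bigl|\fg\bigl(\Spec^{\circ}(R^{sh})\bigr)\bigr| \;\leq\; \frac{1}{s(R^{sh})} \;<\; 2,
\]
so this group is trivial and every connected finite \'etale cover of $\Spec^{\circ}(R^{sh})$ is an isomorphism. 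For each $i$, the induced cover $\Spec^{\circ}(T_i) \to \Spec^{\circ}(R^{sh})$ has connected source (as $T_i$ is a normal local domain of dimension $\geq 2$), hence is an isomorphism. Since both rings are normal and now have the same fraction field, this forces $T_i = R^{sh}$.

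Consequently $S \otimes_R R^{sh} \cong (R^{sh})^{\oplus n}$ is \'etale over $R^{sh}$, and \'etaleness descends along the faithfully flat map $R \to R^{sh}$, so $R \to S$ is \'etale at $\fram$. This contradicts the assumption that $P$ belongs to the branch locus, completing the argument. The main technical obstacle is the reduction to the strict Henselization: one must verify that the $F$-signature is preserved under strict Henselization so that Theorem A applies with the same bound, and that $S \otimes_R R^{sh}$ genuinely splits as a product of normal local domains. Both facts are standard for $F$-finite (hence excellent) rings, after which the rest of the proof is the purely formal consequence $1/s(R) < 2$ of the hypothesis.
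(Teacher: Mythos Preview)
Your argument is correct, but it takes a different and somewhat more circuitous route than the paper's. The paper localizes at a minimal prime $P$ of the branch locus of height $\geq 2$, \emph{completes} rather than strictly Henselizes, and then applies the $F$-signature transformation rule directly via \autoref{cor.F_Signature_goes_up_under_ramification}: one of the local factors $(\widehat{R_P},\fram,k)\subseteq (S_i,\fran_i,\ell_i)$ is \etInCdOne{} but not \etale, so $s(S_i)\geq 2\,s(\widehat{R_P})>1$, a contradiction. No appeal to the fundamental group is made.

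By contrast, you pass to the strict Henselization so that Theorem~A applies, deduce that $\fg(\Spec^{\circ} R^{sh})$ is trivial because $1/s(R^{sh})<2$, and then conclude each factor $T_i$ equals $R^{sh}$, forcing \etale{}ness by faithfully flat descent. This is logically sound (Theorem~A does not depend on Theorem~C), but it is a detour: Theorem~A is itself proved from the same $F$-signature doubling inequality you are implicitly invoking, so your argument routes through an unnecessary intermediate statement. The paper's use of completion instead of strict Henselization is also slightly cleaner, since $s(\widehat{R_P})=s(R_P)$ is immediate and one need not worry about the residue field extension to $k^{\mathrm{sep}}$ (though, as you note, $F$-signature is indeed preserved under strict Henselization for $F$-finite rings). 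Both proofs ultimately rest on the same numerical inequality $s(S)\leq 1$ combined with the transformation rule of Theorem~B.
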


In \cite{BlickleSchwedeTuckerFSigPairs1}, the notion of $F$-signature of pairs was introduced.  In \autoref{thm.formula_signature_w/Deltas}, we obtain an analogous result to Theorem B in the context of pairs.  Indeed, if $(R, \Delta)$ is a strongly $F$-regular pair, then this can be interpreted as follows.  The reciprocal of $s(R, \Delta)$ gives an upper bound on the generic rank of a finite local extension $(R, \fram) \subseteq (S, \fran)$ such that $\pi^* \Delta - \Ram \geq 0$ (here $\Ram$ is the ramification divisor on $\Spec S$ and $\pi : \Spec S \to \Spec R$ is the induced map).  By taking cones, this immediately yields the following characteristic $p > 0$ analog of the second main result of \cite{XuFinitenessOfFundGroups}.  Here note that globally $F$-regular varieties are an analog of log-Fano varieties in characteristic zero \cite{SchwedeSmithLogFanoVsGloballyFRegular}.

\begin{theoremD*}
[\autoref{cor.CoverOfGloballyFRegPair}]
Suppose that $(X, \Delta)$ is a globally $F$-regular projective pair over an algebraically closed field of characteristic $p > 0$.  There is a number $n$ such that every finite separable cover $\pi : Y \to X$ with $\pi^* \Delta - \Ram \geq 0$ has generic rank $[K(Y) : K(X)] \leq n$.
\end{theoremD*}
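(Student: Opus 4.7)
The plan is to reduce Theorem D to the local pair-theoretic transformation rule \autoref{thm.formula_signature_w/Deltas} through the standard affine cone construction, mirroring how \cite{XuFinitenessOfFundGroups} passes between local and global statements. First I would fix a sufficiently ample and divisible line bundle $\sL$ on $X$ and form the section ring
\[
R \;:=\; \bigoplus_{m \geq 0} H^0(X, m\sL),
\]
with irrelevant maximal ideal $\fram$. By the correspondence of globally $F$-regular projective pairs with strongly $F$-regular section rings established in \cite{SchwedeSmithLogFanoVsGloballyFRegular}, the $\bQ$-divisor $\Delta$ on $X$ lifts to a $\bQ$-divisor $\Delta_R$ on $\Spec R$ so that $(R_\fram, \Delta_R)$ is a strongly $F$-regular pair (after passing to the Henselization at $\fram$, if needed, to match the local hypotheses of \autoref{thm.formula_signature_w/Deltas}; since the base field is algebraically closed, the residue field is already separably closed). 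Then $s(R_\fram, \Delta_R)$ is a fixed positive real number depending only on $(X, \Delta)$ and the chosen $\sL$, and I would take $n := \lfloor 1/s(R_\fram, \Delta_R) \rfloor$.

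Next, given any finite separable cover $\pi : Y \to X$ with $\pi^*\Delta - \Ram \geq 0$, I would produce the matching local extension by letting $S$ be the integral closure of $R$ in the function field $K(Y)$. Since $Y$ is normal and $\pi$ is finite, $S$ is module-finite over $R$ and coincides with $\bigoplus_{m \geq 0} H^0(Y, m\pi^*\sL)$; localizing at the cone points yields a module-finite local extension $(R_\fram, \fram) \subseteq (S_\fran, \fran)$ of normal local domains of the same dimension, with $[K(S):K(R)] = [K(Y):K(X)]$. The essential compatibility to check is that the cone construction transports divisors correctly: the pullback of $\Delta_R$ to $\Spec S$ equals the cone over $\pi^*\Delta$, and the ramification divisor of $\Spec S \to \Spec R$ equals the cone over $\Ram$. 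This reduces to a standard divisor computation, using that the complement of the vertex in $\Spec R$ is an $\bG_m$-bundle over $X$. Granting these identifications, the assumed inequality $\pi^*\Delta - \Ram \geq 0$ translates verbatim into the hypothesis required by \autoref{thm.formula_signature_w/Deltas}, yielding
\[
[K(Y):K(X)] \;=\; [K(S):K(R)] \;\leq\; \frac{1}{s(R_\fram,\Delta_R)},
\]
so the fixed $n$ above works uniformly in $Y$.

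The main obstacle I expect is the careful bookkeeping in the cone comparison: verifying that $\Delta_R$ and the ramification divisor of $\Spec S \to \Spec R$ behave exactly as expected under the cone construction, with no anomalous contribution concentrated at the vertex and no rounding issues when $\Delta$ is strictly a $\bQ$-divisor (one may have to pass to a sufficiently divisible multiple of $\sL$ to clear denominators). A secondary technicality is the need to Henselize or complete at $\fram$ to match the precise local setting of \autoref{thm.formula_signature_w/Deltas}; this is harmless, since neither the generic rank $[K(S):K(R)]$ nor the $F$-signature changes under these standard local manipulations.
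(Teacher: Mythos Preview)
Your proposal is correct and follows essentially the same route as the paper: form the section ring $R$ with respect to an ample line bundle, pass to the section ring $S = \bigoplus_{m \geq 0} H^0(Y, m\pi^*\sL)$ on the cover, verify that ranks and the divisors $\Delta$, $\Ram$ correspond under the cone construction, then Henselize/complete at the irrelevant ideal and invoke the local transformation rule (the paper routes this through \autoref{cor.BoundOnDegreeKeepDivisorEffective}, which is just the immediate consequence of \autoref{thm.formula_signature_w/Deltas} you are using directly). Your anticipated obstacles are overcautious: any ample $\sL$ works without divisibility adjustments, and there are no rounding issues since the pair $F$-signature is defined for arbitrary effective $\bQ$-divisors.
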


\vskip 9pt
\noindent
{\it Acknowledgements:}  The authors would like to thank J\'anos Koll\'ar, Christian Liedtke, Linquan Ma, Lance Miller, Mircea \mustata{}, Stefan Patrikis and David Speyer for valuable and inspiring conversations.  We would like to thank Stefan Kebekus Chenyang Xu and Lance Miller for valuable comments on a previous draft.  We would especially like to thank David Speyer for sharing an early preprint of \cite{SpeyerUnnamed} with us and allowing us to include \autoref{lem.Speyer} which is a special case of his result.

\section{Preliminaries}

\begin{convention}
Throughout this paper, all rings will be assumed to be Noetherian.  They will all be characteristic $p > 0$ unless otherwise stated and they will all be $F$-finite.  All schemes will be assumed to be Noetherian and separated.  If $R$ is an integral domain, then $K(R)$ will denote the fraction field of $R$ (likewise with $K(X)$ if $X$ is an integral scheme).  Given a finite separable map of normal integral schemes $f : Y \to X$,  we use $\Ram$ to denote the ramification divisor on $Y$.
\end{convention}

\subsection{Maps and divisors}

First we fix some notation.  Given a Weil divisor $D$ on $X = \Spec R$, we use $R(D)$ to denote $\Gamma\bigl(X, \O_X(D)\bigr)$.

\begin{definition-proposition}[Maps and Divisors]
Suppose that $R \subseteq S$ is a finite inclusion of normal domains and $A, B$ are divisors on $\Spec R$ and $\Spec S$ respectively with $\pi : \Spec S \to \Spec R$ the canonical map.  Then any nonzero element $\varphi \in \Hom_R\bigl(S(B), R(A)\bigr)$ yields an effective divisor $D_{\varphi} \sim (K_S - B) - \pi^* (K_R - A)$.  If $\varphi,\varphi' \in \Hom_R\bigl(S(B), R(A)\bigr)$ are such that $D_{\varphi} = D_{\varphi'}$ then $\varphi$ and $\varphi'$ are $S$-unit multiples of each other.
\end{definition-proposition}
\begin{proof}
We first notice that
\begin{align*}
\Hom_R\bigl(S(B), R(A)\bigr) \cong & \Hom_R\bigl(S\bigl(B + \pi^* (K_R - A)\bigr), R(K_R)\bigr) \\
 \cong &\Hom_S\bigl(S\bigl(B + \pi^*(K_R - A)\bigr), S(K_S)\bigr) \cong  S\bigl(K_S - B - \pi^*(K_R -A)\bigr).
\end{align*}
Since $\varphi$ is a section of that reflexive sheaf, it yields a divisor $D_{\varphi}$ of zeroes linearly equivalent to $K_S - B - \pi^*(K_R -A)$ as claimed.  Two sections yield the same divisor if and only if they are $S$-unit multiples of each other.
\end{proof}

\begin{lemma}
\label{lem.CompositionOfMapsAndDivisors}
Suppose we have a finite inclusion of normal domains $R \subseteq S \subseteq T$ and we have divisors $A,B,C$ on $\Spec R, \Spec S, \Spec T$ respectively and maps
\[
\beta : T(C) \to S(B), \gamma : S(B) \to R(A).
\]
Then $D_{\gamma \circ \beta} = D_{\beta} + \pi^* D_{\gamma}$ where $\pi : \Spec T \to \Spec R$ is the induced map.
\end{lemma}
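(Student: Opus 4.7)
The plan is to check the identity coefficient-by-coefficient at each codimension-$1$ point $Q$ of $\Spec T$, since both sides are effective Weil divisors on $\Spec T$. Let $P = Q \cap S$ and $\frp = Q \cap R$. Because $R \subseteq S \subseteq T$ are finite extensions of normal domains of equal dimension, $P$ and $\frp$ are likewise codimension-$1$ points, so $R_\frp$, $S_P$, $T_Q$ are DVRs. After localizing at these primes, the reflexive rank-$1$ modules $R(A)$, $S(B)$, $T(C)$ become free, and $\beta$, $\gamma$, $\gamma\circ\beta$ become explicit $R$-linear (resp.\ $S$-linear) maps between fractional ideals.

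In this DVR setting, the isomorphism from the preceding definition-proposition identifies $\Hom_R(S(B), R(A))$ with a rank-$1$ $S$-module generated by a fixed ``dualizing'' element---the trace $\Tr_{S/R}$ in the generically separable case, or a chosen generator of $\omega_{S/R}$ otherwise---so one can write $\gamma(y) = \Tr_{S/R}(g_\gamma \cdot y)$ for some $g_\gamma \in K(S)$, and similarly $\beta(z) = \Tr_{T/S}(g_\beta \cdot z)$ for some $g_\beta \in K(T)$. The key observation is then the multiplicativity $g_{\gamma\circ\beta} = g_\gamma g_\beta$ in $K(T)$: because $g_\gamma$ lies in $K(S)$, the $S$-linearity of $\Tr_{T/S}$ pulls it inside the trace, and the transitivity $\Tr_{S/R} \circ \Tr_{T/S} = \Tr_{T/R}$ yields $(\gamma \circ \beta)(z) = \Tr_{T/R}(g_\gamma g_\beta \cdot z)$.

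Finally, translating the elements $g_\varphi$ back into divisors, $\ord_Q D_\varphi$ equals the valuation of $g_\varphi$ plus the local coefficient of the appropriate relative canonical term $K_{\bullet/\bullet} - (\text{domain twist}) + \pi^*(\text{codomain twist})$. Combining the classical transitivity of the different $K_{T/R} = K_{T/S} + \pi^* K_{S/R}$ with the ramification identity $v_Q(g) = e(Q/P)\cdot v_P(g)$ for $g \in K(S) \subseteq K(T)$, the three valuation expressions collapse into
\[
\ord_Q D_{\gamma\circ\beta} \;=\; \ord_Q D_\beta \;+\; e(Q/P)\cdot \ord_P D_\gamma \;=\; \ord_Q\bigl(D_\beta + \pi^* D_\gamma\bigr),
\]
which is the desired identity. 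The main obstacle is the careful bookkeeping of ramification and twists through the three simultaneous Hom/dualizing-module identifications; a potential subtlety is that generic inseparability would kill the trace, but this is sidestepped by working with a generator of $\omega_{S/R}$ in place of $\Tr$, as the composition argument depends only on multiplicativity and transitivity, both of which survive this replacement.
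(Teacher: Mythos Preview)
Your argument is correct and follows the same strategy as the paper: localize at codimension-one primes, express each map as multiplication by an element followed by a generating homomorphism, and use that the composition of generators is again a generator. The paper streamlines the bookkeeping by first using the PID structure to absorb the twists into isomorphisms $R(A)\cong R$, $S(B)\cong S$, $T(C)\cong T$ (reducing to $A=B=C=0$) and by working with abstract $S$- and $T$-module generators $\Phi,\Psi$ of the Hom modules rather than trace; this makes $D_\Phi=D_\Psi=D_{\Phi\circ\Psi}=0$, so the identity collapses immediately to $\Div_T(st)=\Div_T(t)+\Div_T(s)$ with no need to invoke the different or ramification indices. One small correction: trace generates $\Hom_R(S,R)$ only when $R\subseteq S$ is \'etale in codimension one, not merely generically separable, so your $\omega_{S/R}$-generator fallback should be the default choice throughout rather than reserved for the inseparable case---though since you track the relative canonical terms anyway, this does not affect the validity of your computation.
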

\begin{proof}
We work locally and assume that $R$ is a DVR and that $S$ and $T$ are semi-local Dedekind domains and hence are PIDs.  Since $R,S,T$ are all PIDs, $T(C) \cong T, S(B) \cong B, R(A) \cong R$.  By applying these isomorphisms uniformly, we may assume that $C = 0, B = 0, A = 0$.  Let $\Phi \in \Hom_R(S, R)$ be an $S$-module generator, $\Psi \in \Hom_S(T,S)$ be a $T$-module generator, and observe that $\Phi \circ \Psi \in \Hom_R(T, R)$ is a $T$-module generator by, for instance \cite[Lemma 3.9]{SchwedeFAdjunction}.  Write $\gamma(\blank) = \Phi(s \cdot \blank)$, $\beta(\blank) = \Psi(t \cdot \blank)$.  This implies that $D_{\gamma} = \Div_S(s)$ and that $D_{\beta} = \Div_T(t)$.  We observe that $\gamma \circ \beta(\blank) = \Phi \circ \Psi(st \cdot \blank)$ and so \[
D_{\gamma \circ \beta} = \Div_T(st) = \Div_T(t) + \Div_T(s) = D_{\beta} + \pi^* D_{\gamma}
\]
as desired.
\end{proof}

\subsection{$F$-signature}
\label{subsec.FSig}

\begin{definition}[$F$-signature, \cite{HunekeLeuschkeTwoTheoremsAboutMaximal} \cite{BlickleSchwedeTuckerFSigPairs1}]
Suppose that $(R, \fram, k)$ is a $d$-dimensinal $F$-finite local ring with $\alpha = \alpha(R)$ such that $p^{\alpha} = [k : k^p]$.  If we write $F^e_* R = R^{\oplus a_e} \oplus M$ where $M$ has no free $R$-summands, in other words $a_e$ is the maximal rank of a free $R$-summand of $F^e_* R$, then the \emph{$F$-signature of $R$}, is equal to
\[
s(R) := \lim_{e \rightarrow \infty} {a_e \over p^{e(d+\alpha)}}.
\]
More generally, if $\Delta \geq 0$ is a $\bQ$-divisor on $\Spec R$ and if we let $a_e^\Delta$ denote the maximal rank of a free $R$-summand of $F^e_* R$ whose corresponding projection maps lie in $\Hom_R\bigl(F^e_* R(\lceil (p^e - 1)\Delta\rceil), R\bigr) \subseteq \Hom_R(F^e_* R, R)$, then the \emph{$F$-signature of the pair $(R, \Delta)$} is equal to
\[
s(R, \Delta) := \lim_{e \rightarrow \infty} {a_e^\Delta \over p^{e(d+\alpha)}}.
\]
\end{definition}

The elements of $\Hom_R\bigl(F^e_* R(\lceil (p^e - 1)\Delta\rceil), R\bigr) \subseteq \Hom_R(F^e_* R, R)$ form what is called a Cartier algebra since they are closed under composition, \cite{SchwedeTestIdealsInNonQGor,BlickleTestIdealsViaAlgebras}.  Indeed in \cite{BlickleSchwedeTuckerFSigPairs1}, $F$-signature with respect to general Cartier algebras is defined and studied.  Below, we discuss $F$-signature with respect to an object that is not quite a Cartier algebra (but which will otherwise be convenient for us).

Consider $(R, \fram, k)$ an $F$-finite normal local domain with full Cartier algebra $\sC = \sC_{R}$.  If $\Delta \geq 0$ is a $\bQ$-divisor, we can form the Cartier subalgebra $\sC^{\Delta} \subseteq \sC$, with $\sC^{\Delta} = \bigoplus_{e \geq 0} \sC_e^{\Delta} = \bigoplus_{e \geq 0} \Hom_R\bigl(F^e_* R\bigl(\lceil (p^e - 1)\Delta \rceil\bigr), R\bigr)$.  However, it is frequently natural to instead consider $\sG = \sG^{\lfloor p^{\bullet} \Delta \rfloor} =  \bigoplus_{e \geq 0} \sG_e = \bigoplus_{e \geq 0} \Hom_R\bigl(F^e_* R(\lfloor p^e\Delta \rfloor), R\bigr)$.  This object is not generally a Cartier algebra since given two maps
\[
\varphi \in  \Hom_R\bigl(F^e_* R(\lfloor p^e\Delta \rfloor), R\bigr) = \sG_e, \text{ and } \psi \in \Hom_R\bigl(F^f_* R(\lfloor p^f\Delta \rfloor), R\bigr) = \sG_f
\]
we can compose and obtain
\[
\varphi \circ F^e_* \psi : F^{e+f} R\bigl(\lfloor p^f \Delta \rfloor + p^f \lfloor p^e \Delta \rfloor\bigr) \to F^e_* R(\lfloor p^e\Delta \rfloor) \to R.
\]
However, $\lfloor p^f \Delta \rfloor + p^f \lfloor p^e \Delta \rfloor$ is not always $\geq \lfloor p^{e+f} \Delta \rfloor$ and so $\sG$ is not closed under composition.

Observe that if $\Delta \geq 0$ satisfies $\lfloor \Delta \rfloor = 0$, then we have
\begin{equation}
\label{eq.RoundingComparison}
\begin{array}{c}
\lfloor p^e \Delta \rfloor  \leq \lceil (p^e -1) \Delta \rceil \text{ so that}\\
\sG_e = \Hom_R\bigl(F^e_* R(\lfloor p^e\Delta \rfloor), R\bigr) \supseteq \Hom_R\bigl(F^e_* R(\lceil (p^e - 1)\Delta \rceil), R\bigr) = \sC^{\Delta}_e.
\end{array}
\end{equation}
furthermore, we have equality if $(p^e - 1)\Delta$ is an integral Weil divisor.

\begin{setting}
\label{set.SettingForRoundings}
Suppose $(R, \fram)$ is an $F$-finite normal local domain of dimension $d$ and $\Delta \geq 0$ is a $\bQ$-divisor on $\Spec R$ such that $\lfloor \Delta \rfloor = 0$.  We define $\sG_e$ as above.
\end{setting}

\begin{definition}
\label{def.NSandIe}
With notation as in \autoref{set.SettingForRoundings}, we set
\[
\begin{array}{rl}
\sG^{\ns}_e = & \{ \varphi \in \sG_e \;|\; \Image(\varphi) \subseteq \fram \}\\
I_e^{\sG} = & \{ r \in R \;|\; \varphi(F^e_* r) \in \fram, \text{ for all } \varphi \in \sG_e \}.
\end{array}
\]
We also define $a_e^{\sG}$ to be the maximal number of free summands of $F^e_* R$ whose associated projection homomorphisms belong to $\sG_e$.
\end{definition}
We observe that
\begin{equation}
\label{eq.NumberOfSummandsInDifferentGuises}
a_e^{\sG} = \lambda_R\bigl(\sG_e/\sG^{\ns}_e\bigr) = p^{e \alpha} \lambda_R\bigl( R/I_e^{\sG}\bigr)
\end{equation}
by \cite[Lemma 3.6]{BlickleSchwedeTuckerFSigPairs1}, where $\alpha=\alpha(R)=\log_p \bigl([k^{1/p}:k]\bigr)$.

\begin{lemma}\textnormal{[\cf \cite[Lemma 4.17]{BlickleSchwedeTuckerFSigPairs1}]}
\label{lem.FSigLimitWithGs}
With notation as in \autoref{set.SettingForRoundings}
\[
\lim_{e \rightarrow \infty} { a_e^{\sG} \over p^{e (d+\alpha)}} = s(R, \Delta).
\]
Moreover, the same result holds even if we replace $\sG_e$ by the asymptotically small perturbation of it:
\[
\sG'_e:= \Hom_R\bigl(F^e_* R\bigl(\lfloor p^e\Delta \rfloor + D \bigr), R\bigr)
\]
for any other effective Weil divisor $D$.
\end{lemma}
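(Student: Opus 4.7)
The plan is to sandwich $\sG_e$ (and, for the moreover, $\sG'_e$) between honest Cartier subalgebras of the form $\sC_e^{\Delta^\pm}$ corresponding to small rational perturbations $\Delta^\pm$ of $\Delta$, and then let the perturbation shrink to zero via continuity of $s(R,\cdot)$ along a fixed direction.

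First I would pin down the discrepancy between $\lfloor p^e \Delta\rfloor$ and $\lceil(p^e-1)\Delta\rceil$. Write $\Delta = \sum_i a_i D_i$ with $0 < a_i < 1$ and set $D' := \lceil \Delta\rceil = \sum_i D_i$. The elementary bounds $p^e a_i - 1 < \lfloor p^e a_i\rfloor \leq p^e a_i$ and $(p^e - 1)a_i \leq \lceil(p^e - 1)a_i\rceil < (p^e - 1)a_i + 1$ combine to give $0 \leq \lceil(p^e - 1)a_i\rceil - \lfloor p^e a_i\rfloor \leq 1$, whence the sheaf inclusions
\[
F^e_* R\bigl(\lfloor p^e \Delta\rfloor\bigr) \subseteq F^e_* R\bigl(\lceil(p^e-1)\Delta\rceil\bigr) \subseteq F^e_* R\bigl(\lfloor p^e \Delta\rfloor + D'\bigr).
\]
Dualizing via $\Hom_R(-, R)$ reverses these, and in particular yields $\sC_e^\Delta \subseteq \sG_e$. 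Consequently $a_e^\Delta \leq a_e^\sG$, and so $s(R, \Delta) \leq \liminf_e a_e^\sG / p^{e(d+\alpha)}$.

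For the matching upper bound, fix a small rational $\epsilon > 0$ with $\Delta^- := \Delta - \epsilon D'$ still satisfying $\Delta^- \geq 0$ and $\lfloor\Delta^-\rfloor = 0$. A componentwise check shows $\lceil(p^e-1)\Delta^-\rceil \leq \lfloor p^e \Delta \rfloor$ as soon as $(p^e-1)\epsilon \geq 1$: the condition reduces coefficient-by-coefficient to $\{p^e a_i\} \leq a_i + (p^e-1)\epsilon$, which is then trivially true since the left side lies in $[0,1)$. Dualizing gives $\sG_e \subseteq \sC_e^{\Delta^-}$ and hence $a_e^\sG \leq a_e^{\Delta^-}$ for $e \gg 0$; thus $\limsup a_e^\sG / p^{e(d+\alpha)} \leq s(R, \Delta^-)$. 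Letting $\epsilon \to 0^+$ and invoking continuity of $s(R, \cdot)$ under rational perturbations along a fixed direction (a mild consequence of the machinery developed in \cite{BlickleSchwedeTuckerFSigPairs1}) finishes the first assertion.

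The moreover part follows by the same pincer. The upper bound $\limsup a_e^{\sG'} / p^{e(d+\alpha)} \leq s(R, \Delta)$ is immediate from the tautological inclusion $\sG'_e \subseteq \sG_e$ and the first part. For the lower bound, fix small rational $\epsilon > 0$ with $\Delta^+ := \Delta + \epsilon D \geq 0$ and $\lfloor \Delta^+\rfloor = 0$; a symmetric rounding computation, again using $(p^e - 1)\epsilon \geq 1$, produces $\lceil(p^e-1)\Delta^+\rceil \geq \lfloor p^e \Delta\rfloor + D$ for all $e$ large. Dualizing yields $\sG'_e \supseteq \sC_e^{\Delta^+}$, hence $\liminf a_e^{\sG'} / p^{e(d+\alpha)} \geq s(R, \Delta^+) \to s(R, \Delta)$ as $\epsilon \to 0^+$. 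The main friction is bookkeeping the $e$-dependence of the rounding inequalities so that continuity can be invoked cleanly, but because all perturbations are rational and along a single fixed divisor this is routine.
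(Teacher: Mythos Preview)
Your argument is correct, but it follows a genuinely different route from the paper's.  The paper does not perturb $\Delta$ at all and does not invoke continuity of $s(R,\cdot)$.  Instead it uses the ``$c$-trick'' encapsulated in \autoref{lem.CTrick}: one picks a single $0\neq c\in R$ with $\Div(c)\geq \lceil \Delta\rceil$ (respectively $\Div(c)\geq \Delta+D$ for the moreover) and observes the chain $(F^e_*c)\cdot \sG_e\subseteq \sC_e^{\Delta}\subseteq \sG_e$, which translates on the ideal side to $I_e^{\sG}\subseteq I_e^{\Delta}\subseteq (I_e^{\sG}:_R c)$.  Then \autoref{lem.CTrick} says the two sequences of colengths have the same normalized limit, and one is done.

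The trade-offs: the paper's proof is shorter and entirely self-contained within this section, needing only the elementary length estimate of \autoref{lem.CTrick}.  Your approach is perhaps more transparent conceptually (you literally squeeze $\sG_e$ between two honest Cartier subalgebras), but it imports continuity of $s(R,\cdot)$ from \cite{BlickleSchwedeTuckerFSigPairs1} as a black box, which is itself comparable in depth to what is being proved.  The $c$-trick also has the pleasant feature that a single $c$ works uniformly in $e$, whereas your rounding inequalities only hold for $e$ large depending on $\epsilon$, forcing the extra limit $\epsilon\to 0$.  One small remark: your stipulation $\lfloor \Delta^{+}\rfloor=0$ is unnecessary for the argument (neither $\sC_e^{\Delta^{+}}$ nor $s(R,\Delta^{+})$ requires it), and it could fail if $D$ has large coefficients unless $\epsilon$ is taken small relative to $D$; since you are sending $\epsilon\to 0$ anyway this is harmless.
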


\begin{proof}
Choose $0 \neq c \in R$ such that
\[
(F^e_* c) \cdot \sC^{\Delta}_e \subseteq (F^e_* c) \cdot \sG_e \subseteq \sC^{\Delta}_e \subseteq \sG_e.
\]
Indeed, any $c$ with $\Div(c) \geq \lceil \Delta \rceil$ will work.  Obviously we have that
\[
\bigr( I_e^{\sG} :_R c \bigl)= \{ r \in R \; |\; \varphi(F^e_* r) \in \fram, \text{ for all } \varphi \in (F^e_* c) \cdot \sG_e \}.
\]
Thus, the above inclusions $(F^e_* c) \cdot \sG_e \subseteq \sC^{\Delta}_e \subseteq \sG_e$ induce inclusions of ideals:
$$I_e^{\sG} \subseteq I_e^{\Delta} \subseteq \bigl(I_e^{\sG}:_R c\bigr) .$$
Now simply apply \autoref{lem.CTrick} below.

For the last assertion in the statement of the lemma, we just choose a $0 \neq c \in R$ such that $\Div(c) \geq \Delta + D$, then we are going to have $(F^e_* c) \cdot \sG_e \subseteq \sG'_e \subseteq \sG_e$, thus we argue as above, including running \autoref{lem.CTrick}.
\end{proof}

\begin{lemma}
\label{lem.CTrick}
Let $(R,\fram, k)$ be an $F$-finite normal local domain of dimension $d$. Suppose we have a pair of sequences of finite colength ideals $\{I_e\}_e$ and $\{J_e\}_e$ such that $\fram^{[p^e]} \subseteq I_e \subseteq J_e$ holds for all $e$. Additionally assume that there is a $0 \neq c \in R$ such that $I_e \subseteq J_e \subseteq (I_e:c)$ for all $e$. Then, it follows that:
\[
\lim_{e \rightarrow \infty}{\frac{1}{p^{de}}}\lambda_R(J_e/I_e) = 0.
\]
In particular,
\[
\lim_{e \rightarrow \infty}{\frac{1}{p^{de}}}\lambda_R(R/J_e) = \lim_{e \rightarrow \infty}{\frac{1}{p^{de}}}\lambda_R(R/I_e)
\]
provided that any (then both) of the limits exists.
\end{lemma}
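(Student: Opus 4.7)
The plan is to reduce the asymptotic vanishing of $\lambda_R(J_e/I_e)/p^{de}$ to a standard bound on the Hilbert--Kunz function of the ring $R/(c)$, whose dimension is strictly less than $d$.

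First I would observe that the hypothesis $J_e \subseteq (I_e:c)$ immediately gives $\lambda_R(J_e/I_e) \leq \lambda_R\bigl((I_e:c)/I_e\bigr)$, so it suffices to prove that the latter quantity is $o(p^{de})$. Since $c \neq 0$ in the domain $R$, it is a nonzerodivisor, and multiplication by $c$ produces a short exact sequence
\[
0 \to R/(I_e:c) \xrightarrow{\,\cdot c\,} R/I_e \to R/(I_e + cR) \to 0,
\]
yielding the length identity $\lambda_R\bigl((I_e:c)/I_e\bigr) = \lambda_R\bigl(R/(I_e + cR)\bigr)$.  The containment $\fram^{[p^e]} \subseteq I_e$ then gives $\lambda_R\bigl(R/(I_e + cR)\bigr) \leq \lambda_R\bigl(R/(\fram^{[p^e]} + cR)\bigr) = \lambda_{\bar R}\bigl(\bar R/\fram^{[p^e]} \bar R\bigr)$, where $\bar R := R/(c)$.

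Now $\bar R$ is a Noetherian local ring of dimension $d-1$, since $c$ is a nonzerodivisor in a $d$-dimensional Noetherian domain.  A standard Hilbert--Samuel comparison (using for instance that $\fram^{n(p^e-1)+1} \subseteq \fram^{[p^e]}$ whenever $\fram$ is generated by $n$ elements, or invoking the existence of the Hilbert--Kunz multiplicity of $\bar R$) then yields $\lambda_{\bar R}\bigl(\bar R/\fram^{[p^e]} \bar R\bigr) = O\bigl(p^{(d-1)e}\bigr)$, so dividing by $p^{de}$ forces the desired limit to be zero.  The ``in particular'' clause will follow at once from the additivity $\lambda_R(R/I_e) = \lambda_R(R/J_e) + \lambda_R(J_e/I_e)$, since the middle term contributes nothing to the normalized limit.

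I do not anticipate any serious obstacle here: the entire argument is short Hilbert--Kunz bookkeeping.  The only point demanding any care is confirming that $\bar R$ really has dimension $d-1$, which is automatic since $c$ is a nonzerodivisor in a Noetherian domain of dimension $d$.
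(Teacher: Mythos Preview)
Your argument is correct and essentially identical to the paper's: both bound $\lambda_R(J_e/I_e)$ by $\lambda_R\bigl((I_e:c)/I_e\bigr)$, identify this with $\lambda_R\bigl(R/(I_e+cR)\bigr)$ via multiplication by $c$ (the paper phrases this as a four-term exact sequence, you as a three-term one, but the content is the same), and then invoke $\fram^{[p^e]}\subseteq I_e$ together with $\dim R/(c)=d-1$ to get the $O(p^{(d-1)e})$ bound. You are simply a bit more explicit than the paper about the last step.
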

\begin{proof}
Consider the four term exact sequence
\[
0 \to (I_e:c)/I_e \to R/I_e \overset{\cdot c}{\longrightarrow} R/I_e \to R/(I_e+cR) \to 0.
\]
From this we conclude \cf \cite[Page 8]{VraciuNewVersion},
$$ \lambda_R(J_e/I_e) \leq \lambda_R\bigl((I_e:c)/I_e\bigr) = \lambda_R\bigl(R/(I_e+cR)\bigr) $$
Since $\fram^{[p^e]} \subseteq I_e$ there is a constant $C$ such that $\lambda_R\bigl((R/(I_e + cR)\bigr) \leq C p^{e(d-1)}$. The result then follows.
\end{proof}

We conclude this subsection with a brief recollection of strongly $F$-regular singularities and globally $F$-regular projective varieties.

\begin{definition}
Suppose that $(R, \fram)$ is a normal local ring and $\Delta \geq 0$ is a $\bQ$-divisor.  We say that $(R, \Delta)$ is \emph{strongly $F$-regular} if $s(R, \Delta) > 0$.  This is equivalent by \cite{AberbachLeuschke,BlickleSchwedeTuckerFSigPairs1} to the assertion that for every $0 \neq c \in R$, there exists some $e > 0$ and $\varphi \in \sC^{\Delta}_e$ with $\varphi(F^e_* c R) = R$.

Suppose $X$ is a normal projective variety over an algebraically closed field and $\Delta \geq 0$ is a $\bQ$-divisor on $X$.  For any ample line bundle $\sL$ on $X$, form the section ring $R = \bigoplus_{i \geq 0} H^0(X, \sL^i)$ and let $\Delta_R$ be the corresponding $\bQ$-divisor.  We say that $(X, \Delta)$ is \emph{globally $F$-regular} if $(R, \Delta_R)$ is strongly $F$-regular, this is independent of the choice of $\sL$ \cite{SchwedeSmithLogFanoVsGloballyFRegular}.
\end{definition}

\subsection{Notes on trace}

We believe the following easy lemma is well known to experts but we do not know an easy reference

\begin{lemma}
\label{lem.Note_Trace}
Suppose that $(R, \fram) \subseteq (S, \fran)$ is a finite extension of normal local rings. Then $\Tr(\fran) \subseteq \fram$.
\end{lemma}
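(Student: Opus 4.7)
The plan is to dispatch the inseparable case trivially and otherwise work in the integral closure of $R$ inside a Galois closure of $K(S)/K(R)$, reducing the statement to the observation that $\fran$ is contained in the Jacobson radical of that integral closure. If $L := K(S)$ is not separable over $K := K(R)$, then $\Tr_{L/K} \equiv 0$ on all of $L$ (a standard fact: the trace vanishes identically on any finite extension with nonzero inseparable degree), so $\Tr(\fran) = 0 \subseteq \fram$ and we are done.

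Assume henceforth that $L/K$ is separable and pick a Galois closure $L'/K$ with Galois group $G$; let $S'$ denote the integral closure of $R$ in $L'$, which is a module-finite, normal, semi-local $R$-algebra. Two standard consequences of integrality of $R \subseteq S'$ will be used: first, every maximal ideal of $S'$ contracts to $\fram$, from which one extracts $J(S') \cap R = \fram$ (the containment $\supseteq$ is immediate, and for $\subseteq$ any $a \in R \setminus \fram$ is a unit in $R$ and hence in $S'$); second, the natural action of $G$ on $S'$ by $R$-algebra automorphisms permutes the maximal ideals and hence preserves $J(S')$. The key intermediate observation is that $\fran \subseteq J(S')$: for any maximal ideal $\fran'$ of $S'$, the contraction $\fran' \cap S$ is a prime of $S$ lying over $\fram$, hence maximal by integrality of $R \subseteq S$, and so equals $\fran$ by locality of $S$; thus $\fran \subseteq \fran'$ for every such $\fran'$.

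To conclude, fix $x \in \fran \subseteq J(S')$ and expand the trace Galois-theoretically as $\Tr_{L/K}(x) = \sum_\sigma \sigma(x)$, with $\sigma$ running over the $K$-embeddings $L \hookrightarrow L'$, each of which is the restriction to $L$ of some element of $G$. Since $G$ preserves $J(S')$ and $x \in J(S')$, every $\sigma(x) \in J(S')$, so $\Tr_{L/K}(x) \in J(S')$; combined with $\Tr_{L/K}(x) \in R$ (from normality of $R$, since each $\sigma(x)$ is integral over $R$), we obtain $\Tr(x) \in J(S') \cap R = \fram$. The main conceptual point is the observation $\fran \subseteq J(S')$, which uses locality of $S$ to force every maximal ideal of $S'$ to contract to $\fran$ on $S$; with this in hand the rest is routine bookkeeping with integral extensions and the Galois description of trace.
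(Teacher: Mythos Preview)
Your proof is correct, and it takes a genuinely different route from the paper's argument. The paper chooses a divisorial discrete valuation ring $A$ centered over $\fram$, lets $B$ be its integral closure in $K(S)$, and then shows $\bigcap_i \frab_i \subseteq a \cdot B(K_{B/A})$ using the explicit form of the relative canonical divisor of a DVR extension; the conclusion follows from $\Tr\bigl(a \cdot B(K_{B/A})\bigr) \subseteq aA$ and $aA \cap R = \fram$. Your argument instead passes to the integral closure $S'$ of $R$ in a Galois closure and exploits the Galois description of trace together with the Jacobson radical of $S'$.

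Your approach is more elementary and self-contained: it avoids any appeal to ramification divisors or the identification of $\Tr$ with a generator of $\Hom_R(S,R)$, relying only on basic facts about integral extensions (lying over, preservation of the Jacobson radical under automorphisms) and the standard Galois formula for trace. It also cleanly isolates the inseparable case. The paper's approach, by contrast, is tailored to the divisorial machinery used throughout (maps and divisors, $\Ram$, and so on), and the estimate $\bigcap_i \frab_i \subseteq a \cdot B(K_{B/A})$ is the same kind of local computation that recurs later. So the paper's proof fits more naturally into its overall toolkit, while yours would work equally well in any context where only the bare statement is needed.
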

\begin{proof}
Choose $A$ a divisorial discrete valuation ring with uniformizer $a \in A$ of $K(R)$ centered over $V(\fram) \subseteq \Spec R$.  Let $B$ be the normalization of $A$ inside $K(S)$.  Note that $B$ is a 1-dimensional semi-local ring with maximal ideals $\frab_1, \ldots, \frab_l$.  Note that each $\frab_i \cap S = \fran$.  Let $K_{B/A}$ be the relative canonical divisor/ramification divisor of $A \subseteq B$ and observe that\footnote{This follows from the assertion that the section $\Tr \in \Hom_A(B,A)$ corresponds to the ramification divisor, see \cite{SchwedeTuckerTestIdealFiniteMaps}.} $\Tr B(K_{B/A}) \subseteq A$.  Now, notice that
\[
\Tr\bigl( a \cdot B(K_{B/A})\bigr) \subseteq \langle a \rangle
\]
by linearity of trace.  We claim that $\bigcap_i \frab_i \subseteq a \cdot B(K_{B/A})$.  We can check this locally on $B$.  Indeed, localize at a $\frab_i$ to obtain a DVR $B' = B_{\frab_i}$ with uniformizer $b$.  Then write $a = ub^n$ where $u$ is a unit of $B'$.  We also know that $K_{B'/A} = m\Div(b)$ where $n-1 \leq m$ (with equality in the case of tame ramification).  $B'(K_{B'/A}) = {1 \over b^m} B'$ and so \[
a \cdot B'(K_{B'/A}) = {a \over b^m} B' = {b^{n-m} B'} \supseteq b B'
\]
where the last containment holds since $n-m \leq 1$.  In conclusion
\[
\Tr\biggl(\bigcap_i \frab_i\biggr) \subseteq \Tr\bigl( a \cdot B(K_{B/A})\bigr) \subseteq \langle a \rangle.
\]

Next choose $x \in \fran$, then $x \in \bigcap_i \frab_i$ and so $\Tr(x) \in aA \cap R = \fram$ as claimed.
\end{proof}

The following corollary can also be viewed as a result on purity of the branch locus.  We also believe it is well known to experts but we do not know a reference.

\begin{corollary}
\label{cor.PurityOfBranchLocusForCovers}
Suppose that $(R,\fram)$ is a splinter (for instance, if it is strongly $F$-regular).  Then there is no finite local extension $(R,\fram) \subseteq (S,\fran)$ of normal domains with corresponding fraction fields $K \subseteq L$ such that
\begin{enumerate}
\item the residue fields $R/\fram = S/\fran = k$ are equal,
\label{cor.PurityOfBranchLocusForCovers.1}
\item $p \;\big|\; [L : K]$, and
\label{cor.PurityOfBranchLocusForCovers.2}
\item $R \subseteq S$ is \etInCdOne.
\label{cor.PurityOfBranchLocusForCovers.3}
\end{enumerate}
\end{corollary}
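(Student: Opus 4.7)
The plan is to derive a contradiction from the three hypotheses by combining the splinter property with the trace map and \autoref{lem.Note_Trace}.

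First, I would use hypothesis \autoref{cor.PurityOfBranchLocusForCovers.3} that $R \subseteq S$ is \etInCdOne{} to conclude that the ramification divisor $\Ram$ on $\Spec S$ is zero. By the preceding Definition-Proposition (applied with $A=0$, $B=0$), we have $\Hom_R(S,R) \cong S(K_S - \pi^*K_R)$ as $S$-modules. Since $\Ram = K_S - \pi^* K_R$ and $\Tr \in \Hom_R(S,R)$ corresponds to the ramification divisor (as used in the proof of \autoref{lem.Note_Trace}), the vanishing $\Ram = 0$ means that $\Tr$ is a generator of $\Hom_R(S,R)$ as an $S$-module.

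Second, I would invoke the splinter hypothesis: since $R$ is a splinter, the finite extension $R \subseteq S$ splits as $R$-modules, so there exists $\varphi \in \Hom_R(S,R)$ with $\varphi(1) = 1$. By the generation property of the previous step, write $\varphi(-) = \Tr(s \cdot -)$ for some $s \in S$. Evaluating at $1$ yields $\Tr(s) = 1$ in $R$.

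Third, I would exploit hypothesis \autoref{cor.PurityOfBranchLocusForCovers.1} (equal residue fields) to write $s = \tilde{r} + n$ with $\tilde{r} \in R$ and $n \in \fran$. By $R$-linearity of the trace,
\[
1 = \Tr(s) = \tilde{r} \cdot \Tr(1) + \Tr(n) = [L:K]\cdot \tilde{r} + \Tr(n).
\]
Now the first summand lies in $\fram$ (indeed, it vanishes in $R$) because $R$ has characteristic $p$ and hypothesis \autoref{cor.PurityOfBranchLocusForCovers.2} gives $p \mid [L:K]$, while the second summand lies in $\fram$ by \autoref{lem.Note_Trace}. Thus $1 \in \fram$, a contradiction. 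The only delicate point is the identification of $\Tr$ with a generator of $\Hom_R(S,R)$ under the \etInCdOne{} assumption; everything else is a direct combination of the splinter property, equal residue fields, and characteristic $p$.
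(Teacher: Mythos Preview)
Your proof is correct and follows essentially the same approach as the paper's. Both arguments hinge on the same three ingredients: the \etInCdOne{} hypothesis forces $\Tr$ to generate $\Hom_R(S,R)$ (so the splinter condition makes $\Tr$ surjective), \autoref{lem.Note_Trace} gives $\Tr(\fran)\subseteq\fram$, and then $\Tr(1)=[L:K]\equiv 0\pmod p$ yields the contradiction; the paper phrases the final step via the induced isomorphism $\overline{\Tr}\colon k\to k$ sending $\bar 1$ to $0$, while you unpack the same computation through the decomposition $s=\tilde r+n$.
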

\begin{proof}
Suppose that there is such an extension.
By \autoref{cor.PurityOfBranchLocusForCovers.3}, we know that $\Tr : S \to R$ is surjective, see \cite[Proposition 7.4]{SchwedeTuckerTestIdealFiniteMaps}.  By \autoref{lem.Note_Trace} $\Tr(\fran) \subseteq \fram$ and so we have an induced surjective map
\[
\overline{\Tr} : S/\fran \to R/\fram.
\]
But this is a $k$-linear map between $1$-dimensional $k$-vector spaces by \autoref{cor.PurityOfBranchLocusForCovers.1}, and hence an isomorphism.  Consider $1 \in R \subseteq S$.  We know $\Tr(1) = [L : K] \cdot 1 = 0$ by \autoref{cor.PurityOfBranchLocusForCovers.2}.  Hence $\overline{\Tr}(1) = 0$ which is a contradiction.
\end{proof}

Note in the above proof, if the condition that $R$ is a splinter is removed, then condition \eqref{cor.PurityOfBranchLocusForCovers.3} can be replaced by the condition that $\Tr : S \to R$ is surjective, and then the result is well known to experts (with the same proof).

\begin{definition}
Let $R\subseteq S$ be a finite separable extension of normal domains, we say that a direct $R$-summand $M$ of $S$ is a \emph{$\Tr$-summand} if $M\cong R$ and the associated projection linear map $\rho: S \to M \overset{\cong}{\longrightarrow} R$ is such that $D_{\rho}\geq \Ram$, in other words, $\rho=\Tr(s\cdot \blank)$ for some $s\in S$, equivalently $\rho \in \Tr\cdot S$ inside the $S$-module $\Hom_S(S,R)$. This is independent of the choice of the isomorphism $M \cong R$.
\end{definition}

We will use the next lemma frequently.
\begin{lemma}
\label{lem.TrSurjectsImpliesTrSummandsIslk}
Suppose $(R, \fram, k) \subseteq (S, \fran, \ell)$ is a separable finite extension of normal local domains.  If $\Tr : S \to R$ is surjective, then $[\ell : k]$ is equal to the number of simultaneous free $R$-summands of $S$ whose projection maps are multiples of $\Tr$, in other words the number of $\Tr$-summands.  In particular if $K = K(R)$, $L = K(S)$ and $[\ell : k] = [L : K]$, then $S$ is a free $R$-module.
\end{lemma}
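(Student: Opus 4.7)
The plan is to establish the count in two directions and then deduce the freeness. I would first observe that the surjective $R$-linear map $\Tr\colon S \to R$ descends: by \autoref{lem.Note_Trace} it takes $\fran$ into $\fram$, and $R$-linearity gives $\Tr(\fram S) \subseteq \fram$, so $\Tr$ induces a $k$-linear map $\widetilde{\Tr}\colon \ell \to k$, which is surjective because $\Tr$ is. Now take simultaneous $\Tr$-summands presented by $s_1,\dots,s_n \in S$ and projections $\rho_i = \Tr(t_i \cdot \blank)$ with $\Tr(t_i s_j) = \delta_{ij}$. Reducing modulo $\fran$ yields $\bar s_j, \bar t_i \in \ell$ with $\widetilde{\Tr}(\bar t_i \bar s_j) = \delta_{ij}$. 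If $\sum_j c_j \bar s_j = 0$ in $\ell$ for some $c_j \in k$, choose lifts $\tilde c_j \in R$; then $\sum_j \tilde c_j s_j \in \fran$, and applying $\rho_i$ gives $\tilde c_i = \Tr\bigl(t_i \sum_j \tilde c_j s_j\bigr) \in \Tr(\fran) \subseteq \fram$, so $c_i = 0$. Hence $\bar s_1, \dots, \bar s_n$ are $k$-linearly independent in $\ell$, forcing $n \leq [\ell : k]$.

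For the reverse inequality, I would use that $\Hom_k(\ell,k)$ is naturally an $\ell$-module under the action $(a \cdot \varphi)(b) := \varphi(ab)$, and has $\ell$-rank one since its $k$-dimension equals $[\ell:k]$. The nonzero element $\widetilde{\Tr}$ therefore generates this rank-one $\ell$-module, so the $k$-bilinear pairing $(a,b) \mapsto \widetilde{\Tr}(ab)$ on $\ell$ is non-degenerate. Picking any $k$-basis $\bar s_1,\dots,\bar s_n$ of $\ell$ with $n = [\ell:k]$, let $\bar t_1,\dots,\bar t_n$ be a dual basis for this pairing, and lift each $\bar s_j, \bar t_i$ arbitrarily to $s_j, t_i \in S$. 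The matrix $A := \bigl(\Tr(t_i s_j)\bigr)_{i,j} \in M_n(R)$ satisfies $A \equiv I_n \pmod{\fram}$, hence is invertible. Replacing $t_i$ by $t_i' := \sum_k (A^{-1})_{ik} t_k$ gives $\Tr(t_i' s_j) = \delta_{ij}$ exactly, and a direct check of the map $S \to R^n$, $s \mapsto \bigl(\Tr(t_i' s)\bigr)_i$, shows $S = R s_1 \oplus \cdots \oplus R s_n \oplus \bigcap_i \ker\bigl(\Tr(t_i' \cdot \blank)\bigr)$, producing the desired $[\ell : k]$ simultaneous $\Tr$-summands.

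For the final assertion, writing $S \cong R^{[\ell:k]} \oplus N$ via the simultaneous $\Tr$-summands above exhibits $N$ as a finitely generated torsion-free $R$-module of generic rank $[L:K] - [\ell:k] = 0$; since a torsion-free $R$-module embeds into its localization at $(0)$, this forces $N = 0$ and $S$ is $R$-free of rank $[L:K]$. The main technical point will be the non-degeneracy of the pairing on $\ell$, which rests on recognizing $\Hom_k(\ell,k)$ as a rank-one $\ell$-module so that any nonzero element is an $\ell$-generator, together with the Nakayama-style rectification turning a dual basis modulo $\fran$ into an exact one over $R$.
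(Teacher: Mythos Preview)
Your argument is correct. The paper takes a different, much shorter route: it invokes \cite[Lemma~3.6]{BlickleSchwedeTuckerFSigPairs1}, which says that for a submodule $D \subseteq \Hom_R(S,R)$, the maximal number of simultaneous free summands with projections in $D$ equals $\lambda_R\bigl(D / D^{\ns}\bigr)$ where $D^{\ns} = \{\varphi \in D : \varphi(S) \subseteq \fram\}$. Taking $D = \Tr \cdot S$, one has $D^{\ns} = \Tr \cdot \fran$ by \autoref{lem.Note_Trace}, and since $\Tr \cdot S \cong S$ as $S$-modules the quotient is $S/\fran = \ell$, giving the count $[\ell:k]$ in one stroke. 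The final freeness statement is then deduced exactly as you do.

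What you have done is essentially unpack the content of that cited lemma in this special case: your upper bound $n \leq [\ell:k]$ is the ``$D^{\ns}$ kills non-split maps'' direction, and your explicit lifting of a dual basis via the non-degenerate pairing $\widetilde{\Tr}$ on $\ell$ is the constructive direction. Your approach has the virtue of being entirely self-contained and makes transparent why the residue field extension controls the count, at the cost of some additional bookkeeping with the matrix $A$ and the Nakayama-style correction. The paper's approach is terser but relies on an external black box; yours would be preferable in an expository setting or if one wanted to avoid the dependency.
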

\begin{proof}
Consider $D = \Tr \cdot S \subseteq \Hom_R(S, R)$.  Notice that $\langle \varphi \in D\;|\; \varphi(S) \subseteq \fram \rangle = \Tr \cdot S$ by \autoref{lem.Note_Trace}.
By \cite[Lemma 3.6]{BlickleSchwedeTuckerFSigPairs1}, the number of free summands whose projection maps are multiples of $\Tr$ is equal to
\[
\length_R\left({\Tr \cdot S \over \Tr \cdot \fran}\right) = \length_R(\ell) = [\ell : k]
\]
which proves the first statement.  For the second, if we have equality, then we have a surjective map $S \to R^{\oplus [\ell : k]}$.  But $S$ is torsion free of generic $R$-rank $[L : K] = [\ell : k]$.
\end{proof}

Let us now briefly discuss tame ramification. In the case that $K(R) \subseteq K(S)$ is Galois, the surjectivity of the trace map $\Tr: S \to R$ has been coined \emph{cohomologically tamely ramified} by M. Kerz and A. Schmidt; see for example \cite[Claim 1, Theorem 6.2]{KerzSchmidtOnDifferentNotionsOfTameness}. They actually proved in that article that this is in fact the strongest among all the notion of tameness, including the one in \cite{GrothendieckMurreTameFundamentalGroup}.

What we want to observe next is that in the cases we consider in this paper, if $\Tr:S \to R$ is surjective, then the extension of residue fields $k \subseteq \ell$ is a separable extension.  This is implicit in the proof of \cite[Claim 1, Theorem 6.2]{KerzSchmidtOnDifferentNotionsOfTameness} but we give a careful proof in our setting.  First we recall a very special case of a result of David Speyer.  We thank David Speyer for sharing a preliminary draft of a paper with us.

\begin{lemma}[\cite{SpeyerUnnamed}]
\label{lem.Speyer}
Suppose that $(R, \fram) \subseteq (S, \fran)$ is a finite local extension of $F$-finite normal domains with $\Tr : S \to R$ surjective.  Further suppose that $\varphi_R : F^e_* R \to R$ is an $R$-linear map that extends to $\varphi_S : F^e_* S \to S$.  If $\fram$ is $\varphi_R$-compatible, then $\fran$ is $\varphi_S$-compatible.
\end{lemma}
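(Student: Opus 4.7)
The plan is to argue by contradiction: assume $\fran$ fails to be $\varphi_S$-compatible. Since $F^e_* \fran$ is closed under the source-twisted $S$-action on $F^e_* S$ (namely $s \cdot F^e_* x = F^e_*(s^{p^e} x)$) and $\varphi_S$ is $S$-linear, the image $\varphi_S(F^e_* \fran)$ is an ideal of $S$. By locality and our assumption, it must equal all of $S$, so the surjectivity of $\Tr$ gives $\Tr\bigl(\varphi_S(F^e_* \fran)\bigr) = R$.

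The crux is the identity of $R$-linear maps
\begin{equation*}
\Tr \circ \varphi_S \;=\; \varphi_R \circ F^e_* \Tr \colon F^e_* S \longrightarrow R. \tag{$\star$}
\end{equation*}
Granting $(\star)$ and using $\Tr(\fran) \subseteq \fram$ from \autoref{lem.Note_Trace}, the $\varphi_R$-compatibility of $\fram$ yields
\[
R \;=\; \Tr\bigl(\varphi_S(F^e_* \fran)\bigr) \;=\; \varphi_R\bigl(F^e_* \Tr(\fran)\bigr) \;\subseteq\; \varphi_R(F^e_* \fram) \;\subseteq\; \fram,
\]
a contradiction.

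To establish $(\star)$, both sides are $R$-linear maps from the torsion-free $R$-module $F^e_* S$ to the domain $R$, so the equality may be checked after tensoring with $K = K(R)$. Since $\Tr: S \to R$ is surjective, $L/K$ is finite separable, which supplies the canonical $L$-algebra isomorphism $F^e_* L \cong F^e_* K \otimes_K L$. Under this identification, any $L$-linear extension of $\varphi_R$ to $F^e_* L$ is forced to equal $\varphi_R \otimes \id_L$, pinning down the generic fiber of $\varphi_S$. A short computation on simple tensors $F^e_* a \otimes b$, using that $\Tr$ commutes with Frobenius (so $\Tr(b^{p^e}) = \Tr(b)^{p^e}$), shows that both sides of $(\star)$ send $F^e_* a \otimes b \mapsto \Tr(b) \cdot \varphi_R(F^e_* a)$.

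The main obstacle is $(\star)$: the hypothesis that $\varphi_S$ extends $\varphi_R$ is a priori a very weak constraint on $\varphi_S$, but the separability of $L/K$ (obtained from $\Tr$-surjectivity) rigidifies the generic fiber enough to reduce $(\star)$ to a one-line verification. Everything else is formal bookkeeping with trace surjectivity and compatibility of $\fram$.
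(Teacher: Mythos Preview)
Your proof is correct and follows the same route as the paper's: both hinge on the identity $\Tr \circ \varphi_S = \varphi_R \circ F^e_* \Tr$, then combine $\Tr(\fran)\subseteq\fram$ with $\varphi_R$-compatibility of $\fram$ and surjectivity of $\Tr$ to force $\varphi_S(F^e_*\fran)\subseteq\fran$. The only real difference is that the paper obtains $(\star)$ by invoking \cite[Proposition~4.1]{SchwedeTuckerTestIdealFiniteMaps}, whereas you give a self-contained verification by passing to the fraction fields, using separability of $L/K$ (forced by $\Tr$ being nonzero) to identify $F^e_*L\cong F^e_*K\otimes_K L$, and then checking on simple tensors via $\Tr(b^{p^e})=\Tr(b)^{p^e}$.
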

\begin{proof}
We give a short proof here for the convenience of the reader.
We have the commutative diagram \cite[Proposition 4.1]{SchwedeTuckerTestIdealFiniteMaps}
\[
\xymatrix{
F^e_* S \ar[d]_{F^e_* \Tr} \ar[r]^-{\varphi_S} & S \ar[d]^{\Tr} \\
F^e_* R \ar[r]_-{\varphi_R} & R.
}
\]
We know that $\varphi_R\bigl(F^e_* \Tr (\fran)\bigr) \subseteq \varphi_R(F^e_* \fram) \subseteq \fram$.  Hence, we also have that
\[
\Tr\bigl(\varphi_S (F^e_* \fran)\bigr) \subseteq \fram.
\]
Thus $\varphi_S (F^e_*  \fran) \subsetneq S$, since $\Tr$ is onto, and hence $\varphi_S(F^e_* \fran) \subseteq \fran$ as desired.
\end{proof}

\begin{lemma}
\label{lem.SepResFields}
Suppose $(R, \fram, k) \subseteq (S, \fran, \ell)$ is a finite local extension of $F$-finite normal domains that is \etale{} in codimension $1$ and such that $R$ is strongly $F$-regular.  Let $K = K(R)$ and $L = K(S)$.  In this case $k \subseteq \ell$ is separable and $[\ell : k] \;\big|\; [L : K]$.
\end{lemma}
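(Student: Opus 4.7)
My plan is to prove the separability of $k\subseteq\ell$ first, and then deduce the divisibility $[\ell:k]\mid[L:K]$ via a Hensel-type argument on completions. For separability, I would apply \autoref{lem.Speyer}. The \etInCdOne{} hypothesis ensures $\Tr:S\to R$ is surjective by \cite[Proposition 7.4]{SchwedeTuckerTestIdealFiniteMaps}, so \autoref{lem.Speyer} is in force for any $\varphi_R:F^e_*R\to R$ admitting a compatible extension $\varphi_S:F^e_*S\to S$. Using strong $F$-regularity of $R$, one produces a Frobenius splitting $\varphi_R$ (with $\varphi_R(F^e_*1)=1$) for which $\fram$ is $\varphi_R$-compatible, and this descends to a $k$-linear splitting $\bar\varphi_R:F^e_*k\to k$. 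The \etInCdOne{} hypothesis provides (via the transfer results for Cartier algebras in \cite{SchwedeTuckerTestIdealFiniteMaps}) an extension $\varphi_S$ satisfying the trace-diagram of \autoref{lem.Speyer}, so the lemma yields that $\fran$ is $\varphi_S$-compatible. The descended $\bar\varphi_S:F^e_*\ell\to\ell$ fits with $\bar\varphi_R$ in a commutative square, and this residue-field datum forces the natural map $\ell\otimes_k F^e_*k\to F^e_*\ell$ to be surjective, equivalently $\ell=\ell^{p^e}\cdot k$, which is the standard characterization of $\ell/k$ as separable.

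For divisibility $[\ell:k]\mid[L:K]$, I would pass to the completions $\hat R\subseteq\hat S$. Since $F$-finite rings are excellent, $\hat R$ and $\hat S$ are normal complete local rings, hence normal local domains, with residue fields $k$ and $\ell$. Flatness of completion gives $\hat S=S\otimes_R\hat R$, and hence $[\Frac\hat S:\Frac\hat R]=\rank_R S=[L:K]$. Using the separability just established, the primitive element theorem yields $\ell=k(\bar\alpha)$ with separable minimal polynomial $\bar f\in k[x]$ of degree $[\ell:k]$. Lift $\bar f$ to a monic $f\in\hat R[x]$ and $\bar\alpha$ to $\alpha_0\in\hat S$. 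Since $\bar f'(\bar\alpha)\neq 0$, the element $f'(\alpha_0)$ is a unit in $\hat S$, and Hensel's Lemma (applicable because $\hat S$ is complete local, hence Henselian) produces $\alpha\in\hat S$ with $f(\alpha)=0$ and $\alpha\equiv\bar\alpha$ modulo the maximal ideal. Setting $\hat T:=\hat R[x]/(f)$, the ring map $\hat T\to\hat S$ sending $x\mapsto\alpha$ is injective (tensoring with $\Frac\hat R$ yields a nonzero map of fields, which transfers back to injectivity of $\hat T\hookrightarrow\hat S$ by torsion-freeness of $\hat T$ over $\hat R$). Thus $\hat T$ embeds inside $\hat S$ as a finite \etale{} extension of $\hat R$ of rank $[\ell:k]$, and multiplicativity of fraction-field degrees yields $[\ell:k]=[\Frac\hat T:\Frac\hat R]$ divides $[\Frac\hat S:\Frac\hat R]=[L:K]$.

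I expect the separability step to be the main obstacle. Specifically, one must produce a Frobenius splitting $\varphi_R$ for which $\fram$ is compatible (requiring the full strength of strong $F$-regularity rather than merely $F$-purity), verify that the \etale-in-codim-$1$ transfer indeed produces $\varphi_S$ satisfying the trace-diagram of \autoref{lem.Speyer}, and translate the resulting residue-field compatibility into the precise surjectivity of $\ell\otimes_k F^e_*k\to F^e_*\ell$ characterizing $\ell/k$ as separable. Once separability is in hand, the divisibility argument proceeds routinely via completion and Hensel.
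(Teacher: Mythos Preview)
Your proposal is correct and follows essentially the same two-part strategy as the paper: descend an $\fram$-compatible Frobenius splitting to the residue fields via \autoref{lem.Speyer} and conclude separability, then pass to completions for the divisibility.  The execution differs in two minor ways.  First, the paper constructs the required $\varphi_R$ explicitly by premultiplying an arbitrary splitting $\psi$ by a suitable $z\in\fram^{j-1}$ (this uses only $F$-purity, not full strong $F$-regularity as you suspect), and then dispatches the step you flag as delicate---deducing separability from the residue-field square---by a direct citation of \cite[Proposition~5.2]{SchwedeTuckerTestIdealFiniteMaps}.  Second, for divisibility the paper invokes Cohen's coefficient-field theorem to embed $\widehat R\otimes_k\ell$ into $\widehat S$, whereas you lift a primitive element via Hensel; both routes produce the same intermediate extension of rank $[\ell:k]$, and yours is arguably more elementary (though note your assertion that $\widehat T\otimes_{\widehat R}\Frac\widehat R$ is a field tacitly uses Gauss's lemma over the normal domain $\widehat R$ to see that the lifted $f$ stays irreducible).
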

\begin{proof}
We begin with a claim.
\begin{claim}
There exists a surjective map $\varphi_R : F^e_* R \to R$ such that $\varphi_R (F^e_* \fram) \subseteq \fram$.
\end{claim}
\begin{proof}[Proof of claim]
Choose a surjective $\psi : F^e_* R \to R$ (which exists since $R$ is $F$-split). Then there exists a smallest $j > 0$ such that $\psi(F^e_* \fram^j) \subseteq \fram$. If $j = 1$, we may take $\varphi = \psi$, so assume $j > 1$.  By hypothesis $\varphi(F^e_* \fram^{j-1}) = R$. Choose $z \in \fram^{j-1}$ with $\varphi(F^e_* z) = 1$.  Form the map $\varphi_R(F^e_* \blank) = \psi\bigl(F^e_* (z \cdot \blank)\bigr)$, it has the desired properties.  This proves the claim.
\end{proof}

Since $R \subseteq S$ is \etale{} in codimension 1, $\varphi_R$ extends to a map $\varphi_S$ by \cite[Theorem 5.7]{SchwedeTuckerTestIdealFiniteMaps}.
Therefore we have the commutative diagram where the vertical maps are inclusions.
\[
\xymatrix{
F^e_* R \ar@{^{(}->}[d] \ar[r]^-{\varphi_R} & R \ar@{^{(}->}[d] \\
F^e_* S  \ar[r]_-{\varphi_S} & S.
}
\]
We notice that $\fran$ is $\varphi_S$-compatible by \autoref{lem.Speyer}.
Thus by modding out the bottom row by the compatible ideal $\fran$ and the top row by $\fram = \fran \cap R$ we obtain the diagram
\[
\xymatrix{
F^e_* k \ar@{^{(}->}[d] \ar[r]^-{\varphi_k} & k \ar@{^{(}->}[d] \\
F^e_* \ell  \ar[r]_-{\varphi_{\ell}} & \ell.
}
\]
It follows from \cite[Proposition 5.2]{SchwedeTuckerTestIdealFiniteMaps} that $k \subseteq \ell$ is separable since $\varphi_k$ is surjective and hence not the zero map.  This proves the first result.

After completion, the second part follows from the standard claim below.
\begin{claim}
Suppose $(R, \fram, k) \subseteq (S, \fran, \ell)$ is a finite local inclusion of complete $F$-finite normal local rings with $k \subseteq \ell$ separable.  Let $K = K(R)$ and $L = K(S)$.  Then
\[
[\ell : k] \;\big|\; [L : K].
\]
\end{claim}
\begin{proof}[Proof of claim]
Since the extension of coefficient fields is separable, we can choose $k \subseteq \ell$ a containment of coefficient fields as well.
Now consider $R \otimes_k \ell$.  Obviously this maps to $S$.  Hence we have a factorization
\[
R \to R \otimes_k \ell \to S.
\]
Now $R \otimes_k \ell$ is a finite extension of $R$ with unique maximal ideal $\fram \otimes_k \ell$.  Also, since $\ell /k$ is geometrically normal, $R \otimes_k \ell$ is normal by \cite[Tag 06DF]{stacks-project}.  It follows that $R \otimes_k \ell$ is a normal local domain and a finite extension of $R$ with a map to $S$.  The map $R \otimes_k \ell \to S$ is also finite and must be injective because if it had a kernel, $S$ would be finite over a lower dimensional ring.  Hence $R \otimes_k \ell \subseteq S$. Let $T = K(R \otimes_k \ell)$.  Then we have $K \subseteq T \subseteq L$ and $[T : K] = [\ell : k]$ (since $R \otimes_k \ell$ is a free $R$-module of rank $[\ell : k]$).
This finishes the claim.
\end{proof}
It also finishes the proof of \autoref{lem.SepResFields}.
\end{proof}

\begin{remark}
It is not difficult to generalize \autoref{lem.SepResFields} to the context of pairs if $R \subseteq S$ is not necessarily \etInCdOne{}.  The only places where \etInCdOne{} is used is when we extend $\varphi_R$ to $\varphi_S$.  Indeed, if one assumes that $(R, \Delta)$ is strongly $F$-regular and that $\pi^* \Delta - \Ram$ is effective, the proof works without change.  We will not need this generalization in what follows, however.
\end{remark}

\subsection{Local fundamental groups of singularities}

The study of local fundamental groups of (normal) singularities has a long history, having early origins in the study of resolution of singularities in positive characteristic based on the work of S. Abhyankar and others (this mostly related to fundamental groups of a curve singularities). It also goes back for example to the work of D. Mumford \cite{MumfordFundGroup} in which it is proven that for the analytic germ of a normal complex surface, regularity or smoothness is equivalent to the triviality of the local fundamental group.  
The same principle was generalized to the algebraic setting by H. Flenner in \cite{FlennerReineLokaleRinge}, see also \cite[Corollary 5]{CutkoskySrinivasanFundGroupSurfacePositiveChar}. This however is known to be false in positive characteristic by examples given by M. Artin in \cite{ArtinCoveringsOfTheRtionalDoublePointsInCharacteristicp}, see also \cite{CutkoskySrinivasanFundGroupSurfacePositiveChar}. Our results are focused on higher dimensions however.

By local fundamental group of a singularity here we understand and mean the \etale{} fundamental group of the punctured spectrum of a strictly local\footnote{Also called strictly Henselian, which means Henselian with separably closed residue field.} normal domain $(R, \fram, k)$, \ie $\fg\bigl(\pSpec(R)\bigr)$, as defined in \cite[Expos\'{e} V]{GrothendieckSGA} or well in \cite{MurreLecturesFundamentalGroups}. Notice that since $R$ is a domain, its punctured spectrum is connected, thereby it makes sense to talk about its \'{e}tale fundamental group.

As customary for normal schemes, we choose the generic point as our base point $\bar{x}$, \ie our base point is going to be the field extension $\bar{x} : K \hookrightarrow K^{\text{sep}}$, the or some separable closure of $K$ the fraction field of $R$.

We will need to compare \etale{} coverings of a normal connected scheme $X$ and the \etale{} coverings of an open connected subscheme $U$ of it, say $U=X\setminus Z$ with $Z \subseteq X$ of codimension $\geq 2$. Then we need to observe the following two Galois categories are equivalent \cf \cite{ArtinCoveringsOfTheRtionalDoublePointsInCharacteristicp} for the local case we care about: the category  $\mathscr{G}$ of normal schemes finite over $X$ which are \'{e}tale except possibly above $Z$; and the category $\mathsf{FEt}/U$ of \'{e}tale coverings of $U$. We clearly have a functor $\mathscr{G} \to \mathsf{FEt}/U$ given by restriction to $U$, i.e. $(f:Y \to X) \longmapsto (f: f^{-1}(U) \longmapsto U)$, or as more classically described in this context $Y/X \longmapsto (Y\times_X U) / U$. In fact, this functor gives rise to an equivalence of Galois categories. This is nothing but a consequence of Zariski's Main Theorem, see for instance \cite[Theorem 1.8, Chapter 1]{MilneEtaleCohomology}. For if we have an \'{e}tale covering $V \to U$, then the composition or extension $V \to U \subseteq X$ is a quasi-finite morphism, hence there exists a scheme $Y$ together with a finite morphism  $Y \to X$ such that $V$ can be realized as an open subscheme of $Y$ and the quasi-finite morphism above $V \to X$ factors as $V \subseteq Y \to X$. In other words, any \'{e}tale covering $V\to U$ can be realized as the restriction of a finite map $Y \to X$ which is \'{e}tale everywhere except possibly above the complement of $U$, that is \etale{} on $V$. Let us use the terminology, any \'etale cover of $U$ extends to a cover of $X$.

The above observation will be of great help for us inasmuch as by definition $\pi_1(U)$ is the fundamental group classifying or pro-representing the Galois category $\mathsf{FEt}/U$, but we use the equivalence above to classify instead the category $\mathscr{G}$. In case $X=\Spec(R)$ with $R$ a singularity as above, this category is the same as the category of module-finite (semi-local) inclusions of normal rings $R\subseteq S$ (where the morphism are the morphism of $R$-algebras) which are \'{e}tale except possibly at the prime ideals lying over the closed $Z$ (or $\fram$ when $U$ is the punctured spectrum of $R$). Hence, we can work in a ring-theoretic setting.

Another advantage is obtained in the understanding of the Galois coverings. For remember that in order to compute $\pi_1$ one takes the directed projective system of connected/minimal and Galois elements $R \subseteq S$ in $\mathscr{G}$, and then considers its associated projective limit
$$ \pi_1 = \varprojlim{\Aut_{\mathscr{G}}{S}}. $$
However, since the base normal domain $R$ is Henselian, the connected/minimal elements of $\mathscr{G}$ are precisely the local domains in $\mathscr{G}$, this is precisely the equivalence between (a) and (b) in \cite[Theorem 4.2, Chapter 1]{MilneEtaleCohomology}. The Galois extensions here are those $R \subseteq S$ with finite Galois extensions of fraction fields, moreover in that case $\Aut_{\mathscr{G}}{S}=\Hom_R(S, K^{\mathrm{sep}})=\mathrm{Gal}(L/K)$ where $L$ is the fraction field of $S$.

We finish this series of remarks by pointing out that for general normal schemes there is a particularly nice way to describe the computation of the respective \etale{} fundamental group. Say $S$ is a normal and connected scheme, consequently integral, so if we choose the base point to be a fixed separable closure of the fraction field $K$, we can take $X_i$ to be the normalization of $X$ in $K_i$ where the $K_i$ are the finite Galois extension of $K$ inside of our fixed separable closure such that $X_i$ is unramified over $X$, then
$$\pi_1 = \varprojlim \mathrm{Gal}(K_i/K) .$$
\section{$F$-Signature goes up under the presence of ramification}

Suppose $(R,\mathfrak{m}, k) \subseteq (S, \mathfrak{n}, \ell)$ is a module-finite local extension of $F$-finite characteristic $p>0$ $d$-dimensional local domains with corresponding extension of fraction fields $K \subseteq L$. According to \cite[Corollary 4.13]{TuckerFSigExists} if one writes $S=R^{\oplus f} \oplus M$ as a decomposition of $R$-modules so that $M$ has no nonzero free direct summands, then the following inequality relating the $F$-signatures holds:
$$ f \cdot s(S) \leq [L:K] \cdot s(R). $$
In the theorem below, we show that if the extension above is \etInCdOne{} and if $R$ is strongly $F$-regular, then equality holds.  
We will also observe that if $\ell = k$, then $f = 1$. So that if there is any ramification (i.e. the extension is not \'{e}tale everywhere) then the $F$-signature of $S$ would be at least twice the $F$-signature of $R$.
\begin{theorem}\label{thm.formula_signature}
Let $(R,\mathfrak{m}, k) \subseteq (S, \mathfrak{n}, \ell)$ be a module-finite local extension of $F$-finite $d$-dimensional normal local domains in characteristic $p>0$, with corresponding extension of fraction fields $K \subseteq L$.
Suppose $R \subseteq S$ is \etInCdOne, and that $R$ is strongly $F$-regular. Then 
if one writes $S=R^{\oplus f} \oplus M$ as a decomposition of $R$-modules so that $M$ has no nonzero free direct summands, then $f =[\ell: k] \geq 1$ and the following equality holds:
\[
s(S)= \frac{[L:K]}{[\ell:k]} \cdot s(R).
\]
\end{theorem}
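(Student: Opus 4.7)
My plan is to compute the maximum number of free $R$-summands of $F^e_* S$ in two ways---once via Tucker's module version of the $F$-signature, and once via a trace-induced isomorphism $\Hom_R(F^e_* S, R) \cong \Hom_S(F^e_* S, S)$---and then show that the two answers match with the expected factor $[\ell:k]$.

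First I handle $f = [\ell:k]$. Since $R \subseteq S$ is étale in codimension $1$, the ramification divisor on $\Spec S$ vanishes, so applying the Maps and Divisors Definition--Proposition (with $A = B = 0$) gives $\Hom_R(S, R) \cong S$ as $S$-modules, with the trace map $\Tr \colon S \to R$ corresponding to a generator (so in particular $\Tr$ is surjective; see also \cite[Proposition 7.4]{SchwedeTuckerTestIdealFiniteMaps}). Every $R$-linear map $S \to R$ therefore has the form $\Tr(s \cdot -)$, so every free $R$-summand of $S$ is a $\Tr$-summand. By \autoref{lem.TrSurjectsImpliesTrSummandsIslk} the number of $\Tr$-summands is $[\ell:k]$, so $f = [\ell:k]$.

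Next, set $\sG_e := \Hom_R(F^e_* S, R)$ and $\sC_{S,e} := \Hom_S(F^e_* S, S)$. Hom--tensor adjunction together with $\Hom_R(S, R) \cong S$ yields an isomorphism of $S$-modules $\sG_e \cong \sC_{S,e}$, $\psi \longmapsto \Tr \circ \psi$. Let $\sG_e^{\ns} \subseteq \sG_e$ and $\sC_{S,e}^{\ns} \subseteq \sC_{S,e}$ denote the submodules of maps whose images lie in $\fram$ and $\fran$ respectively. The crucial point is that this isomorphism matches $\sG_e^{\ns}$ with $\sC_{S,e}^{\ns}$: if $\psi(F^e_* S) \subseteq \fran$, then $\Tr \circ \psi(F^e_* S) \subseteq \Tr(\fran) \subseteq \fram$ by \autoref{lem.Note_Trace}; conversely, if $\psi(F^e_* S) = S$, then $\Tr \circ \psi(F^e_* S) = \Tr(S) = R$ since $\Tr$ is surjective. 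Hence we obtain an induced isomorphism $\sG_e / \sG_e^{\ns} \cong \sC_{S,e} / \sC_{S,e}^{\ns}$.

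Finally, I take $R$-lengths. By the standard dictionary (in the style of \cite[Lemma 3.6]{BlickleSchwedeTuckerFSigPairs1}), $\lambda_R(\sG_e / \sG_e^{\ns})$ is precisely the maximum number of simultaneous free $R$-summands of $F^e_* S$; Tucker's module version of the $F$-signature \cite[Corollary 4.13]{TuckerFSigExists}, applied to $S$ as a finitely generated $R$-module of rank $[L:K]$, gives
\[
\lim_{e \to \infty} \frac{\lambda_R(\sG_e / \sG_e^{\ns})}{p^{e(d + \alpha_R)}} = [L:K] \cdot s(R).
\]
On the other hand $\lambda_S(\sC_{S,e} / \sC_{S,e}^{\ns}) = a_e^S$, and since each $S$-composition factor contributes $[\ell:k]$ to the $R$-length, $\lambda_R(\sC_{S,e} / \sC_{S,e}^{\ns}) = [\ell:k] \cdot a_e^S$. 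By \autoref{lem.SepResFields} (which uses the strong $F$-regularity of $R$), $k \subseteq \ell$ is separable, forcing $\alpha_R = \alpha_S =: \alpha$; dividing by $p^{e(d+\alpha)}$ and taking limits yields $[L:K] \cdot s(R) = [\ell:k] \cdot s(S)$, as desired. I expect the main subtlety to be the identification $\sG_e^{\ns} \leftrightarrow \sC_{S,e}^{\ns}$, where both directions rely essentially on the étale-in-codimension-1 hypothesis through the surjectivity of $\Tr$ and \autoref{lem.Note_Trace}.
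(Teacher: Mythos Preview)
Your proof is correct and follows essentially the same route as the paper's: both arguments count free $R$-summands of $F^e_* S$ in two ways, using that $\Hom_R(S,R) = \Tr \cdot S$ (so every $R$-linear map factors through $\Tr$), that $\Tr(\fran) \subseteq \fram$ (\autoref{lem.Note_Trace}), and Tucker's module $F$-signature result to identify the limit with $[L:K]\cdot s(R)$. The only cosmetic difference is that you phrase the key step as an isomorphism $\sG_e/\sG_e^{\ns} \cong \sC_{S,e}/\sC_{S,e}^{\ns}$ and compare $R$-lengths, whereas the paper decomposes $F^e_* S = S^{\oplus a_e(S)} \oplus N_e$ and argues directly that $N_e$ has no free $R$-summands; also, your appeal to \autoref{lem.SepResFields} for $\alpha_R = \alpha_S$ is unnecessary, since for any finite extension $K \subseteq L$ one has $[K^{1/p}:K] = [L^{1/p}:L]$ and hence $\alpha(R) = \alpha(S)$ whenever $\dim R = \dim S$.
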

\begin{proof}
We notice that if $R$ is strongly $F$-regular, so is $S$ by \cite[Theorem 2.7]{WatanabeFRegularFPure}, so both $R$ and $S$ have positive $F$-signature by \cite{AberbachLeuschke}.  Also see \cite[Lemma 3.5, Lemma 3.6]{SchwedeTuckerTestIdealFiniteMaps}. 

By \cite[Proposition 4.8]{SchwedeTuckerTestIdealFiniteMaps}, it follows that the trace map $\Tr: S \to R$ generates the $S$-module $\Hom_R(S,R)$. Moreover, by \cite[Corollary 7.7]{SchwedeTuckerTestIdealFiniteMaps}, the trace map is surjective (this following from $R$ being strongly $F$-regular).  By \autoref{lem.TrSurjectsImpliesTrSummandsIslk} $f = [\ell:k]$.

Having the above in mind, we follow the proof of \cite[Corollary 4.13]{TuckerFSigExists}, trying to improve the estimates there by using these stronger conditions.
We notice that
\[
\alpha(R):=\log_p\bigl([k^{1/p}:k]\bigr)=\log_p\bigl([\ell^{1/p}:\ell]\bigr)=:\alpha(S),
 \]
and denote by $b_e$ the maximal rank of a free $R$-module appearing in a direct sum decomposition of $S^{1/p^e}$. If one writes a decomposition $S^{1/p^e}=S^{\oplus a_e(S)} \oplus N_e$ as $S$-modules where $N_e$ does not admit a free direct summand as $S$-module, then one also gets a decomposition of $S^{1/p^e}$ as an $R$-module
\[
S^{1/p^e} = \bigl(R^{\oplus f} \oplus M\bigr)^{a_e(S)} \oplus N_e = R^{f \cdot a_e(S)} \oplus M^{\oplus a_e(S)} \oplus N_e.
\]
From this one concludes that $b_e \geq f \cdot a_e(S)$.  However, equality might not hold because of the possibility of free $R$-summands coming from $N_e$.  We will show that this cannot happen under our stronger hypotheses. In fact, if $N$ is any $S$-module with no nonzero free direct $S$-summands, we will show it has no free direct $R$-summands as well. Indeed, by \cite[Lemma 3.9]{SchwedeFAdjunction} one has that any $R$-linear map $N \to R$ is going to admit a factorization through the trace map, \ie $N\to S \xrightarrow{\Tr} R$ (for this we are making use of the condition $\Hom_R(S,R)=\Tr \cdot S$). Then, by virtue of the inclusion $\Tr(\mathfrak{n}) \subseteq \mathfrak{m}$ \autoref{lem.Note_Trace}, if $N \to R$ is surjective so has to be the factor $N\to S $, so that any free $R$-summand of $N$ would give rise to a free $S$-summand.

In conclusion, we have that $b_e = f \cdot a_e(S)$. By dividing through  by $p^{e(d+\alpha(R))}= p^{e(d+\alpha(S))}$ and letting $e \to \infty$ one obtains the desired equality (this by making use of \cite[Theorem 4.11]{TuckerFSigExists}).
\end{proof}

The following corollary will be one of the key ingredients to show the finiteness of the \'{e}tale fundamental group of a strongly $F$-regular singularity. It reflects how the $F$-signature imposes strong conditions for the existence of non trivial coverings of a punctured spectrum.

\begin{corollary}
\label{cor.F_Signature_goes_up_under_ramification}
With the same setup as in \autoref{thm.formula_signature}, if the inclusion is not \'{e}tale everywhere, then $s(S)\geq 2s(R)$.
\end{corollary}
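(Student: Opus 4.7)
The plan is to combine \autoref{thm.formula_signature} with \autoref{lem.SepResFields} and then argue by contradiction. By \autoref{thm.formula_signature} we have $s(S)=\frac{[L:K]}{[\ell:k]}\cdot s(R)$, and by \autoref{lem.SepResFields} the integer $[\ell:k]$ divides $[L:K]$, so the ratio is a positive integer. Therefore the conclusion $s(S)\geq 2\cdot s(R)$ is equivalent to the inequality $[L:K]\neq[\ell:k]$.

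I would now suppose for contradiction that $[L:K]=[\ell:k]$. Since $R$ is strongly $F$-regular and $R\subseteq S$ is \etInCdOne{}, the trace map $\Tr\colon S\to R$ is surjective (exactly as recalled in the proof of \autoref{thm.formula_signature}). Under the equality $[L:K]=[\ell:k]$, the second assertion of \autoref{lem.TrSurjectsImpliesTrSummandsIslk} then forces $S$ to be a free $R$-module of rank $[L:K]$. Consequently $\pi\colon\Spec S\to\Spec R$ is a finite \emph{flat} morphism of normal schemes which is \etInCdOne{}.

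To derive a contradiction I would then run a standard discriminant argument. Choose an $R$-basis $s_1,\ldots,s_n$ of $S$ with $n=[L:K]$, and set $d:=\det\bigl(\Tr(s_is_j)\bigr)\in R$; this discriminant element generates the ideal defining the branch locus of $\pi$. Since the extension is \etInCdOne{}, one has $d\notin\mathfrak{p}$ for every height-one prime $\mathfrak{p}\subset R$, so the principal Weil divisor $\Div_R(d)$ vanishes. Because $R$ is normal, this forces $d\in R^{\times}$, so $\pi$ is \etale{} everywhere, contradicting the hypothesis that $R\subseteq S$ is not \etale{} everywhere.

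The only delicate point is the last implication, namely that a finite flat extension of normal domains which is \etInCdOne{} is automatically \etale{} everywhere. This is immediate from the discriminant computation above, and can equivalently be viewed as a consequence of Zariski--Nagata purity of the branch locus applied to the flat morphism $\pi$; so in fact the whole argument amounts to peeling the degrees apart via \autoref{thm.formula_signature}, upgrading \etInCdOne{} plus $[\ell:k]=[L:K]$ to flatness via \autoref{lem.TrSurjectsImpliesTrSummandsIslk}, and closing the loop with one discriminant computation.
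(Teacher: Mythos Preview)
Your argument is correct and follows essentially the same route as the paper: use \autoref{thm.formula_signature} together with the divisibility $[\ell:k]\mid[L:K]$ from \autoref{lem.SepResFields} to reduce to showing $[L:K]\neq[\ell:k]$, then if equality held invoke \autoref{lem.TrSurjectsImpliesTrSummandsIslk} to get freeness and conclude \'etaleness everywhere. The only difference is cosmetic: for the last implication the paper cites \cite[Chapter VI, Theorem~6.8]{AltmanKleimanIntroToGrothendieckDuality}, whereas you spell out the discriminant computation directly.
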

\begin{proof}
Since in our case, $[\ell : k] \;|\; [L : K]$ by \autoref{lem.SepResFields}, it is sufficient to show that $[L : K] > [\ell : k]$.  But if we have equality, then $S$ is a free $R$-module by \autoref{lem.TrSurjectsImpliesTrSummandsIslk} and hence $R \subseteq S$ is \etale{} everywhere by \cite[Chapter VI, Theorem 6.8]{AltmanKleimanIntroToGrothendieckDuality}.
%
\end{proof}


Now we state and prove our promised purity of the branch locus result.  Compare with \cite{ZariskiPurity,NagataPurity,KunzRemarkOnPurityOfBranchLocus, SGA2, CutkoskyPurity}.

\begin{corollary}[Purity of the branch locus for rings with mild singularities]
\label{cor.PurityOfBranchLocus}
Suppose $Y \to X$ is a finite dominant map of $F$-finite normal integral schemes.  If $s(\O_{X,P}) > 1/2$ for all $P \in X$ then the branch locus of $Y \to X$ has no irreducible components of codimension $\geq 2$, in other words it is a divisor.
\end{corollary}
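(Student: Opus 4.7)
The plan is to argue by contradiction using \autoref{cor.F_Signature_goes_up_under_ramification}. Assume the branch locus $B$ of $\pi \colon Y \to X$ has an irreducible component $Z$ of codimension $\geq 2$, and let $P$ denote its generic point. After shrinking $X$ to a Zariski open neighborhood of $P$ disjoint from the other components of $B$, I may assume every component of $B$ has codimension $\geq 2$ and contains $P$; in particular $\pi$ becomes \etInCdOne{} but is still ramified over $P$.

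Next I would localize at $P$ and pass to the strict henselization, setting $R := (\O_{X,P})^{\mathrm{sh}}$. The $F$-signature, normality, strong $F$-regularity, and the \etInCdOne{} property all survive localization and strict henselization, so $s(R) = s(\O_{X,P}) > 1/2$ and $R$ is in particular strongly $F$-regular. Since $R$ is strictly henselian, the finite base change $Y \times_X \Spec R \to \Spec R$ splits as a finite disjoint union of spectra of local strictly henselian normal domains $S_i$ finite over $R$, with each extension $R \subseteq S_i$ still \etInCdOne{}. Because ramification persists under base change, at least one factor, say $R \subseteq S_1$, is not \'etale everywhere.

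Applying \autoref{cor.F_Signature_goes_up_under_ramification} to this module-finite local extension then forces
\[
s(S_1) \;\geq\; 2\, s(R) \;>\; 2 \cdot \tfrac{1}{2} \;=\; 1,
\]
which contradicts the universal bound $s(S_1) \leq 1$. Hence no codimension-$\geq 2$ component of $B$ can exist, \ie the branch locus is a divisor.

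The main obstacle is the reduction step: one must verify that $F$-signature, strong $F$-regularity, and the \etInCdOne{}/ramification structure transfer faithfully under localization and strict henselization, and that the semi-local base change genuinely breaks into the local pieces required by \autoref{thm.formula_signature} and \autoref{cor.F_Signature_goes_up_under_ramification}. Once those standard compatibilities are in place, the contradiction from $s(S_1) > 1$ is automatic.
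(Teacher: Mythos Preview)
Your argument is correct and follows essentially the same route as the paper's proof: localize at a generic point of a hypothetical codimension-$\geq 2$ component of the branch locus, pass to a suitable local extension to split the cover into local pieces, pick a piece that is \etInCdOne{} but not \etale, and derive $s(S_1) > 1$ from \autoref{cor.F_Signature_goes_up_under_ramification}. The only substantive difference is that the paper completes at $P$ rather than strictly henselizing; completion makes the invariance of $F$-signature and the decomposition of $S$ into normal local domains entirely standard, whereas your use of strict henselization requires the compatibility checks you yourself flag (they do hold, but completion avoids the need to justify them).
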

\begin{proof}
We work locally with $X = \Spec R$ and $Y = \Spec S$.
Let $P \subseteq R$ be a minimal prime of the locus where $R \subseteq S$ is not \'{e}tale, \ie a branch point. Suppose however the height of $P$ is at least $2$. Localizing at $P$ and completing, we may form $\bigl(\widehat{R_P}, \fram, k\bigr) \subseteq \prod S_i$ where the $S_i$ are complete normal local domains.  Since the original $R_P \subseteq S_P$ is not \etale, at least one of the finite inclusions $\widehat{R_P} \subseteq S_i$ is not \etale.  Switching notation, let $R = \widehat{R_P}$ and $S = S_i$ so that we have a finite inclusion $(R, \fram, k) \subseteq (S, \fran, \ell)$ with $s(R) > 1/2$.  We notice that this extension is \etInCdOne{} but not \etale.  But this contradicts \autoref{cor.F_Signature_goes_up_under_ramification} and the fact that $s(S) \leq 1$.
\end{proof}

\begin{remark}
Kunz began using Frobenius to study singularities because he observed that if $F^e_* R$ is a free $R$-module, then purity of the branch locus holds for $R$.  On the other hand, $s(R) > 1/2$ can be interpreted as saying that $F^e_* R$ is more than half-free as an $R$-module (at least for $e \gg 0$).  In other words, the above says that if $F^e_* R$ is more than half-free as an $R$-module for $e \gg 0$, then purity of the branch locus holds.
\end{remark}


We rephrase this in one more way, and point out that we can do slightly better in characteristic 2.


\begin{corollary}  
\label{cor.BoundGenericRnk}
Suppose $(R, \fram, k)$ is a local $F$-finite strongly $F$-regular domain with fraction field $K = K(R)$.  Then $\lfloor {1 / s(R)}\rfloor$ is an upper bound on the size of $[L : K]/[\ell : k]$, where $L = K(S)$ and $(R, \fram, k) \subseteq (S, \fran, \ell)$ is an extension of normal local domains that is \etInCdOne.  In the case that $k$ is separably closed, $\lfloor {1 / s(R)}\rfloor$ is an upper bound on the maximum size of $[L : K]$ over any such \etale-in-codimension-one extension.  Returning to the case of a general choice of $k$, if
\begin{itemize}
\item[(i)] $s(R) > 1/2$ or,
\item[(ii)]  $s(R) > 1/3$, $p = 2$ and $k$ is separably closed,
\end{itemize}
then there is no proper finite local extension of normal domains $(R, \fram) \subseteq (S, \fran)$ that is \etale{} in codimension 1 but not \etale{} (N.B. this implies triviality of the respective local fundamental group).
\end{corollary}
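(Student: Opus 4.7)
The plan is to reduce everything to Theorem~\ref{thm.formula_signature} (Theorem B), which gives the identity $s(S) = \tfrac{[L:K]}{[\ell:k]} \cdot s(R)$ whenever $(R,\fram,k) \subseteq (S,\fran,\ell)$ is a finite local \etInCdOne{} extension of normal domains with $R$ strongly $F$-regular. First I would observe that Lemma~\ref{lem.SepResFields} guarantees $[\ell:k] \mid [L:K]$, so the ratio is a positive integer; combining with the universal bound $s(S) \leq 1$ yields $[L:K]/[\ell:k] \leq \lfloor 1/s(R)\rfloor$. This handles the general inequality. For the case that $k$ is separably closed, the same lemma forces $\ell = k$ (since $k \subseteq \ell$ is separable), so $[\ell:k] = 1$ and $[L:K] \leq \lfloor 1/s(R)\rfloor$.

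For case (i), I would argue by contradiction: if $s(R) > 1/2$ and there were a proper finite local extension which is \etInCdOne{} but not \'etale, then Corollary~\ref{cor.F_Signature_goes_up_under_ramification} would give $s(S) \geq 2 s(R) > 1$, contradicting $s(S) \leq 1$. No hypothesis on $k$ is needed here.

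For case (ii), the hypothesis $s(R) > 1/3$ combined with the first part of the corollary (applicable since $k$ is separably closed) gives $[L:K] \leq \lfloor 1/s(R) \rfloor \leq 2$. If $[L:K] = 1$ then $R \subseteq S$ is a finite birational inclusion of normal domains, hence an isomorphism, ruling out properness. The remaining possibility $[L:K] = 2 = p$ means $p$ divides $[L:K]$ while $\ell = k$ (again by separability of $k \subseteq \ell$ and $k$ being separably closed). But strongly $F$-regular rings are splinters, so this configuration is precisely what Corollary~\ref{cor.PurityOfBranchLocusForCovers} forbids, giving the desired contradiction.

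I do not expect any real obstacle: the result is a bookkeeping consequence of the machinery already assembled, with Theorem~\ref{thm.formula_signature} supplying the quantitative bound, Lemma~\ref{lem.SepResFields} supplying integrality of $[L:K]/[\ell:k]$ and the reduction $\ell = k$ in the separably-closed case, Corollary~\ref{cor.F_Signature_goes_up_under_ramification} handling (i), and Corollary~\ref{cor.PurityOfBranchLocusForCovers} handling the wild residual borderline case of (ii). The only subtle point worth double-checking is that in (ii) the assumption ``$k$ separably closed'' is genuinely being used twice: once through Lemma~\ref{lem.SepResFields} to turn $[L:K]/[\ell:k]$ into $[L:K]$ (so that the $\lfloor 1/s(R)\rfloor \leq 2$ estimate is a bound on $[L:K]$ itself), and once to verify the residue hypothesis of Corollary~\ref{cor.PurityOfBranchLocusForCovers}.
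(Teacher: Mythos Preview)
Your proposal is correct and follows essentially the same route as the paper's proof: the first bound comes from Theorem~\ref{thm.formula_signature} together with $s(S)\le 1$, the separably-closed reduction uses Lemma~\ref{lem.SepResFields}, part (i) is Corollary~\ref{cor.F_Signature_goes_up_under_ramification} (the paper cites the equivalent Corollary~\ref{cor.PurityOfBranchLocus}), and part (ii) is handled via Corollary~\ref{cor.PurityOfBranchLocusForCovers}. Your version is in fact slightly more explicit than the paper's, since you spell out why $[L:K]/[\ell:k]$ is an integer (needed to pass from $\le 1/s(R)$ to $\le \lfloor 1/s(R)\rfloor$) and you dispose of the $[L:K]=1$ case directly; the paper's argument for (ii) instead shows $g>2$ and contradicts the bound $g\le 2$, which is just the contrapositive of what you wrote.
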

\begin{proof}
Suppose that $(R, \fram, k) \subseteq (S, \fran, \ell)$ is a finite local extension that is \etale{} in codimension 1 and with $g := [L : K]/[\ell : k] > \lfloor {1 / s(R)}\rfloor$.  Hence by \autoref{thm.formula_signature}, we obtain $s(S) = g \cdot s(R) > 1$ a contradiction.  This proves the first statement.  Next if $k$ is separably closed, then $k= \ell$ by \autoref{lem.SepResFields} which proves the second.  Part (i) is just a local restatement of \autoref{cor.PurityOfBranchLocus}.   For (ii), \autoref{cor.PurityOfBranchLocusForCovers} implies that $g > 2$.  The result follows.
\end{proof}




\begin{example} It is known that the $F$-signature of $\ol{\mathbb{F}}_p \llbracket x_0,...,x_3 \rrbracket/(x_0^2+\cdots + x_3^2)$ is $\mbox{3/4}$, see \cite[Theorem 3.1]{WatanabeYoshidaHKMultiplicityOfThreeDimensionalRings}, \cite[Proof of Theorem 11]{HunekeLeuschkeTwoTheoremsAboutMaximal}, \cite[Proof of Proposition 4.22]{TuckerFSigExists}.  Of course, it is well known that this complete intersection satisfies purity of the branch locus \cite{SGA2}.  
\end{example}

\begin{corollary}
Suppose that $\sL$ is an ample line bundle on a globally $F$-regular projective variety $X$ over an algebraically closed field $k$.  Write $R = \bigoplus_{i \geq 0} H^0(X, \sL^i)$.  If $\sL = \sA^m$ for another line bundle $\sA$, with $p \not|\;\, m$, then $m \leq {1 / s(R)}$.
\end{corollary}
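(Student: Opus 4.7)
The plan is to exhibit an explicit finite \etInCdOne{} local extension of $R$ of generic rank exactly $m$ and then invoke \autoref{cor.BoundGenericRnk}. The natural candidate is $S := \bigoplus_{i \geq 0} H^0(X, \sA^i)$, the section ring of $\sA$. Since $\sA^m = \sL$ is ample, $\sA$ itself is ample, so $S$ is a finitely generated normal graded $k$-domain. The identification $R_i = H^0(X, \sL^i) = H^0(X, \sA^{mi}) = S_{mi}$ yields a graded inclusion $R \hookrightarrow S$. Since $k$ is algebraically closed and $p \nmid m$, a primitive $m$-th root of unity $\zeta \in k$ exists, and the assignment $s \mapsto \zeta^i s$ for $s \in S_i$ defines a $\mu_m$-action on $S$ with $S^{\mu_m} = R$; hence $R \subseteq S$ is finite Galois of generic rank $m$.

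Next I would pass to the local setting. Let $\fram \subseteq R$ and $\fran \subseteq S$ be the irrelevant maximal ideals; both residue fields equal $H^0(X, \sO_X) = k$. For any $f \in S_i$ with $i \geq 1$ we have $f^m \in S_{mi} = R_i \subseteq \fram$, so $\sqrt{\fram S} = \fran$; in particular $\fran$ is the unique prime of $S$ lying over $\fram$. Consequently $T := S \otimes_R R_\fram$ is a finite normal local extension of $(R_\fram, \fram R_\fram, k)$ with residue field $k$. By the definition of global $F$-regularity, $R_\fram$ is strongly $F$-regular, with $s(R_\fram) = s(R)$.

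The main expected obstacle is to verify that $R_\fram \subseteq T$ is \etInCdOne{}. The point is that on the punctured spectra, the structure maps to $X$ realize $\Spec R_\fram \setminus \{\fram R_\fram\}$ and $\Spec T \setminus \{\fran T\}$ as affine $\bG_m$-torsors for $\sL$ and $\sA$ respectively, and the induced map between them is the relative $m$-th power map, hence a $\mu_m$-torsor. Since $p \nmid m$, the group scheme $\mu_m$ is \etale{} over $k$, so this cover is \etale{} away from the closed point; and the closed point has codimension $\dim X + 1 \geq 2$ (the degenerate case $\dim X = 0$ being excluded). Applying \autoref{cor.BoundGenericRnk} to $R_\fram \subseteq T$ with $[\ell : k] = 1$ then yields $m = [K(T) : K(R_\fram)] \leq \lfloor 1/s(R) \rfloor \leq 1/s(R)$, as required.
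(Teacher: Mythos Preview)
Your argument is correct and follows the same strategy as the paper: form the section ring $S$ of $\sA$, recognize $R\subseteq S$ as a finite extension of generic rank $m$ that is \etInCdOne, pass to the local (or complete) ring at the irrelevant ideal, and apply the transformation rule for $F$-signature. The paper simply cites \cite[Lemma~5.7]{SchwedeSmithLogFanoVsGloballyFRegular} for the \etInCdOne{} property and then invokes \autoref{thm.formula_signature} after completion; you instead give a direct $\mu_m$-torsor argument on the punctured cones and apply \autoref{cor.BoundGenericRnk} after localization, which is a legitimate and slightly more self-contained variant. One small quibble: the inclusion $R\hookrightarrow S$ multiplies degrees by $m$ rather than preserving them, so it is not ``graded'' in the usual sense; this does not affect anything in your argument.
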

\begin{proof}
We have an inclusion $R \subseteq S = \bigoplus_{i \geq 0} H^0(X, \sA^i)$ which, while not graded, simply multiplies degrees by $m$.  We notice that this inclusion is \etInCdOne, see for instance \cite[Lemma 5.7]{SchwedeSmithLogFanoVsGloballyFRegular}.  Furthermore, it is easy to see that it has generic rank $m$.  The result follows immediately by completion (or Henselization) and \autoref{thm.formula_signature}.
\end{proof}

\section{Behavior of $F$-signature of pairs under finite morphisms}

In this section we generalize the formula of \autoref{thm.formula_signature} to maps which are not necessarily \etInCdOne.  We include it in a separate section because of its much more technical proof.
We will see that in compensation, the formalism of divisors provides a way to include the presence of codimension-1 ramification into the transformation formula, so that we can recover the formula without pairs in the case of extensions that are \etInCdOne.  

As in the proof of \autoref{thm.formula_signature}, one of the main ingredients will be the capability of factoring through the trace maps. The following two lemmas are formulated here as generalization of the sort of factorization in \cite[Lemma 3.9]{SchwedeFAdjunction}, we do it in two steps; we first add pure ramification with \autoref{lem.TraceComposedWithRamMaps} and secondly the presence of pairs with \autoref{lem.TraceComposedWithRamMapsAndDeltas}.

\begin{lemma}
\label{lem.TraceComposedWithRamMaps}
Suppose that $R \subseteq S$ is a finite separable extension of normal domains with ramification divisor $\Ram$ on $\Spec S$. Then the image of the map induced by composition
\[
\Hom_S\bigl(F^e_* S(\Ram), S(\Ram)\bigr) \times \Hom_R\bigl(S(\Ram), R\bigr) \to \Hom_R(F^e_* S, R)
\]
contains $\Hom_R\bigl(F^e_* S(\Ram), R\bigr) \subseteq \Hom_R(F^e_* S, R)$.  In particular, any map $\vartheta : F^e_* S \to R$ which factors through a map $F^e_* S(\Ram) \to R$ also factors through $\Tr : S(\Ram) \to R$.
\end{lemma}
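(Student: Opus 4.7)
The plan is to reduce the statement to Hom--tensor adjunction, with the key input being the identification of $\Hom_R(S, R)$ as an $S$-module coming from the Definition-Proposition on maps and divisors. Since $R \subseteq S$ is finite separable between normal domains, $K_S - \pi^* K_R = \Ram$, and that result gives $\Hom_R(S, R) \cong S(\Ram)$ as $S$-modules, where an element $f \in S(\Ram)$ corresponds to the $R$-linear map $\Tr(f \cdot \blank) : S \to R$; in particular $1 \in S(\Ram)$ corresponds to $\Tr : S \to R$. Equivalently, $\Hom_R\bigl(S(\Ram), R\bigr) \cong S$ is generated by the extended trace $\Tr : S(\Ram) \to R$, which on $f \in S(\Ram) \subseteq K(S)$ returns $\Tr(f) \in R$.

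With this identification, applying Hom--tensor adjunction to the $S$-module $F^e_* S(\Ram)$ and the $R$-module $R$ yields
\[
\Hom_R\bigl(F^e_* S(\Ram), R\bigr) \;\cong\; \Hom_S\bigl(F^e_* S(\Ram), \Hom_R(S, R)\bigr) \;\cong\; \Hom_S\bigl(F^e_* S(\Ram), S(\Ram)\bigr).
\]
Unraveling this adjunction, a given $\psi \in \Hom_R\bigl(F^e_* S(\Ram), R\bigr)$ corresponds to the $S$-linear map $\varphi : F^e_* S(\Ram) \to S(\Ram)$ characterized by the identity $\Tr\bigl(s \cdot \varphi(x)\bigr) = \psi(sx)$ for all $s \in S$ and $x \in F^e_* S(\Ram)$. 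Setting $s = 1$ produces $\psi(x) = \Tr\bigl(\varphi(x)\bigr)$; that is, $\psi = \Tr \circ \varphi$, which is precisely the desired factorization through the pair $\bigl(\varphi, \Tr\bigr)$.

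This establishes the first assertion, since every $\psi \in \Hom_R\bigl(F^e_* S(\Ram), R\bigr)$ is the image of $(\varphi, \Tr)$ under composition. The ``in particular'' clause then follows immediately: any $\vartheta : F^e_* S \to R$ that factors as $F^e_* S \hookrightarrow F^e_* S(\Ram) \xrightarrow{\psi} R$ also factors as $F^e_* S \hookrightarrow F^e_* S(\Ram) \xrightarrow{\varphi} S(\Ram) \xrightarrow{\Tr} R$. I do not foresee serious obstacles here; the only delicate point is tracking how the $S$-module structure on $\Hom_R(S, R)$ matches the divisorial identification so that the generator corresponds precisely to $\Tr$, but this is recorded by the Definition-Proposition and the whole argument is otherwise a formal adjunction manipulation.
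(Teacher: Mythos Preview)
Your proof is correct and follows essentially the same approach as the paper: both use Hom--tensor adjunction to identify $\Hom_R\bigl(F^e_* S(\Ram), R\bigr) \cong \Hom_S\bigl(F^e_* S(\Ram), S(\Ram)\bigr)$ via $\Hom_R(S,R)\cong S(\Ram)$, and then observe that the adjunction unit corresponds to composition with $\Tr$. Your write-up is slightly more explicit in unwinding the adjunction (writing out $\Tr(s\cdot\varphi(x))=\psi(sx)$ and setting $s=1$), but the argument is the same.
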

\begin{proof}
We know
\begin{align*}
\Hom_R\bigl(F^e_* S(\Ram), R\bigr)  \cong  \Hom_R\Bigl(\bigl(F^e_* S(\Ram)\bigr) \otimes_S S, R \Bigr) \cong & \Hom_S\bigl(F^e_* S(\Ram), \Hom_R(S, R)\bigr) \\
\cong & \Hom_S\bigl(F^e_* S(\Ram), S(\Ram)\bigr).
\end{align*}
Given an element $\alpha \in \Hom_S\bigl(F^e_* S(\Ram), \Hom_R(S, R)\bigr)$ it is identified with a map $\beta \in \Hom_R\bigl(F^e_* S(\Ram), R\bigr)$ through $\Hom-\tensor$ adjointness by composing with the evaluation-at-1 map $\Hom_R(S,R) \to R$.
In our case,
\[
\vartheta \in \Hom_R\bigl(F^e_* S(\Ram), R)
\]
yields a map $\psi \in \Hom_S\bigl(F^e_* S(\Ram), S(\Ram)\bigr)$ such that, since $\Tr : S(\Ram) \to R$ corresponds to the evaluation-at-1 map $\Hom_R(S, R) \to R$, we know that $\vartheta = \Tr \circ \psi$.
\end{proof}

\begin{lemma}
\label{lem.TraceComposedWithRamMapsAndDeltas}
Suppose that $R \subseteq S$ is a finite separable extension of normal domains with $\pi : Y:=\Spec S  \to X:= \Spec R$ the corresponding morphism of schemes, and that $\Delta \geq 0$ is a $\bQ$-divisor on $X$ such that $\pi^* \Delta - \Ram$ is effective.  Then any map
\[
\vartheta \in \Hom_R\bigl(F^e_* S( \lfloor p^e \pi^* \Delta \rfloor + \Ram), R\bigr) \subseteq \Hom_R(F^e_* S, R)
\]
when restricted to $\vartheta :F^e_* S \to R$ factors through $\Tr : S \to R$.
\end{lemma}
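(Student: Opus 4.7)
The plan is to adapt the proof of \autoref{lem.TraceComposedWithRamMaps} and use the extra positivity coming from $\lfloor p^e \pi^* \Delta \rfloor$ to force the intermediate factorization to actually pass through $S \subseteq S(\Ram)$. Set $B := \lfloor p^e \pi^* \Delta \rfloor + \Ram$, an effective integral divisor on $\Spec S$. Exactly as in \autoref{lem.TraceComposedWithRamMaps}, the $\Hom$-$\otimes$ adjointness together with the identification $\Hom_R(S, R) \cong S(\Ram)$ gives
\[
\Hom_R\bigl(F^e_* S(B), R\bigr) \cong \Hom_S\bigl(F^e_* S(B), S(\Ram)\bigr),
\]
so the given $\vartheta$ corresponds to an $S$-linear map $\psi_0 : F^e_* S(B) \to S(\Ram)$ satisfying $\vartheta = \widetilde{\Tr} \circ \psi_0$, where $\widetilde{\Tr} : S(\Ram) \to R$ is the extension of the trace via the ``evaluation at $1$'' map $\Hom_R(S, R) \to R$.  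The whole point is then to show that $\psi_0$ sends the submodule $F^e_* S \subseteq F^e_* S(B)$ into $S \subseteq S(\Ram)$, since in that case $\psi := \psi_0|_{F^e_* S} : F^e_* S \to S$ witnesses the factorization $\vartheta|_{F^e_* S} = \Tr \circ \psi$.

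To verify $\psi_0(F^e_* S) \subseteq S$, I would work in codimension one on $\Spec S$: the claim is a containment of reflexive $S$-submodules of $S(\Ram)$, hence can be checked at every height-one prime $Q \subseteq S$.  Fix such a $Q$ with local uniformizer $t$, and put $r := \ord_Q \Ram$ and $\delta := \ord_Q \pi^* \Delta$.  On the DVR $S_Q$ we have $K_{S_Q} = 0$, so Grothendieck duality for the finite Frobenius gives
\[
\Hom_{S_Q}\bigl(F^e_* S_Q(B), S_Q(\Ram)\bigr) \cong F^e_* S_Q(p^e \Ram - B),
\]
sitting inside $\Hom_{S_Q}(F^e_* S_Q, S_Q(\Ram)) \cong F^e_* S_Q(p^e \Ram)$ under precomposition with $F^e_* S_Q \hookrightarrow F^e_* S_Q(B)$.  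Inside the latter, the submodule $\Hom_{S_Q}(F^e_* S_Q, S_Q) \cong F^e_* S_Q$ corresponds to the tautological inclusion $F^e_* S_Q \hookrightarrow F^e_* S_Q(p^e \Ram)$. Hence $\psi_0|_{F^e_* S_Q}$ lands in $S_Q$ precisely when the divisor inequality $\ord_Q B \geq p^e r$ holds.

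Finally, since $\Ram$ is an integral divisor and the hypothesis $\pi^* \Delta - \Ram \geq 0$ gives $\delta \geq r$, we obtain
\[
\ord_Q B - p^e r = \lfloor p^e \delta \rfloor + r - p^e r \geq p^e r + r - p^e r = r \geq 0,
\]
which concludes the codimension-one check and hence, by reflexivity, the proof.  The only real technical obstacle is the Grothendieck-duality bookkeeping at the DVR; once the correct divisor is identified, the hypothesis $\pi^* \Delta \geq \Ram$ together with the integrality of $\Ram$ is tailor-made to supply the required inequality, and indeed the \emph{strict} slack $\lfloor p^e \delta \rfloor - (p^e-1)r \geq r$ is exactly what distinguishes this statement from the one in \autoref{lem.TraceComposedWithRamMaps}.
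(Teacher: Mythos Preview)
Your proof is correct and follows the same overall strategy as the paper's: use Hom--tensor adjunction to produce an $S$-linear factor $\psi_0 : F^e_* S(B) \to S(\Ram)$ with $\vartheta = \Tr \circ \psi_0$, then verify that $\psi_0|_{F^e_*S}$ lands in $S$ via the numerical inequality $\lfloor p^e \pi^*\Delta\rfloor \geq (p^e-1)\Ram$.

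The packaging differs in two minor ways. First, you apply the adjunction directly with $B = \lfloor p^e\pi^*\Delta\rfloor + \Ram$, whereas the paper first invokes \autoref{lem.TraceComposedWithRamMaps} with the smaller divisor $\Ram$ and then spends a separate claim (using the $D_\varphi$-calculus of \autoref{lem.CompositionOfMapsAndDivisors}) to upgrade $\psi$ to $\Hom_S\bigl(F^e_*S(B),S(\Ram)\bigr)$; your route is more economical here. Second, for the containment $\psi_0(F^e_*S)\subseteq S$ the paper twists by $-\Ram$ via the projection formula to land in $\Hom_S\bigl(F^e_*S(\lfloor p^e\pi^*\Delta\rfloor+(1-p^e)\Ram),S\bigr)$ and then restricts, while you check the same thing by a direct Grothendieck-duality computation at each height-one prime. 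Both reduce to the identical inequality $\lfloor p^e\delta\rfloor \geq (p^e-1)r$; your DVR argument is self-contained, while the paper's twist argument feeds more readily into the subsequent proof of \autoref{thm.formula_signature_w/Deltas}, where the intermediate map $\psi : F^e_*S(\lfloor p^e\Delta_Y\rfloor+\Ram)\to S$ is used explicitly.
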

\begin{proof}
Obviously $\Hom_R\bigl(F^e_* S(  \lfloor p^e \pi^* \Delta \rfloor +  \Ram), R\bigr) \subseteq \Hom_R\bigl(F^e_* S(\Ram), R\bigr)$ and so we can obtain a map
\[
\psi \in \Hom_S\bigl(F^e_* S(\Ram), S(\Ram)\bigr)
\]
with $\vartheta = \Tr \circ \psi$ by \autoref{lem.TraceComposedWithRamMaps}.

\begin{claim}
\label{clm.FactoringPsiContainment}
$\psi$ is contained in
\[
\Hom_S\bigl(F^e_* S(  \lfloor p^e \pi^* \Delta \rfloor  + \Ram), S(\Ram)\bigr) \subseteq \Hom_S\bigl(F^e_* S(\Ram), S(\Ram)\bigr).
\]
\end{claim}
\begin{proof}[Proof of claim]
Viewing $\vartheta$ as a section of $\Hom_R\bigl(F^e_* S(\Ram), R\bigr)$, we obtain a divisor $D_{\vartheta}$ with
\begin{align*}
 F^e_* S(D_{\vartheta}) \cong \Hom_R\bigl(F^e_* S(\Ram), R\bigr) \cong  \Hom_S\bigl(F^e_* S(\Ram), S(\Ram)\bigr) \cong & F^e_* S\bigl((1-p^e)(K_S - \Ram)\bigr) \\
= & F^e_* S\bigl((1-p^e)\pi^* K_R\bigr).
\end{align*}
and hence $D_{\vartheta} \sim (1-p^e) \pi^* K_R$.  Furthermore, by construction $D_{\vartheta} \geq \lfloor p^e \pi^* \Delta \rfloor$.  We have the composition
\[
\xymatrix{
F^e_* S(\Ram) \ar@/_2pc/[rr]_-{\vartheta} \ar[r]^-{\psi} & S(\Ram) \ar[r]^-{\Tr} & R.
}
\]
Since in this diagram, $\Tr : S(\Ram) \to R$ corresponds to the zero divisor, we see that $\psi$ corresponds to the same divisor as $\vartheta$ by \autoref{lem.CompositionOfMapsAndDivisors}.  In other words $\psi \in \Hom_S\bigl(F^e_* S(\Ram), S(\Ram)\bigr)$ yields the divisor $D_{\psi} \geq \lfloor p^e \pi^* \Delta \rfloor$.  This just means that
\[
\psi \in \Hom_S\bigl(F^e_* S(  \lfloor p^e \pi^* \Delta \rfloor  + \Ram), S(\Ram)\bigr).
\]
This proves the claim.
\end{proof}
By twisting and using the projection formula, we view $\psi$ as an element contained in $\Hom_S\bigl(F^e_* S( \lfloor p^e \pi^* \Delta \rfloor + (1-p^e) \Ram), S\bigr)$ and so by restriction, we obtain
\[
\psi : F^e_* S\bigl( \lfloor \pi^* \Delta + (p^e - 1) (\pi^* \Delta - \Ram)\rfloor \bigr) \to S.
\]
Further restricting to $F^e_* S$ (since $\pi^* \Delta - \Ram$ is effective) proves the lemma.
\end{proof}


\begin{theorem}
\label{thm.formula_signature_w/Deltas}
Suppose that $(R, \fram, k) \subseteq (S, \fran, \ell)$ is a finite separable extension of normal local domains with ramification / relative canonical divisor $\Ram$.  Let $\pi : Y=\Spec S  \to X = \Spec R$ be the corresponding morphism of schemes.  Suppose that $\Delta_X \geq 0$ is a $\bQ$-divisor on $X$ such that $\Delta_Y = \pi^* \Delta_X - \Ram \geq 0$. Let $f$ be the maximal number of direct $\Tr$-summands of $S$, then
\[
f \cdot s(S, \Delta_Y)= [L:K] \cdot s(R,\Delta_X).
\]
\end{theorem}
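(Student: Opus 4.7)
The plan is to extend the argument of Theorem \ref{thm.formula_signature} by replacing the \etInCdOne{} hypothesis with the pair structure, which now encodes codimension-one ramification through the divisors $\Delta_Y = \pi^* \Delta_X - \Ram$ and the explicit $\Ram$-twists appearing in the Hom-sets. Fix $e \geq 0$. By the second assertion of Lemma \ref{lem.FSigLimitWithGs} applied with $D = \Ram$, one computes $s(S, \Delta_Y) = \lim_e a_e^Y / p^{e(d + \alpha(S))}$, where $a_e^Y$ is the maximal number of free $S$-summands of $F^e_* S$ whose projections lie in $\Hom_S\bigl(F^e_* S(\lfloor p^e \Delta_Y \rfloor + \Ram), S\bigr)$. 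Let $b_e$ denote the analogous maximal number of free $R$-summands of $F^e_* S$ whose projections lie in $\Hom_R\bigl(F^e_* S(\lfloor p^e \pi^* \Delta_X \rfloor + \Ram), R\bigr)$. I would prove the theorem by establishing two claims: (a) $b_e = f \cdot a_e^Y$ for every $e$, and (b) $\lim_e b_e / p^{e(d + \alpha(R))} = [L:K] \cdot s(R, \Delta_X)$. Since $\alpha(R) = \alpha(S)$ (by separability of $k \subseteq \ell$, the pair analog of Lemma \ref{lem.SepResFields} indicated in the remark preceding this section; one may reduce to the case when both $F$-signatures are positive), the two claims combine to yield the formula.

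For (a), choose a decomposition $S = R^{\oplus f} \oplus M$ of $R$-modules realizing $f$ $\Tr$-summands, and a maximal $S$-decomposition $F^e_* S = S^{\oplus a_e^Y} \oplus N$ realizing $a_e^Y$. These combine to $F^e_* S = R^{\oplus f \cdot a_e^Y} \oplus M^{\oplus a_e^Y} \oplus N$ as $R$-modules. The inequality $b_e \geq f \cdot a_e^Y$ follows from Lemma \ref{lem.CompositionOfMapsAndDivisors}: each resulting composition $F^e_* S \to S \to R$, of an $S$-projection $\psi$ with a $\Tr$-summand projection $\Tr(s \cdot \blank)$, has divisor $D_\psi + \pi^* D_{\Tr(s \cdot \blank)}$ placing it in the prescribed $R$-side Hom-set. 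For the reverse inequality $b_e \leq f \cdot a_e^Y$, the crucial input is Lemma \ref{lem.TraceComposedWithRamMapsAndDeltas}: every projection $\vartheta$ in the $R$-side set factors as $\Tr \circ \psi$ with $\psi \in \Hom_S\bigl(F^e_* S(\lfloor p^e \Delta_Y \rfloor + \Ram), S\bigr)$. Using $\Tr(\fran) \subseteq \fram$ (Lemma \ref{lem.Note_Trace}) and arguing as in the proof of Theorem \ref{thm.formula_signature}, any free $R$-summand of $F^e_* S$ with such a projection induces a free $S$-summand of $F^e_* S$ with projection in the $S$-side set, and the multiplicity is exactly $f$, matching the number of $\Tr$-summands of $S$ over $R$.

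For (b), I would recast $b_e$ as a colength computation, paralleling the strategy behind Lemma \ref{lem.FSigLimitWithGs} but applied to the $R$-module $F^e_* S$ rather than to a ring. Via reflexivity, the $R$-side Hom-set is identified with a reflexive sheaf on $\Spec S$ whose sections at height-one primes of $R$ --- where the ``$+\Ram$'' precisely cancels the codimension-one ramification --- match $[L:K]$ copies of those for the $R$-side pair $(R, \Delta_X)$. The c-trick of Lemma \ref{lem.CTrick} applied in this module setting then yields the desired limit, in the spirit of \cite[Theorem 4.11]{TuckerFSigExists}. The main obstacle will be step (b): rigorously executing the codimension-one matching in the module setting, properly tracking how $\pi^* \Delta_X$ and $\Ram$ transform under localization, and verifying that the c-trick applies to the appropriate finite-colength quotients of $R$-modules built from $F^e_* S$.
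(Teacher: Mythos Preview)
Your plan is essentially the paper's own proof: the two claims (a) and (b) are exactly Lemmas \ref{lem.FeStoRistraceSumandsTimessYDelta} and \ref{lem.FeStoRisGenRankTimesSRDelta}, built around the same auxiliary object $\sD_e = \Hom_R\bigl(F^e_* S(\lfloor p^e \pi^* \Delta_X \rfloor + \Ram), R\bigr)$, and you correctly identify the key inputs (Lemma \ref{lem.TraceComposedWithRamMapsAndDeltas} for the factorization step in (a), Lemma \ref{lem.CTrick} and \cite[Theorem 4.11]{TuckerFSigExists} for (b)).

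Two points to tighten. First, you do not treat the degenerate case $\Tr(S) \subsetneq R$, where $f = 0$; the paper handles this separately by observing $\tau(R,\Delta_X) = \Tr\bigl(\tau(S,\Delta_Y)\bigr) \subseteq \Tr(S) \subsetneq R$, so $s(R,\Delta_X)=0$ and the identity is $0=0$. Your parenthetical ``reduce to the case when both $F$-signatures are positive'' needs this argument (and also the reduction to $\lfloor \Delta_X \rfloor = 0$, required for Lemma \ref{lem.FSigLimitWithGs}). Second, your sketch of (b) is on target but the actual content is a specific chain of inclusions: the paper first rewrites $[L:K]\cdot s(R,\Delta_X) = \lim_e p^{-de}\lambda_R\bigl(S/I_e^{\sG}S\bigr)$ via Hilbert--Kunz multiplicity, and then proves $I_e^{\sG}S \subseteq J_e^{\sD} \subseteq \bigl(I_e^{\sG}S : cb^2\bigr)$ for explicit $b,c \in R$ (with $\Div_S(b) \geq \Ram$ and $c$ coming from a free submodule $R^{\oplus [L:K]} \subseteq S$) before invoking Lemma \ref{lem.CTrick}. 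As a side remark, $\alpha(R)=\alpha(S)$ holds for any finite extension of $F$-finite local domains and does not depend on separability of $k \subseteq \ell$.
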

\begin{proof}
First note that if $\Tr(S) \subsetneq R$, then $f = 0$.  Furthermore, in this case, by \cite[Corollary 6.26]{SchwedeTuckerTestIdealFiniteMaps} $\tau(R, \Delta_X) = \Tr(\tau(S, \Delta_Y)) \subseteq \Tr(S) \subsetneq R$ and so $(R, \Delta_X)$ is not strongly $F$-regular and $s(R, \Delta_X) = 0$.  Thus the equality holds.  We may henceforth assume that $\Tr(S) = R$ so that $(S, \Delta_Y)$ is strongly $F$-regular if and only if $s(R, \Delta_X)$ is by \cite[Corollary 6.31]{SchwedeTuckerTestIdealFiniteMaps}.  Hence one side is zero if and only if the other is,  and so we may assume that $s(R, \Delta_X), s(S, \Delta_Y) > 0$.
Finally, again note that since $\Tr(S) = R$, $f = [\ell : k]$ by \autoref{lem.TrSurjectsImpliesTrSummandsIslk}.  We additionally remark that we can also assume without lost of generality that $\lfloor \Delta_X \rfloor =0$, otherwise the equality we plan to show would become trivially $0=0$.

Recall that by \cite[Lemma 3.6]{BlickleSchwedeTuckerFSigPairs1} one has that
\[
[\ell:k] = f= \lambda_R \bigr(S/\langle s\in S \mid \rho(s) \in \mathfrak{m}, \text{ for all } \rho \in \Tr \cdot S \rangle \bigl) = \lambda_R \bigr(\Tr \cdot S/\langle \rho \in \Tr\cdot S \mid \rho(S) \subseteq \mathfrak{m} \rangle \bigl).
\]

As in \autoref{subsec.FSig}, write
\[
\sG_e^{\Delta_X} = \Hom_R\bigl(F_*^e R(\lfloor p^e \Delta_X \rfloor ), R\bigr) \subseteq \Hom_R(F_*^e R, R) = \mathscr{C}_e^R
\]
and set $I_e^{\sG} = \{ r \in R \;|\; \varphi(F_*^e r) \subseteq \fram, \text{ for all } \varphi \in \sG_e^{\Delta_X}\}$.
This set $\sG_e^{\Delta_X}$ corresponds to the maps $\varphi \in \mathscr{C}_e^R$ which factors through a map $F_*^e R(\lfloor p^e \Delta_X \rfloor)\to R$ via the natural inclusion $R \subseteq R(\lfloor p^e \Delta_X \rfloor)$.
One defines and interprets $\mathscr{G}_e^{\Delta_Y}$ in the same fashion.

In order to recover an appropriate analog of \cite[Theorem 4.11]{TuckerFSigExists} we introduce
\[
\mathscr{D}_e:= \Hom_R\bigl(F_*^e S\bigl(\lfloor p^e \pi^*\Delta_X \rfloor + \Ram\bigr), R\bigr) \subseteq \Hom_R(F_*^e S, R).
\]
Let $J_e^{\sD}$ be the $R$-submodule of $S$
\[
J_e^{\sD}:= \bigl\{s\in S\mid \vartheta (F^e_* s) \in \mathfrak{m}, \text{ for all } \vartheta \in \mathscr{D}_e\bigr\}
\]
and write
\[
b_e:= \lambda_R\Bigl(F_*^e\bigl(S/J_e^{\sD}\bigr) \Bigr)=p^{e\alpha(R)} \lambda_R \bigl(S/J_e^{\sD}\bigr).
\]
The result now follows immediately from the two lemmas, \autoref{lem.FeStoRisGenRankTimesSRDelta} and \autoref{lem.FeStoRistraceSumandsTimessYDelta} below.
\end{proof}

\begin{lemma}
\label{lem.FeStoRisGenRankTimesSRDelta}
With notation as in the proof of \autoref{thm.formula_signature_w/Deltas},
\[
\lim_{e \rightarrow \infty} \frac{b_e}{p^{e(d+\alpha(R))}}=[L:K]\cdot s(R, \Delta_X).
\]
\end{lemma}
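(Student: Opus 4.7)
Using \autoref{eq.NumberOfSummandsInDifferentGuises} and \autoref{lem.FSigLimitWithGs}, we have $s(R, \Delta_X) = \lim_{e\to\infty} \lambda_R(R/I_e^{\sG^{\Delta_X}})/p^{ed}$; combined with $b_e = p^{e\alpha(R)}\lambda_R(S/J_e^{\sD})$, the claim reduces to
\[
\lim_{e\to\infty} \frac{\lambda_R(S/J_e^{\sD})}{p^{ed}} \;=\; [L:K] \cdot \lim_{e\to\infty} \frac{\lambda_R(R/I_e^{\sG^{\Delta_X}})}{p^{ed}}.
\]
The factor $[L:K]$ must come from the generic $R$-rank of $S$, so the plan is to approximate $S$ by a free $R$-submodule of rank $n := [L:K]$ and use the $c$-trick to handle the discrepancy.

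Concretely, choose a $K$-basis $s_1,\ldots,s_n$ of $L$ inside $S$, so that $T := \bigoplus_i R\cdot s_i$ is a free $R$-submodule of rank $n$ with $cS \subseteq T$ for some $0\neq c\in R$ (hence $S/T$ is finitely generated of Krull dimension $\leq d-1$). Using the non-degeneracy of the trace pairing on $L/K$, fix a dual basis $t_1,\ldots,t_n \in L$ with $\Tr(s_i t_j) = \delta_{ij}$, and choose $0 \neq d \in R$ with all $dt_j \in S$. I would then establish the sandwich
\[
\bigoplus_i I_e^{\sG^{\Delta_X}} \cdot s_i \;\subseteq\; J_e^{\sD}\cap T \;\subseteq\; \bigoplus_i \bigl(I_e^{\sG^{\Delta_X}} : d\bigr)\cdot s_i
\]
inside $T$ as follows. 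For the second inclusion, I would produce, for each $\varphi \in \sG_e^{\Delta_X}$ and each $j$, the element of $\mathscr{D}_e$ (or of a bounded enlargement, tolerated by the perturbation clause in \autoref{lem.FSigLimitWithGs})
\[
\vartheta_{\varphi,j}(F^e_* s) := \varphi\bigl(F^e_*\Tr(dt_j\cdot s)\bigr),
\]
which satisfies $\vartheta_{\varphi,j}(F^e_*(rs_i)) = \delta_{ij}\,\varphi(F^e_*(dr))$ for $r \in R$, so $rs_i \in J_e^{\sD}$ forces $dr \in I_e^{\sG^{\Delta_X}}$. For the first inclusion, I would exploit the $S$-module structure of $\mathscr{D}_e$: for $r \in I_e^{\sG^{\Delta_X}}$ and any $\vartheta \in \mathscr{D}_e$, the element $s_i \cdot \vartheta \in \mathscr{D}_e$ restricts, via the inclusion $F^e_* R(\lfloor p^e\Delta_X\rfloor) \hookrightarrow F^e_* S(\lfloor p^e\pi^*\Delta_X\rfloor + \Ram)$, to an element of $\sG_e^{\Delta_X}$, so $\vartheta(F^e_*(rs_i)) = (s_i\cdot \vartheta)(F^e_* r) \in \fram$.

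With the sandwich in place, \autoref{lem.CTrick} applied to $\bigoplus_i I_e^{\sG^{\Delta_X}} s_i \subseteq J_e^{\sD}\cap T$ yields $\lim_e \lambda_R(T/(J_e^{\sD}\cap T))/p^{ed} = n\cdot \lim_e \lambda_R(R/I_e^{\sG^{\Delta_X}})/p^{ed}$. To conclude, use the short exact sequence $0 \to T/(J_e^{\sD}\cap T) \to S/J_e^{\sD} \to S/(T + J_e^{\sD}) \to 0$ of $R$-modules, together with the bound $\lambda_R(S/(T+J_e^{\sD})) \leq \lambda_R\bigl((S/T)/\fram^{[p^e]}(S/T)\bigr) = O(p^{e(d-1)})$, which follows from $\fram^{[p^e]} \subseteq I_e^{\sG^{\Delta_X}} \subseteq J_e^{\sD}$ and the Hilbert--Samuel estimate on the finitely generated $R$-module $S/T$ of Krull dimension $\leq d-1$. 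Dividing by $p^{ed}$ and taking the limit yields the desired equality. The main obstacle will be the careful verification that the spreading maps $\vartheta_{\varphi,j}$ actually land in $\mathscr{D}_e$ (or in a bounded enlargement independent of $e$): this requires tracking how $\Tr$ interacts with the floored divisor $\lfloor p^e\pi^*\Delta_X\rfloor + \Ram$, using that the discrepancy $\lfloor p^e\pi^*\Delta_X\rfloor - \pi^*\lfloor p^e\Delta_X\rfloor$ is dominated by a fixed effective divisor independent of $e$.
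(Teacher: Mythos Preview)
Your approach is correct and essentially parallel to the paper's, though organized differently. The paper first invokes Hilbert--Kunz multiplicity machinery (\cite[Corollary 3.7]{TuckerFSigExists}, \cite[Lemma 1.3]{MonskyHKFunction}) to obtain $[L:K]\cdot s(R,\Delta_X)=\lim_e \lambda_R(S/I_e^{\sG}S)/p^{ed}$ directly, and then sandwiches $I_e^{\sG}S \subseteq J_e^{\sD} \subseteq (I_e^{\sG}S:cb^2)$ for a single constant $cb^2\in R$, applying \autoref{lem.CTrick} once. You instead bypass the Hilbert--Kunz step by working inside the free submodule $T$ and handling $S/T$ via a separate $O(p^{e(d-1)})$ bound; this is more elementary but requires one extra estimate. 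Both approaches rest on the same divisorial input, namely that $\lfloor p^e\pi^*\Delta_X\rfloor - \pi^*\lfloor p^e\Delta_X\rfloor$ is bounded by $\Ram$ independently of $e$ (which is exactly the footnote computation in the paper's proof).

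One small correction: your parenthetical about $\vartheta_{\varphi,j}$ possibly landing only in a ``bounded enlargement'' of $\mathscr{D}_e$ points in the wrong direction. An enlargement of $\mathscr{D}_e$ would \emph{shrink} $J_e^{\sD}$, so you could not test membership of an element of $J_e^{\sD}$ against it. The clean fix is to absorb the discrepancy into $d$: since $\lfloor p^e\pi^*\Delta_X\rfloor - \pi^*\lfloor p^e\Delta_X\rfloor \leq \Ram$ for all $e$, and since $\Tr\bigl(S(\Ram+\pi^*D')\bigr)\subseteq R(D')$ for any integral $D'$, it suffices to choose $0\neq d\in R$ with $\Div_S(dt_j)\geq \Ram$ for each $j$ (a fixed condition, achievable since $\Ram$ is supported on finitely many primes). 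With this choice $\vartheta_{\varphi,j}\in\mathscr{D}_e$ exactly, and your sandwich goes through without any perturbation. This is the same role played by the element $b$ with $\Div_S(b)\geq\Ram$ in the paper's argument.
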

\begin{proof}
The proof of this is, \emph{mutatis mutandis}, the same as the proof of \cite[Theorem 4.11]{TuckerFSigExists}. Indeed, one has the following equalities,
\begin{align*}
[L:K] \cdot s(R, \Delta_X)= \rank_R(S) \cdot s(R, \Delta_X)
  &= \rank_R(S) \cdot \lim_{e \rightarrow \infty}{\frac{1}{p^{de}}e_{\text{HK}}\bigl(I_e^{\sG};R\bigr)}\\
  &= \lim_{e \rightarrow \infty}{\frac{1}{p^{de}}e_{\text{HK}}\bigl(I_e^{\sG};S\bigr)}\\
  &= \lim_{e \rightarrow \infty}{\frac{1}{p^{de}} \lambda_R\bigl(S/I_e^{\sG }S\bigr)}.
\end{align*}
The second equality holds in virtue of \cite[Corollary 3.7]{TuckerFSigExists} \cf \cite[Corollary 3.17]{BlickleSchwedeTuckerFSigPairs1} and \autoref{lem.FSigLimitWithGs}. The next equality follows for example from \cite[Lemma 1.3]{MonskyHKFunction}.  The last equality still follows from \cite[Corollary 3.7]{TuckerFSigExists}.

It only remains to verify that
\begin{equation}
\label{eq.EqualityOfLimitsJvsI}
\lim_{e \rightarrow \infty}{\frac{1}{p^{de}} \lambda_R\bigl(S/I_e^{\sG}S\bigr)} = \lim_{e \rightarrow \infty}{\frac{1}{p^{de}} \lambda_R\bigl(S/J_e^{\sD}\bigr)}.
\end{equation}
For this purpose we utilize \autoref{lem.CTrick}. First observe that
\[
R\bigl(\lfloor p^e\Delta_X \rfloor\bigr)\subseteq S\big(\pi^* \lfloor p^e \Delta_X \rfloor \big) \subseteq S\bigl(\lfloor p^e \pi^* \Delta_X + \Ram \rfloor\bigr).
\]
For any $r s \in I_e^{\sG} S$ and any $\vartheta \in \sD_e$, write $\vartheta'(F^e_* \blank) = \vartheta(F^e_* s \blank)$.  Then $\varphi' := \vartheta'|_{R\bigl(\lfloor p^e\Delta_X \rfloor\bigr)} \in \sG_e^{\Delta_X}$ so that $\vartheta(F^e_* rs) = \varphi'(F^e_* r) \in \fram$.  Hence
\begin{equation}
\label{eq.FirstContainment}
I_e^{\sG} S \subseteq J_e^{\sD}.
\end{equation}

Now let $0 \neq b \in R$ be such that $\Div_S(b) \geq \Ram$.  Then observe that for any $\bQ$-divisor $D$ on $\Spec R$, $\Div_S(b) \geq \Ram \geq \lfloor \pi^* D \rfloor - \pi^* \lfloor D \rfloor$.\footnote{To see this work locally with a separable extension of DVRs $R \subseteq S$ with uniformizers $r$ and $s$ such that $r = u s^n$.  Observe if $D = \lambda \Div_R(r)$, then
\[
\lfloor \pi^* D \rfloor \leq \pi^* D = \pi^* \{ \lambda \} \Div_R(r) + \pi^* \lfloor D \rfloor = n \{ \lambda \} \Div_S(s) + \pi^* \lfloor D \rfloor
\]
which implies that $\lfloor \pi^* D \rfloor \leq \lfloor n \{ \lambda \} \Div_S(s) + \pi^* \lfloor D \rfloor$ and so and $\lfloor n \{ \lambda \} \Div_S(s) \rfloor \leq \Ram$.
   }  It follows that
\begin{equation}
\label{eq.b2GetsRidOfRam}
b^2 S\bigl(\lfloor p^e \pi^* \Delta_X \rfloor + \Ram\bigr) \subseteq S\bigl(\pi^* \lfloor p^e \Delta_X \rfloor\bigr).
\end{equation}
Next choose a free module $G = R^{\oplus [L : K]} \subseteq S$ and a $0 \neq c \in R$ such that $c S \subseteq G$.  It follows that
\begin{equation}
\label{eq.cMultipliesDivisors}
cS\bigl(\lfloor p^e \pi^* \Delta_X \rfloor \bigr) = \bigl(c S \otimes_R R(\lfloor p^e \Delta_X \rfloor)\bigr)^{**}
\subseteq \bigl(G \otimes_R R(\lfloor p^e \Delta_X \rfloor)\bigr)^{**}
= \bigl(R(\lfloor p^e \Delta_X \rfloor)\bigr)^{\oplus [L : K]}
\end{equation}
where $\blank^{**}$ denotes reflexification as an $R$-module (or equivalently, since it can be viewed as S2-ification, as an $S$-module where appropriate).  Note the equalities and containments can be checked in codimension 1 where they are obvious.  Putting \autoref{eq.b2GetsRidOfRam} and \autoref{eq.cMultipliesDivisors} together yields:
\[
c b^2 S\bigl(\lfloor p^e \pi^* \Delta_X \rfloor + \Ram \bigr) \subseteq \bigl(R(\lfloor p^e \Delta_X \rfloor)\bigr)^{\oplus [L : K]}.
\]
Now choose $y \in J^{\sD}_e$ and
\[
\varphi \in \Hom_R\Bigl(F^e_* R\bigl(\lfloor p^e \Delta_X \rfloor \bigr)^{\oplus [L : K]}, R\Bigr) \subseteq \Hom_R(F^e_* G, R).
\]
Then, viewing things at the level of the field of fractions, $\varphi(F^e_* c b^2 \cdot \blank) \in \sD_e$ and so $\varphi(F^e_*c b^2 y) \in \fram$.  It follows that
\[
I^{\sG}_e S \subseteq J^{\sD}_e \subseteq \bigr(I^{\sG}_e S: c b^2\bigl)
\]
where the first containment was shown in \autoref{eq.FirstContainment}.  Now \autoref{eq.EqualityOfLimitsJvsI} follows from \autoref{lem.CTrick}.
\end{proof}

\begin{lemma}
\label{lem.FeStoRistraceSumandsTimessYDelta}
With notation as in the proof of \autoref{thm.formula_signature_w/Deltas},
\[
\lim_{e \rightarrow \infty} \frac{b_e}{p^{e(d+\alpha(R))}} = f \cdot s(S, \Delta_Y).
\]
\end{lemma}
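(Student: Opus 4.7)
The strategy is to identify $J_e^{\sD}$ with the analogous $F$-signature ideal for an auxiliary collection of maps on $S$, and then apply the last assertion of \autoref{lem.FSigLimitWithGs} with $S$ in place of $R$. Set
\[
\sG'_e := \Hom_S\bigl(F^e_* S(\lfloor p^e \Delta_Y \rfloor + \Ram), S\bigr), \qquad I_e^{\sG'} := \bigl\{s \in S \mid \psi(F^e_* s) \in \fran \text{ for all } \psi \in \sG'_e\bigr\}.
\]
In the nontrivial case $s(R,\Delta_X) > 0$, one has $\lfloor \Delta_Y\rfloor = 0$ (as $(R,\Delta_X)$ strongly $F$-regular implies $(S,\Delta_Y)$ strongly $F$-regular by the trace/test-ideal formalism), so \autoref{lem.FSigLimitWithGs} applied to $S$ with $D = \Ram$ yields $\lim_{e \to \infty} \lambda_S(S/I_e^{\sG'})/p^{ed} = s(S, \Delta_Y)$. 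The heart of the proof is then the identity $J_e^{\sD} = I_e^{\sG'}$ of ideals of $S$.

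I first set up a natural $S$-equivariant bijection $\sG'_e \leftrightarrow \sD_e$ under which $\psi \in \sG'_e$ corresponds to $\vartheta := \Tr \circ \psi \in \sD_e$. This follows from Hom-tensor adjunction combined with the identification $\Hom_R(S,R) \cong S(\Ram)$ (generated by $\Tr$) and the projection formula for Frobenius---the same computation used inside the proof of \autoref{lem.TraceComposedWithRamMapsAndDeltas}. The point is that $\sG'_e$ carries an $S$-module structure by post-multiplication, and for every $x \in S$ and $\psi \in \sG'_e$ the map $x\psi \in \sG'_e$ corresponds under the bijection to $\Tr\bigl(x \cdot \psi(\blank)\bigr) \in \sD_e$.

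The containment $I_e^{\sG'} \subseteq J_e^{\sD}$ is immediate from $\Tr(\fran) \subseteq \fram$ (\autoref{lem.Note_Trace}). For the reverse direction, take $s \in J_e^{\sD}$ and any $\psi \in \sG'_e$. Evaluating the $\sD_e$-maps $\Tr\bigl(x \cdot \psi(\blank)\bigr)$ at $F^e_* s$ produces $\Tr\bigl(x \cdot \psi(F^e_* s)\bigr) \in \fram$ for every $x \in S$. Write $y := \psi(F^e_* s)$. If $y$ were not in $\fran$ it would be a unit of the local ring $S$; then choosing $x_0 \in S$ with $\Tr(x_0) = 1$ (available because $\Tr : S \to R$ is surjective) and setting $x := x_0 y^{-1}$ would force $\Tr(xy) = \Tr(x_0) = 1 \notin \fram$, a contradiction. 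Hence $\psi(F^e_* s) \in \fran$, so $s \in I_e^{\sG'}$. This non-degeneracy-of-trace step is the principal obstacle and the only place where surjectivity of $\Tr$ is essential.

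Finally, since $J_e^{\sD} = I_e^{\sG'}$ is an $\fran$-primary $S$-ideal and $S/J_e^{\sD}$ has residue field $\ell$, one gets $\lambda_R(S/J_e^{\sD}) = [\ell:k] \cdot \lambda_S(S/I_e^{\sG'}) = f \cdot \lambda_S(S/I_e^{\sG'})$, using $f = [\ell:k]$ from the proof of \autoref{thm.formula_signature_w/Deltas}. Because $\ell/k$ is separable---the pair version of \autoref{lem.SepResFields} noted in the subsequent remark, valid here since $(R,\Delta_X)$ is strongly $F$-regular and $\pi^*\Delta_X - \Ram \geq 0$---we have $\alpha(R) = \alpha(S)$, and therefore
\[
\frac{b_e}{p^{e(d+\alpha(R))}} \;=\; \frac{\lambda_R(S/J_e^{\sD})}{p^{ed}} \;=\; f \cdot \frac{\lambda_S(S/I_e^{\sG'})}{p^{ed}} \;\longrightarrow\; f \cdot s(S, \Delta_Y).
\]
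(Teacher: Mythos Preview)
Your proof is correct and follows essentially the same approach as the paper. Both arguments introduce the same auxiliary collection $\sG'_e = \Hom_S\bigl(F^e_* S(\lfloor p^e \Delta_Y\rfloor + \Ram), S\bigr)$ and exploit the same bijection $\sG'_e \leftrightarrow \sD_e$, $\psi \mapsto \Tr \circ \psi$, together with the two key inputs $\Tr(\fran) \subseteq \fram$ and surjectivity of $\Tr$. The only difference is packaging: the paper proves the summand-count identity $b_e = f \cdot a'_e$ by constructing and factoring free summands, whereas you prove the equivalent ideal equality $J_e^{\sD} = I_e^{\sG'}$ directly and then read off the length relation; via \autoref{eq.NumberOfSummandsInDifferentGuises} these are the same statement. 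One minor remark: your appeal to separability of $\ell/k$ to get $\alpha(R)=\alpha(S)$ is unnecessary here, since the $p^{e\alpha(R)}$ in $b_e$ cancels against the denominator before you ever need to compare $\alpha(R)$ with $\alpha(S)$.
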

\begin{proof} By applying the second part of \autoref{lem.FSigLimitWithGs}, we can compute $s(S, \Delta_Y)$ using
\[
\sG'_e={\sG'}_e^{\Delta_Y} = \Hom_R \bigl(F_*^eS(\lfloor p^e \Delta_Y\rfloor+\Ram), S\bigr),
\]
with the associated splitting numbers $a'_e = a_e^{\sG'} $.

We are going to prove that the following equality holds: $b_e=f\cdot a'_e$.  Taking limits will then prove the lemma.  We prove first that $b_e \geq f \cdot a'_e$. This can be verified by observing that the composition of an element $\psi \in \sG'_e$ with a map $\rho \in \Tr \cdot S$ gives an element $\vartheta:=\rho \circ \psi$ in $\sD_e$.  Indeed, $D_{\psi} \geq \lfloor p^e \Delta_Y \rfloor + \Ram$ and $D_{\rho} \geq \Ram$ and so by \autoref{lem.CompositionOfMapsAndDivisors}, we see that
\[
D_{\vartheta} \geq \bigl(\lfloor p^e \Delta_Y \rfloor + \Ram\bigr) + p^e \Ram = \Bigl(\lfloor p^e \pi^* \Delta_X - p^e \Ram \rfloor +p^e \Ram \Bigr) + \Ram  = \lfloor p^e \pi^* \Delta_X \rfloor + \Ram.
\]
It follows that $\vartheta \in \sD_e$ as claimed.  We can certainly construct $f \cdot a'_e$ distinct $R$-summands by such compositions and so $b_e \geq f \cdot a'_e$.


Now we prove that $b_e \leq f \cdot a'_e$.
Indeed, given any $\vartheta \in \sD_e$, \autoref{lem.TraceComposedWithRamMapsAndDeltas} provides a factorization $\vartheta=\Tr \circ \psi$ where $\psi$ belongs to $\Hom_S\bigl(F^e_* S(  \lfloor p^e \pi^* \Delta_X \rfloor  + \Ram), S(\Ram)\bigr)$ by \autoref{clm.FactoringPsiContainment}. However, as observed above $\lfloor p^e\Delta_Y \rfloor + (p^e+1) \Ram = \lfloor p^e \pi^* \Delta_X \rfloor + \Ram$, whence $\psi$ restricts by twisting by $S(\Ram)$ to a map $\psi: F_*^eS(\lfloor p^e\Delta_Y \rfloor + \Ram) \to S$, which is by definition an element of $\sG'_e$. Next note that \autoref{lem.Note_Trace} ensures that $\psi$ is surjective whenever $\vartheta$ is.  But now, if $N \subseteq F^e_* S$ is an $S$-module summand of $F^e_* S$ with no free $S$-summands whose projection maps are in $\sG_e'$, then $N$ has no free $R$-summands with corresponding projections in $\sD_e$.  The result follows.
\end{proof}

\begin{corollary}
\label{cor.BoundOnDegreeKeepDivisorEffective}
Suppose $(R, \fram)$ is a strictly Henselian $F$-finite normal local domain.  If $\Delta \geq 0$ is a $\bQ$-divisor on $\Spec R$ such that $(R, \Delta)$ is strongly $F$-regular,  then $\lfloor {1 / s(R, \Delta)}\rfloor$ is an upper bound on the maximal generic rank of a finite separable local extension $(R, \fram) \subseteq (S, \fran)$ so that $\pi^* \Delta - \Ram$ is effective where $\pi : \Spec S \to \Spec R$ is the induced map.
\end{corollary}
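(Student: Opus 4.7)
The plan is to reduce everything to the master formula of \autoref{thm.formula_signature_w/Deltas}. Let $(R,\fram,k)\subseteq (S,\fran,\ell)$ be a finite separable local extension of normal domains with $\pi^*\Delta-\Ram$ effective, and write $\Delta_X=\Delta$, $\Delta_Y=\pi^*\Delta_X-\Ram\geq 0$, $K=K(R)$, $L=K(S)$. Let $f$ denote the maximal number of direct $\Tr$-summands of $S$. The theorem gives
\[
f\cdot s(S,\Delta_Y)=[L:K]\cdot s(R,\Delta_X).
\]
Since $0<s(S,\Delta_Y)\leq 1$ (using that $(R,\Delta_X)$ strongly $F$-regular forces $\Tr(S)=R$ as in the proof of \autoref{thm.formula_signature_w/Deltas}, hence $(S,\Delta_Y)$ is strongly $F$-regular), once I know that $f=1$ I immediately obtain
\[
[L:K]=\frac{s(S,\Delta_Y)}{s(R,\Delta_X)}\leq \frac{1}{s(R,\Delta_X)},
\]
and since $[L:K]$ is an integer this forces $[L:K]\leq \lfloor 1/s(R,\Delta_X)\rfloor$, which is exactly the desired bound.

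Thus the heart of the argument is showing $f=1$. Because $(R,\Delta_X)$ is strongly $F$-regular, the discussion at the start of the proof of \autoref{thm.formula_signature_w/Deltas} shows that $\Tr\colon S\to R$ is surjective; then \autoref{lem.TrSurjectsImpliesTrSummandsIslk} identifies $f=[\ell:k]$. So it suffices to show $\ell=k$. Since $R$ is strictly Henselian, the residue field $k$ is separably closed, so I only need to know $\ell/k$ is separable. This is precisely where the étale-in-codimension-one hypothesis of \autoref{lem.SepResFields} has to be replaced; the remark immediately following that lemma states that the same proof goes through verbatim under the hypothesis that $(R,\Delta_X)$ is strongly $F$-regular and $\pi^*\Delta_X-\Ram\geq 0$, which are exactly our hypotheses. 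Applying this, $\ell/k$ is separable, hence $\ell=k$, hence $f=1$.

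The only delicate point, and what I expect to be the main obstacle, is verifying that the remark after \autoref{lem.SepResFields} really goes through: the proof of that lemma extends a Frobenius splitting $\varphi_R$ along $R\subseteq S$ using étale-in-codimension-one; in the pair setting this extension step is replaced by the transposability of Cartier maps in $\sC^{\Delta_X}$ along $\pi$ provided $\pi^*\Delta_X-\Ram\geq 0$ (this is the content of \cite[Theorem 5.7]{SchwedeTuckerTestIdealFiniteMaps} and \autoref{lem.TraceComposedWithRamMapsAndDeltas}). Once this extension is in hand, \autoref{lem.Speyer} passes $\fran$-compatibility from $\varphi_S$ back to $\varphi_R$, and the residue-field computation of the original proof finishes separability of $\ell/k$. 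Everything else in the corollary is a direct read-off of \autoref{thm.formula_signature_w/Deltas} together with $s(S,\Delta_Y)\leq 1$.
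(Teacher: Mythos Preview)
Your proof is correct and follows the paper's approach: apply \autoref{thm.formula_signature_w/Deltas} together with the bound $s(S,\Delta_Y)\leq 1$ to conclude $[L:K]\leq 1/s(R,\Delta)$. You are in fact more careful than the paper in justifying $f=1$ (via the remark after \autoref{lem.SepResFields} and the strictly Henselian hypothesis); the paper's one-line proof simply writes $s(S,\Delta_S)=g\cdot s(R,\Delta)$ and leaves this step implicit.
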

\begin{proof}
Suppose that $(R, \fram) \subseteq (S, \fran)$ is a finite separable local extension such that $\Delta_S = \pi^* \Delta - \Ram$ of generic rank $g > \lfloor {1 / s(R)}\rfloor$.  Hence $g > {1 / s(R, \Delta)}$. But $s(S, \Delta_S) = g s(R, \Delta) > 1$ and the result follows.
\end{proof}

The following should be viewed as a characteristic $p > 0$ analog of \cite[Proposition 1]{XuFinitenessOfFundGroups}.  First however, recall that if $X$ is projective variety over $k = \overline{k}$, $\Delta$ is a $\bQ$-divisor and $\sL$ is an ample line bundle, then we can form the the section ring $R = \bigoplus_{n \geq 0} H^0(X, \sL^n)$.  We have a canonical $k^*$-bundle map $\Spec R \setminus V(R_{>0}) \to X$.  We can then pull back $\Delta$ from $X$ and obtain a unique corresponding divisor $\Delta_R$ on $\Spec R$.  See \cite[Section 5]{SchwedeSmithLogFanoVsGloballyFRegular}.
\begin{corollary}
\label{cor.CoverOfGloballyFRegPair}
Suppose that $(X, \Delta_X)$ is a globally $F$-regular projective variety over an algebraically closed field $k$.  Suppose $\sL$ is ample on $X$ and that $R = \bigoplus_{n \geq 0} H^0(X, \sL^n)$ with $\Delta_R$ the corresponding divisor on $R$.  Then ${1 / s(R, \Delta_R)}$ is an upper bound on the generic rank of a finite separable cover $\pi : Y \to X$ with $Y$ normal such that $\Delta_Y = \pi^* \Delta_X - \Ram_{Y/X}$ is effective.  In particular, if $\Delta = 0$, then ${1 / s(R)}$ is an upper bound on a finite separable \etale-in-codimension-$1$ cover of any open set $U \subseteq X$ whose complement has codimension $\geq 2$ in $X$.
\end{corollary}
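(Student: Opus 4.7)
The plan is to reduce to the local statement \autoref{cor.BoundOnDegreeKeepDivisorEffective} via the section-ring (cone) construction. So let $\pi\colon Y \to X$ be as in the statement, and form
\[
S \;=\; \bigoplus_{n \geq 0} H^0\bigl(Y, \pi^{*}\sL^{n}\bigr),
\]
together with the induced finite map of section rings $R \hookrightarrow S$. Because $Y$ is normal and $\pi$ is finite separable and dominant, $S$ is a normal graded domain which is a module-finite extension of $R$, with $[K(S):K(R)] = [K(Y):K(X)]$ equal to the generic rank of $\pi$.

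First I would verify the divisorial dictionary between the two sides. Using the $k^{*}$-bundle maps $\Spec R \setminus V(R_{>0}) \to X$ and $\Spec S \setminus V(S_{>0}) \to Y$ (\cf \cite[Section 5]{SchwedeSmithLogFanoVsGloballyFRegular}), the construction $\Delta \mapsto \Delta_{R}$ is compatible with pullback. Writing $\tau\colon \Spec S \to \Spec R$ for the induced morphism, this gives $\tau^{*}\Delta_{R} = (\pi^{*}\Delta_{X})_{S}$ outside the vertex, and since both sides are Weil divisors on a normal scheme this equality extends across the vertex. Similarly $\Ram_{\tau} = (\Ram_{\pi})_{S}$, because $\tau$ agrees with $\pi$ up to a $k^{*}$-factor on the complement of the vertex, and ramification divisors are determined in codimension one. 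Hence
\[
\tau^{*}\Delta_{R} - \Ram_{\tau} \;=\; (\Delta_{Y})_{S} \;\geq\; 0,
\]
since $\Delta_{Y}$ was assumed effective.

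Next I would Henselize at the unique homogeneous maximal ideals: let $(R^{h}, \fram^{h})$ be the Henselization of $R_{R_{>0}}$ and let $(S^{h}, \fran^{h})$ be obtained from $S$ analogously. Because $S$ is graded with a unique irrelevant homogeneous maximal ideal (here using that $Y$ is connected, hence that $S$ is a domain), we get a module-finite local extension $(R^{h}, \fram^{h}) \subseteq (S^{h}, \fran^{h})$ of $F$-finite normal local domains. Generic ranks, ramification divisors, and pullback divisors are all preserved by this localization and Henselization; moreover $F$-signature of the pair is preserved, so $(R^{h}, \Delta_{R^{h}})$ is strongly $F$-regular (by global $F$-regularity of $(X, \Delta_{X})$) and $s(R^{h}, \Delta_{R^{h}}) = s(R, \Delta_{R})$. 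Applying \autoref{cor.BoundOnDegreeKeepDivisorEffective} to this local extension yields
\[
[K(Y):K(X)] \;=\; [K(S^{h}):K(R^{h})] \;\leq\; 1/s(R, \Delta_{R}),
\]
which is the first assertion. The main technical obstacle is the divisorial bookkeeping in the previous paragraph — one must check, in codimension one on $\Spec S$, that the cone construction really translates $\Delta_{Y} = \pi^{*}\Delta_{X} - \Ram_{\pi}$ into $\tau^{*}\Delta_{R} - \Ram_{\tau}$, including at prime divisors lying over the vertex. (Those vertical divisors are unramified in $\tau$ since the cone map is locally a product with $\bG_{m}$ away from the vertex, so they contribute nothing to $\Ram_{\tau}$.)

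For the ``in particular'' clause, given a finite separable \etInCdOne{} cover $V \to U$ with $\codim_{X}(X \setminus U) \geq 2$, I would let $Y$ be the normalization of $X$ in $K(V)$; then $\pi\colon Y \to X$ is finite separable and \etInCdOne{} on all of $X$ (since any codimension-one branch locus would have to meet $U$), so $\Ram_{\pi} = 0$ as a divisor and $\Delta_{Y} = 0 \geq 0$. The first part then gives $[K(V):K(U)] = [K(Y):K(X)] \leq 1/s(R)$, as required.
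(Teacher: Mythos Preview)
Your proposal is correct and follows essentially the same approach as the paper: form the section ring $S$ of $(Y,\pi^*\sL)$, translate the divisor condition $\Delta_Y \geq 0$ into $\tau^*\Delta_R - \Ram_\tau \geq 0$ on the cone, pass to the (strict) Henselization at the irrelevant ideal, and invoke \autoref{cor.BoundOnDegreeKeepDivisorEffective}. You are in fact more explicit than the paper about the divisorial dictionary and about deducing the ``in particular'' clause via normalization of $X$ in $K(V)$.
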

\begin{proof}
Write $S = \bigoplus_{n \geq 0} H^0(Y, \pi^* \sL^n) = \bigoplus_{n \geq 0} H^0\bigl(X, (\pi_* \O_Y) \otimes \sL^n\bigr)$.  We have a graded finite inclusion $R \subseteq S$ with associated $\mu : \Spec S \to \Spec R$.  Since $(X, \Delta_X)$ is globally $F$-regular, $(R, \Delta_R)$ is strongly $F$-regular by \cite[Proposition 5.3]{SchwedeSmithLogFanoVsGloballyFRegular}.  Next observe that the generic rank of $\pi_* \O_Y$ over $\O_X$ is the same as the generic rank of $S$ over $R$ since if $\O_X^{\oplus m} \subseteq \pi_* \O_Y$ has torsion cokernel, so does the corresponding inclusion $R^{\oplus m} \subseteq S$.  Finally, observe that $\Ram_{S/R}$ simply corresponds to $\Ram_{Y/X}$ and so $\Delta_S = \mu^* \Delta_R - \Ram_{S/R}$ corresponds to $\Delta_Y$ and $\Delta_S$ is effective if and only if $\Delta_Y$ is.
The corollary now follows immediately by completion (or Henselization) and \autoref{cor.BoundOnDegreeKeepDivisorEffective}.
\end{proof}

\section{Finiteness of the fundamental group of a strongly $F$-regular singularity}
\label{FinitenessFundGroup}
In this section we prove our main result, namely; finiteness of the \'{e}tale fundamental group of a strongly $F$-regular singularity.
  Let $(R,\mathfrak{m}, k)$ be a normal $F$-finite and strongly $F$-regular strictly local domain of prime characteristic $p>0$. 
We will demonstrate finiteness of the \etale{} fundamental group of $U' \subseteq U:=\pSpec (R)$ where $U'$ is the complement of a closed subset $Z$ of $X:=\Spec(R)$ through $\fram$ of codimension at least $2$. In particular, since strongly $F$-regular rings are normal, the singular locus has codimension at least 2, and we can take $U'=U_{\text{reg}}$ as the regular locus. 

\begin{theorem}
\label{thm.FinitenessFundGroup}
Let $(R,\mathfrak{m}, k)$ be a normal $F$-finite and strongly $F$-regular strictly local domain of prime characteristic $p>0$, with dimension $d\geq 2$. Let $Z\ni \fram$ be a closed subscheme of $X:=\Spec(R)$ of codimension at least $2$ with complement $U$.
Then the \'{e}tale fundamental group of $U$, \ie $\pi_1:=\fg\bigl(U,\bar{x}\bigr)$, is finite. Furthermore, the order of $\pi_1$ is at most $1/s(R)$ and is prime to $p$. For example, $Z=\{\fram\}$ and $U=\Spec^{\circ}(R)$.
\end{theorem}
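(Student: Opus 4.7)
The plan is to leverage the Galois-categorical equivalence recalled in Section~2.4: finite connected \'etale covers of $U$ correspond to finite local normal extensions $(R,\fram,k) \subseteq (S,\fran,\ell)$ that are \'etale over $U$, which since $\codim_X Z \geq 2$ is exactly the condition of being \etInCdOne. Computing $\pi_1$ then reduces to controlling the directed system of such Galois extensions.

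\textbf{Step 1 (Uniform bound on generic rank).} Since $R$ is strictly Henselian, $k$ is separably closed, so Lemma~\ref{lem.SepResFields} forces $\ell = k$ for any such extension. Theorem~B (\autoref{thm.formula_signature}) then gives $s(S) = [L:K]\cdot s(R)$, and since $s(S) \leq 1$ we obtain $[L:K] \leq 1/s(R)$. This is exactly the content of \autoref{cor.BoundGenericRnk}.

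\textbf{Step 2 (A maximal extension exists).} Inside a fixed separable closure of $K = K(R)$, choose a finite local \etInCdOne{} extension $R \subseteq S_0$ whose generic rank $[K(S_0):K]$ is maximal (possible by Step~1). I claim $R \subseteq S_0$ is Galois and dominates every other such extension. For any second finite local \etInCdOne{} extension $R \subseteq S'$, form the normalization $T$ of $R$ in the compositum $K(S_0)\cdot K(S')$; because $R$ is strictly Henselian, $T$ is local, and the extension $R\subseteq T$ is again \etInCdOne{} (being \'etale wherever both $S_0$ and $S'$ are). The bound from Step~1 applied to $T$ and maximality of $S_0$ force $T = S_0$, hence $K(S') \subseteq K(S_0)$. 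Applying the same argument to Galois conjugates of $S_0$ shows $S_0/R$ is Galois.

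\textbf{Step 3 (Conclusion).} By the description of $\pi_1$ as the inverse limit of $\mathrm{Gal}\bigl(K(S)/K\bigr)$ over connected Galois $R\subseteq S$ in $\mathscr{G}$, Step~2 gives
\[
\pi_1 = \mathrm{Gal}\bigl(K(S_0)/K\bigr), \qquad |\pi_1| = [K(S_0):K] \leq 1/s(R).
\]
For the primality-to-$p$ assertion: every finite quotient of $\pi_1$ is the Galois group of some finite local \etInCdOne{} extension $R \subseteq S$, for which $\ell = k$; \autoref{cor.PurityOfBranchLocusForCovers} then rules out $p\mid [K(S):K]$. Hence $|\pi_1|$ is prime to $p$.

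\textbf{Main obstacle.} The subtle point is Step~2: one must verify that the compositum of two finite local \etInCdOne{} extensions of a strictly Henselian $R$ is again local and \etInCdOne{}, so that the bounded directed system has an actual maximum (and not merely a bounded supremum attained only in the limit). Strict Henselianness handles locality, and the \etInCdOne{} property is preserved since the compositum is \'etale wherever both factors are; these checks together with the a priori bound of Step~1 allow one to stabilize the inverse system at a single finite Galois extension, yielding the effective bound on $|\pi_1|$.
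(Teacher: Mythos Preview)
Your proof is correct and follows the same overall strategy as the paper---reduce to finite local \etInCdOne{} extensions via the Galois-categorical equivalence, bound their generic rank by $1/s(R)$, and exhibit a single Galois extension $S^{\star}$ that dominates all others, so that $\pi_1 = \mathrm{Gal}(K(S^{\star})/K)$---but your organization is a bit more direct than the paper's.

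The paper first argues that any increasing chain $R \subseteq S_1 \subseteq S_2 \subseteq \cdots$ of such extensions must stabilize, using \autoref{cor.F_Signature_goes_up_under_ramification} (each strictly ramified step at least doubles the $F$-signature) to rule out an infinite strictly increasing tower, and only afterward passes to a maximal element and checks (via an implicit compositum) that it dominates every other cover. You instead go straight to picking an extension of \emph{maximal generic rank}, which the uniform bound $[L:K] \leq 1/s(R)$ makes available immediately, and then verify domination via an explicit compositum argument; the doubling corollary is never invoked. This is a genuine, if minor, streamlining: the chain-stabilization step in the paper is logically redundant once one has the rank bound, and your compositum argument (normalize in $K(S_0)\cdot K(S')$, use strict Henselianness for locality, observe the branch locus is contained in the union of the two branch loci) makes the existence of the universal cover more transparent. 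The paper's approach has the mild advantage of making the Scholium (stabilization of arbitrary towers) an immediate byproduct, whereas in your argument that statement would need a separate one-line deduction from the existence of $S_0$.
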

\begin{proof}
First note that since $R$ is a normal domain, $U = X \setminus Z$ is always connected.
Next, notice that since the trace is surjective, all the coverings we consider are (cohomologically) tamely ramified, in particular; given we have taken the hypothesis $k=k^{\text{sep}}$, all the residue field extensions in this category are trivial, see \autoref{lem.SepResFields}.

In this way, to show that $\pi_1$ is finite one just has to show that a sequence of module-finite local inclusions
$$(R, \mathfrak{m}, k) \subseteq (S_1, \mathfrak{n}_1, k) \subseteq (S_2, \mathfrak{n}_2, k) \subseteq (S_3, \mathfrak{n}_3, k) \subseteq \cdots $$
in which the consecutive inclusions $(S_i, \mathfrak{n}_i, k) \subseteq (S_{i+1}, \mathfrak{n}_{i+1}, k)$ are \'{e}tale in codimension $1$, and the extensions $(R, \mathfrak{m}, k) \subseteq (S_i, \mathfrak{n}_i, k)$ are all Galois, stabilizes; by this we mean that $S_i = S_{i+1}$ for all $i\gg 1$. In fact, what we have is that all but finitely many consecutive inclusions are \'{e}tale everywhere, this is a direct consequence of the \autoref{cor.F_Signature_goes_up_under_ramification}: because if this is not the case, then the $F$-signature of the local rings $S_i$ will eventually be arbitrarily large as $i$ grows (since we started with $s(R) > 0$), but the $F$-signature of any ring is well-known to be at most $1$. Hence, eventually one has equalities since in this setting \'{e}tale-ness guarantees equality (the extension will be free because it is \etale, but at most $1=[\ell : k]$ free summand is allowed or possible because the extensions of residue fields are all trivial).

For the upper bound on the order of $\pi_1$, just noticed that in this case there will be a Galois extension $(R,\fram) \subseteq (S^\star, \fran^\star)$ representing $\sG$. This is going to dominate any other such extension. To get such $(S^\star, \fran^{\star})$, let $(S, \fran)$ be a maximal element in a chain as above (this corresponds to a maximal element a chain of \etale{} extensions over $U$).  If it does not dominate some other extension $(R, \fram) \subseteq (T, \mathfrak{o}) \subseteq K^{\textnormal{sep}}$ which is also \etale{} over $U$, then both $(S, \fram)$ and $(T, \mathfrak{o})$ can be dominated by a larger extension also \etale{} over $U$.  Thus we may take $S^{\star} = S$.

We now have $\pi_1 = \Aut_{\sG} S^{\star}=\text{Gal}(L^{\star}/ K)$, the second equality holds because of the Galois condition, thereby
\[
\# \pi_1 = \# \Aut_{\sG} S^{\star} = [L^{\star}:K]\leq \frac{1}{s(R)},
\]
the inequality at the end is just given by \autoref{cor.BoundGenericRnk}. By \autoref{cor.PurityOfBranchLocusForCovers}, it also follows the order of the group is prime to $p$.
\end{proof}
\begin{remark} We remark we need to assume our local ring expressing the singularity is strictly Henselian and not just Henselian since otherwise the associated fundamental group would contain $\text{Gal}(k^{\text{sep}}/k)$, which might easily be infinite, for instance; for perfect fields the separable closure coincides with the algebraic closure.  For example, this is infinite for $k= \mathbb{F}_p$. Nonetheless, under the hypothesis $\#\text{Gal}(k^{\text{sep}}/k)< \infty$ the same result would follow.
\end{remark}
\begin{remark}
As we have observed several times before, the surjectivity of the trace imposes strong tameness on the ramification, namely cohomological tameness. So that the \etale{} fundamental group we dealt with in \autoref{thm.FinitenessFundGroup} is actually the same as the/any tame fundamental group. In fact, as we noticed, the order of the group is prime to $p$.
\end{remark}

From the proof of \autoref{thm.FinitenessFundGroup} we get the following statement, which is a local and positive characteristic analog for \cite[Theorem 1.1]{GrebKebekusPeternellEtaleFundamental}.

\begin{scholium}
Let $(R, \mathfrak{m}, k)$ be a strongly $F$-regular local domain. In any chain
$$(R, \mathfrak{m}) \subseteq (S_1, \mathfrak{n}_1) \subseteq (S_2, \mathfrak{n}_2) \subseteq (S_3, \mathfrak{n}_3) \subseteq \cdots$$
of module-finite local \etInCdOne{} inclusions of normal local domains all but finitely many of the extensions are \etale{} everywhere.
\end{scholium}

The following corollary is the local positive characteristic analog of \cite[Theorem 1.5]{GrebKebekusPeternellEtaleFundamental}. However, in our local case it follows from a general result for singularities with finite fundamental group. So that it is not that interesting as in the global case.

\begin{corollary}
\label{cor.MaximalCover3}
Suppose that $(R, \fram)$ is a strictly local $F$-finite strongly $F$-regular domain.  Then inside any fixed separable closure of its fraction field $K$, there exists a unique largest finite local \etInCdOne{} extension $(R, \fram) \subseteq (S, \fran)$ of normal domains so that $(S, \fran)$ has no non-trivial finite local \etInCdOne{} extension $(S, \fran) \subsetneq (T, \mathfrak{o})$, i.e. with trivial fundamental group.
\end{corollary}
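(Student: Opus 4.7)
The plan is to derive this directly from the finiteness of $\pi_1 := \fg(\pSpec R)$ established in \autoref{thm.FinitenessFundGroup}, via the ring-theoretic dictionary (recalled in the preliminaries) between connected finite \'etale covers of $\pSpec R$ and finite local \etInCdOne{} extensions of $R$ sitting inside the fixed separable closure $K^{\textnormal{sep}}$.

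Indeed, the candidate $S^\star$ was essentially already produced in the last paragraph of the proof of \autoref{thm.FinitenessFundGroup}: since $\pi_1$ is finite, one exhibits a single finite local Galois \etInCdOne{} extension $(R, \fram) \subseteq (S^\star, \fran^\star) \subseteq K^{\textnormal{sep}}$ which dominates every other finite local \etInCdOne{} extension of $R$ inside $K^{\textnormal{sep}}$. Concretely, $S^\star$ can be taken to be the normalization of $R$ inside the fixed field of the kernel of the quotient $\mathrm{Gal}(K^{\textnormal{sep}}/K) \twoheadrightarrow \pi_1$; finiteness of $\pi_1$ ensures that this normalization is module-finite over $R$.

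From here the corollary is formal. If we had a non-trivial finite local \etInCdOne{} extension $(S^\star, \fran^\star) \subsetneq (T, \mathfrak{o})$ inside $K^{\textnormal{sep}}$, then by transitivity of \'etaleness in codimension one, the composite $(R, \fram) \subseteq (T, \mathfrak{o})$ would be a finite local \etInCdOne{} extension strictly larger than $S^\star$, contradicting the maximality of $S^\star$. Uniqueness of $S^\star$ inside $K^{\textnormal{sep}}$ follows by applying the same dominance property to a pair of candidates: each must contain the other.

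The only routine bookkeeping point --- rather than an obstacle --- is to observe that $S^\star$ is again strictly Henselian, so that the condition of having no non-trivial finite local \etInCdOne{} extension is the correct ring-theoretic translation of ``$\pi_1\bigl(\pSpec S^\star\bigr)$ is trivial.'' This is standard: finite local extensions of Henselian rings are Henselian, and the residue field extension $k \subseteq \ell$ is trivial (so separably closed remains separably closed) by \autoref{lem.SepResFields} applied to the strongly $F$-regular \etInCdOne{} extension $R \subseteq S^\star$.
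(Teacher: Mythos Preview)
Your proposal is correct and follows the same approach the paper indicates: the paper states (just before \autoref{cor.MaximalCover3}) that the corollary ``follows from a general result for singularities with finite fundamental group'' and gives no further proof, while the dominating extension $(S^\star,\fran^\star)$ you invoke is exactly the one constructed in the final paragraph of the proof of \autoref{thm.FinitenessFundGroup}. Your write-up simply makes explicit the routine bookkeeping (transitivity of \etInCdOne, strict Henselianity of $S^\star$ via \autoref{lem.SepResFields}) that the paper leaves to the reader.
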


Compare the following with \cite[Theorem 1.10]{GrebKebekusPeternellEtaleFundamental}.

\begin{corollary}
Suppose $(R, \fram) \subseteq (S, \fran)$ as in \autoref{cor.MaximalCover3} with $\pi : \Spec S \to \Spec R$ the induced morphism.  If $D$ is a $\bQ$-Cartier Weil divisor on $\Spec R$ such that $n$, the index of $D$, is not divisible by $p$, then $n \leq {1 / s(R)}$ and $n \;|\; [K(S) : K(R)]$.  Furthermore, $\pi^* D$ is Cartier on $\Spec S$.
\end{corollary}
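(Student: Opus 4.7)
The plan is to construct the index-$n$ cyclic cover $(R, \fram) \subseteq (T, \fran_T)$ of $R$ associated to $D$ and then to invoke the maximality from \autoref{cor.MaximalCover3} to dominate $T$ by $S$. Since $R$ is local and $nD$ is Cartier with $n$ minimal, write $nD = \Div(f)$ for some $f \in K(R)^{\times}$, and take $T$ to be the normalization of $R$ in $K(R)[t]/(t^n - f)$; equivalently, $T$ is the $R$-algebra $\bigoplus_{i=0}^{n-1} R(iD)$ with multiplication induced by the reflexive products $R(iD) \otimes_R R(jD) \to R\bigl((i+j)D\bigr)$ together with the trivialization $R(nD) \xrightarrow{\sim} R$ coming from $f$. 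The polynomial $t^n - f$ is separable since $p \nmid n$, and is irreducible over $K(R)$ by the minimality of $n$ together with Capelli's theorem (any factorization would yield $f = h^{\ell}$ for some prime $\ell \mid n$, leading to $(n/\ell)D = \Div(h)$ and contradicting minimality), so $K(T)/K(R)$ has degree $n$.

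Next I would verify that $T$ is a normal local domain \etInCdOne{} over $R$. The irreducibility above shows $T$ is a domain, and since $R$ is strictly Henselian and $T$ is finite over $R$, the ring $T$ must be local. For \etInCdOne, at any height-$1$ prime $P \subseteq R$ the DVR $R_P$ makes $D$ locally Cartier, say $D|_{\Spec R_P} = \Div(g)$; then $f = u g^n$ for a unit $u \in R_P^{\times}$, and the substitution $s = t/g$ identifies $T \otimes_R R_P$ with $R_P[s]/(s^n - u)$, which is \etale over $R_P$ since $u$ is a unit and $p \nmid n$. Moreover, by construction $n \cdot \Div_T(t) = n \cdot \pi_T^{\ast}D$ as Weil divisors, so $\Div_T(t) = \pi_T^{\ast}D$ by torsion-freeness of the divisor group, exhibiting $\pi_T^{\ast}D$ as principal and thus Cartier on $\Spec T$.

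To finish, fix a separable closure $K^{\mathrm{sep}}$ of $K(R)$ containing $K(S)$ and embed $K(T)$ into $K^{\mathrm{sep}}$. By \autoref{cor.MaximalCover3}, $T \subseteq S$ inside $K^{\mathrm{sep}}$, which gives a factorization $\pi = \pi_T \circ \mu$ with $\mu : \Spec S \to \Spec T$. Then $\pi^{\ast}D = \mu^{\ast}(\pi_T^{\ast}D)$ is the pullback of a Cartier divisor, and is therefore Cartier on $\Spec S$, while $n = [K(T):K(R)] \mid [K(S):K(R)]$ by multiplicativity of degrees. Finally, \autoref{cor.BoundGenericRnk} applied to the \etInCdOne{} extension $R \subseteq T$ between strictly Henselian normal local domains with common separably closed residue field yields $n \leq 1/s(R)$.

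The main obstacle is the careful verification that the cyclic cover $T$ is a local normal domain \etInCdOne{} over $R$: connectedness rests on the minimality of the index via Capelli, and \etInCdOne{} crucially uses $p \nmid n$ so that $s^n - u$ is separable at each height-one prime. Once these are in hand, the three desired conclusions follow immediately from \autoref{cor.MaximalCover3} and \autoref{cor.BoundGenericRnk}.
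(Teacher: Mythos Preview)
Your argument is correct and, for the construction of the cyclic cover $T$ and the deductions $n \leq 1/s(R)$ and $n \mid [K(S):K(R)]$, it follows the same route as the paper: build the index-$n$ cyclic cover, check it is a local normal domain \etale{} in codimension $1$, and then use the maximality of $S$ from \autoref{cor.MaximalCover3} together with \autoref{cor.BoundGenericRnk}. The paper simply cites \cite[Example 2.8]{WatanabeFRegularFPure} and \cite[Corollary 1.9]{TomariWatanabeNormalZrGradedRings} for these properties of $T$, whereas you verify them by hand via Capelli and a direct local computation at height-one primes. (Your appeal to Capelli should, strictly speaking, also rule out the exceptional case $4 \mid n$ and $f = -4h^4$; but this case is handled by the same index argument, since then $(n/4)D$ would be principal, and if $p=2$ it cannot occur because $n$ is odd.)

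The one genuinely different step is the proof that $\pi^* D$ is Cartier on $\Spec S$. The paper argues by contradiction: if $\pi^* D$ had index $m > 1$ (necessarily with $p \nmid m$, since $m \mid n$), one could form a nontrivial cyclic cover of $S$ that is \etale{} in codimension $1$, contradicting the defining property of $S$ in \autoref{cor.MaximalCover3}. You instead observe directly that $\pi_T^* D = \Div_T(t)$ is already Cartier on $\Spec T$, and then pull back along the factorization $\Spec S \to \Spec T$. Your route is more constructive and avoids a second cyclic-cover construction; the paper's route is shorter once the maximality of $S$ is in hand and makes no use of the explicit generator $t$. Both are perfectly valid.
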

\begin{proof}
If $D$ is $\bQ$-Cartier with index $n$, not divisible by $p$, then the cyclic cover $R \subseteq R \oplus R(-D) \oplus \dots \oplus R(-(n-1)D) = T$ is \etInCdOne{} by \cite[Example 2.8]{WatanabeFRegularFPure}.  Now, $T$ is a domain by \cite[Corollary 1.9]{TomariWatanabeNormalZrGradedRings} hence local since $R$ is Henselian.  The extension $R \subseteq T$ has generic rank $n$ and the first statements follows.  Note that $\pi^* D$ is still $\bQ$-Cartier but it must be Cartier since otherwise we could take a cyclic cover on $S$ which has no non-trivial \etInCdOne{} covers.
\end{proof}

By taking cones we also have the following.

\begin{corollary}
\label{cor.FundGroupOfGFR}
Suppose that $X$ is a projective globally $F$-regular variety over an algebraically closed field and that $Z \subseteq X$ is a closed subset of codimension $\geq 2$.  Let $U = X \setminus Z$.  Then $\pi_1:=\fg\bigl(U)$ is finite of order prime to $p$.
\end{corollary}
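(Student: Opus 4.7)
The plan is to reduce the global statement to the local \autoref{thm.FinitenessFundGroup} together with the degree bound of \autoref{cor.CoverOfGloballyFRegPair}, via the section-ring (cone) construction, following the pattern of Theorems D and C in the introduction.

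First I would fix an ample line bundle $\sL$ on $X$ and form the section ring $R = \bigoplus_{n \geq 0} H^0(X, \sL^n)$. Because $X$ is globally $F$-regular, $R$ is a strongly $F$-regular normal graded domain with $s(R) > 0$, and the affine cone $C := \Spec R$ has the single vertex $v$ corresponding to the irrelevant ideal $\fram = R_{>0}$.

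Next, I would bound the degree of an arbitrary finite \'etale cover $\til U \to U$. By Zariski's main theorem, as reviewed in Section 2, $\til U \to U$ extends uniquely to a finite normal cover $\til X \to X$ that is \'etale outside $Z$; since $\codim_X Z \geq 2$, this extension is \etInCdOne{}. Applying \autoref{cor.CoverOfGloballyFRegPair} with $\Delta = 0$ bounds the generic rank of $\til X \to X$, equivalently the degree of $\til U \to U$, by $1/s(R)$. Finiteness of $\pi_1(U)$ then follows from the general profinite-group principle that any profinite group all of whose finite continuous quotients have uniformly bounded order $\leq N$ is itself finite of order $\leq N$ (realized by the quotient of maximal order): every finite continuous quotient of $\pi_1(U)$ corresponds to a connected Galois \'etale cover of $U$, whose degree is bounded by $\lfloor 1/s(R) \rfloor$.

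Finally, for the prime-to-$p$ assertion I would pick a connected Galois \'etale cover $\til U \to U$ of degree $d$, extend to a connected $\til X \to X$, and form $\til R = \bigoplus_{n \geq 0} H^0\bigl(\til X, (\pi^* \sL)^n\bigr)$, a graded normal domain with $\til R_0 = k = \bar k$ and unique graded maximal ideal $\til \fram = \til R_{>0}$. Using the integral dependence relations of positive-degree generators of $\til \fram$ over $R$ — whose constant terms necessarily lie in $\fram$ by graded degree considerations — one sees that $\fram \til R$ is $\til \fram$-primary, so $\Spec \til R \to \Spec R$ has a unique point above $v$. Henselizing at the vertices therefore yields a finite local \etInCdOne{} extension $R^h \subseteq \til R^h$ with common residue field $k = \bar k$, and \autoref{cor.PurityOfBranchLocusForCovers} forces $d$ to be prime to $p$.

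The main obstacle is the fiber-over-vertex verification in the last step: namely, showing that the section ring of a connected Galois cover remains a graded domain whose vertex is the unique point above $v$, so that the global prime-to-$p$ question reduces to the local purity statement \autoref{cor.PurityOfBranchLocusForCovers}. The degree bound itself is essentially immediate from \autoref{cor.CoverOfGloballyFRegPair}.
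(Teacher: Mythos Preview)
Your proposal is correct and follows essentially the same approach as the paper: the paper's proof is a terse two lines citing \autoref{cor.CoverOfGloballyFRegPair} for the degree bound and prime-to-$p$ assertion (via the cone) and \autoref{thm.FinitenessFundGroup} for the finiteness mechanism, and you have simply unpacked those citations. Your profinite-group formulation of finiteness is equivalent to the paper's chain-stabilization argument, and your fiber-over-the-vertex check is exactly what is implicit in the paper's ``by completion (or Henselization)'' clause in the proof of \autoref{cor.CoverOfGloballyFRegPair}.
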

\begin{proof}
We can bound the degree of any finite \etale{} cover of $U$ as before in \autoref{cor.CoverOfGloballyFRegPair} and prove that its order is relatively prime to $p$.  The result then follows exactly as in \autoref{thm.FinitenessFundGroup}.
\end{proof}

\begin{remark}
In the case that $\dim X \leq 3$ and $p \geq 11$, if $X$ is globally $F$-regular it is also rationally chain connected by \cite{GongyoLiPatakfalviSchwedeTanakaZong}.  Hence by \cite{Chambert-LoirPointsRationnels}, if it is also smooth, the \etale{} fundamental group is finite of order prime to $p$ (see also \cite[Theorem 4.13]{KollarShafarevich} and \cite[Section 11.2]{GrebKebekusPeternellEtaleFundamental}).  On the other hand, \autoref{cor.FundGroupOfGFR} can also be thought of as evidence that (smooth) globally $F$-regular varieties are rationally chain connected in all dimensions.
\end{remark}

It is natural to ask whether the characteristic $p > 0$ results of this paper imply the characteristic zero results of \cite{XuFinitenessOfFundGroups} and some of the results of \cite{GrebKebekusPeternellEtaleFundamental}.  Unfortunately, we do not know how to reduce local algebraic fundamental groups to characteristic $p \gg 0$ by spreading out.  However, if we had a positive answer to the following question, a number of characteristic zero results would immediately follow.

\begin{question}
Suppose that $(R, \fram)$ is the local ring of a singularity in characteristic zero.  Consider a family of characteristic $p > 0$ reductions $(R_p, \fram_p)$.  Is\footnote{We write this imprecisely to make the idea clear.  If the way we wrote it bothers the reader, simply assume everything is defined over $\bZ$.  Otherwise the usual reduction to characteristic $p > 0$ accoutrements must be employed.}
\[
\limsup_{p \rightarrow \infty} s(R_p) > 0?
\]
\end{question}
Examples seem to suggest that this is the case.
Of course, this is only a special case of the following question which a number of people have already considered.
\begin{question}
Does $ \lim_{p \rightarrow \infty} s(R_p)$ have a geometric interpretation, or at least some geometric lower bounds?
\end{question}
Indeed, answers to this question would yield effective versions of many of the results of \cite{XuFinitenessOfFundGroups} and \cite{GrebKebekusPeternellEtaleFundamental}.




\bibliographystyle{skalpha}
\bibliography{MainBib}
\end{document}